\documentclass[12pt, a4paper, reqno]{amsart}

\usepackage[utf8]{inputenc}
\usepackage{color,fullpage}
\usepackage{mathrsfs}
\usepackage{graphicx}
\usepackage{faktor}
\usepackage{amssymb, multirow}
\usepackage{amsmath, float, fbb}
\usepackage{amsthm, csquotes, microtype, fullpage, textcomp, wrapfig}
\usepackage[colorlinks=true, linkcolor=blue, citecolor=blue, urlcolor=blue, breaklinks=true]{hyperref}
\usepackage[capitalise]{cleveref}
\usepackage[all,cmtip]{xy}
\usepackage{float}
\usepackage{tikz}
\usepackage{cite}
\usepackage{caption} 
\usetikzlibrary{arrows,shapes,automata,backgrounds,petri,decorations}
\usetikzlibrary{automata} 
\usetikzlibrary[automata] 
\usepackage{graphicx} 
\usetikzlibrary{external,automata,trees,positioning,shadows,arrows,shapes.geometric}
\usepackage{tikz-cd}
\usetikzlibrary{decorations.pathreplacing,decorations.markings}
\tikzset{close/.style={near start,outer sep=-2pt}} 
\tikzset{
  on each segment/.style={
    decorate,
    decoration={
      show path construction,
      moveto code={},
      lineto code={
        \path [#1]
        (\tikzinputsegmentfirst) -- (\tikzinputsegmentlast);
      },
      curveto code={
        \path [#1] (\tikzinputsegmentfirst)
        .. controls
        (\tikzinputsegmentsupporta) and (\tikzinputsegmentsupportb)
        ..
        (\tikzinputsegmentlast);
      },
      closepath code={
        \path [#1]
        (\tikzinputsegmentfirst) -- (\tikzinputsegmentlast);
      },
    },
  },
  mid arrow/.style={postaction={decorate,decoration={
        markings,
        mark=at position .5 with {\arrow[#1]{stealth}}
      }}},
}

\usepackage{mathtools}
\usepackage{hyperref,url}
\captionsetup[table]{skip=10pt}


\newtheorem{thm}{Theorem}[section]
\newtheorem{cor}[thm]{Corollary}
\newtheorem{lem}[thm]{Lemma}
\newtheorem{prop}[thm]{Proposition}

\newtheorem{conj}[thm]{Conjecture}
\theoremstyle{definition}
\newtheorem{defn}[thm]{Definition}

\theoremstyle{definition}

\newtheorem{expi}[thm]{Experiment}

\numberwithin{equation}{section}

\newcommand{\Q}{\mathbb{Q}}

\newcommand{\Z}{\mathbb{Z}}
\newcommand{\F}{\mathbb{F}}

\newcommand{\Sel}{\operatorname{Sel}}

\DeclareFontFamily{U}{wncy}{}
    \DeclareFontShape{U}{wncy}{m}{n}{<->wncyr10}{}
    \DeclareSymbolFont{mcy}{U}{wncy}{m}{n}
    \DeclareMathSymbol{\Sh}{\mathord}{mcy}{"58}

\begin{document}

\title{Unveiling Arithmetic Statistics of Congruent Number Elliptic Curves via Data Science and Machine Learning}

\author{Priyavrat Deshpande}
\address{Chennai Mathematical Institute, India.}
\email{pdeshpande@cmi.ac.in}

\author{Aditya Karnataki}
\address{Chennai Mathematical Institute, India.} \email{adityack@cmi.ac.in}

\author{Pratiksha Shingavekar}
\address{Chennai Mathematical Institute, India.}
\email{pshingavekar@gmail.com}
\thanks{The authors are partially supported by a grant from the Infosys Foundation}

\begin{abstract}
This article presents a comprehensive data-scientific investigation into the arithmetic statistics of congruent number elliptic curves, leveraging a dataset of square-free integers up to $3$ million. 
We analyze the Mordell-Weil ranks, 2-Selmer ranks, and 3-Selmer ranks of the corresponding elliptic curves $E_D: y^2 = x^3 - D^2x$, where $D$ is a square-free number. 
Our study empirically examines the Heath-Brown heuristics, which predict the distribution of $2$-Selmer ranks as well as congruent numbers based on their residue modulo $8$. 
In particular, offering statistical insights into the proportion of numbers whose associated elliptic curves have positive rank. 
We provide a rigorous verification of Goldfeld's Conjecture in this context, analyzing the distribution of analytic ranks and demonstrating their alignment with the conjectured $50/50$ split for ranks $0$ and $1$. 
Furthermore, we explore the conjectural asymptotic distribution of $2-$ and $3$-torsion part of the Tate-Shafarevich group of these curves. 
Based on empirical evidence, we also suggest potential statistical distribution of $3$-Selmer and Mordell-Weil ranks. 
We also examine the averages of Frobenius traces and observe that they tend to zero without exhibiting any murmuration-like patterns. 

In addition to these number-theoretic analyses, we apply machine learning techniques to classify and predict congruent numbers, exploring the efficacy of computational methods in distinguishing congruent from non-congruent numbers based on the arithmetic properties of elliptic curves. 
This interdisciplinary approach blends advanced number theory with modern data science, providing empirical support for conjectures as well as discovery of new patterns. 
\end{abstract}

\keywords{Elliptic curves, congruent numbers, Heath-Brown heuristics, Goldfeld's conjecture, arithmetic statistics, data analysis, machine learning}
\subjclass[2020]{11G05, 11Y35, 68T07}
\maketitle

\section{Introduction}
The study of elliptic curves is at the forefront of modern mathematics, bridging diverse areas such as algebra, geometry, number theory, complex analysis and cryptography.  
A classical and particularly intriguing problem within this field is the congruent number problem, which asks for which positive integers $D$ can be the area of a right-angled triangle with all three sides of rational length. 
This seemingly elementary question, dating back to ancient times, finds its elegant modern formulation in the theory of elliptic curves: a positive integer $D$ is congruent if and only if the associated elliptic curve, given by the equation $y^2 = x^3 - D^2x$ has a rational point of infinite order, equivalently its Mordell-Weil (MW) rank is nonzero. 

Understanding the distribution of these ranks across families of elliptic curves is a central challenge and the congruent number family is no exception. 
However, two profound works guide much of the research related to ranks of congruent number curves. 
The first is Heath-Brown's heuristics, which offer specific predictions for the density of congruent numbers based on their residue modulo 8. 
However, directly computing the MW rank of an elliptic curve is computationally intensive and often intractable for large numbers (in the congruent number context). 
Consequently, Heath-Brown's analysis relied on $2$-descents, particularly the $2$-Selmer groups, whose dimension (as an $\mathbb{F}_2$-vector space) provides a computable upper bound for the MW rank.
In particular, for $D\equiv 1, 2, 3 \pmod 8$, the average dimension of the $2$-Selmer group tends to be even. 
Combined with the parity conjecture it means that the MW rank in this case is also even; often $0$.
This means that such numbers are often not congruent. 
On the other hand for $D\equiv 5, 6, 7 \pmod 8$ the $2$-Selmer dimension is odd, implying, via the parity conjecture, that such numbers are expected to be congruent, with MW rank $1$ or higher. 
Further, Heath-Brown's work provides a unified theoretical framework to explain the statistical distributions of $2$-Selmer groups by making precise predictions about moments of sizes of these groups. For example, he proved that the average size of $2$-Selmer groups is $3$. 

The second is the Goldfeld's Conjecture, which, in its general form, posits a remarkable statistical regularity for the \emph{analytic ranks} of quadratic twists of a fixed elliptic curve: it predicts that approximately $50\%$ of such twists will have rank $0$ and, $50\%$ will have rank 1, and a vanishingly small proportion will have rank 2 or higher.
Goldfeld's conjecture is applicable in this context, since congruent number curves are quadratic twists of $y^2 = x^3 - x$.

This article presents a comprehensive data-scientific study of congruent number elliptic curves, utilizing a substantial dataset comprising square-free integers up to $3$ million. 
For each such $D$, we have collected data on its $2$-Selmer rank and the MW rank. 
Moreover, we have also computed, $3$-Selmer rank, analytic rank, Frobenius traces, regulator, real period and other related numerical parameters for a lot of the curves corresponding to square-free numbers up to $1$-million. 
Our empirical approach allows for a direct statistical confrontation with theoretical predictions. 
Specifically, we undertake several key experiments:

\begin{enumerate}
    \item An empirical examination of Heath-Brown's heuristics \cite{hb, hb1}, analyzing the observed distribution of $2$-Selmer ranks across different residue classes modulo 8.
    \item Empirical verification of Delaunay's heuristics \cite{del, del1} regarding torsion part of the Tate-Shafarevich groups. 
    \item A detailed verification of Goldfeld's Conjecture \cite{gold} within the context of congruent number elliptic curves, focusing on the observed distribution of analytic ranks.
    \item A detailed analysis of the recent work of Alex Smith \cite{smith2016congruent} regarding the density of congruent numbers and the validity of Birch and Swinerton-Dyer conjecture in the context of this family of elliptic curves. 
    \item Motivated by the murmurations phenomenon \cite{mumur01} we also experiment with average of Frobenius traces taken over congruent number curves. 
    \item An exploration of the distribution of $3$-Selmer ranks and MW ranks of congruent number elliptic curves, providing novel insights into their statistical behavior. 
    \item The application of machine learning techniques to classify and predict congruent numbers, demonstrating the utility of computational intelligence in number theory.
\end{enumerate}

We present several of our principal findings. A significant result by Heath-Brown involves providing a theoretical estimate of the moments of $2$-Selmer sizes. 
He demonstrates that these moments are independent of the residue class of $D$ modulo $8$, with an associated error term involving powers of $\log X$ and $\log(\log X))$ (\Cref{hb2thm1}). 
Our empirical verification suggests that, for numbers of the form $1, 3 \pmod{8}$, the observed moments deviate from the theoretical predictions; however, for the remaining two residue classes, the observed moments closely correspond with the predicted values. 
Notably, for $1\pmod{8}$ the empirical values exceed, whereas for $3\pmod{8}$, they fall short (\Cref{momentslog}).

Additionally, we observe that in the case of the average size of the $2$-Selmer group, the error term is mostly precise, meaning that the magnitude is accurate for $1, 3\pmod{8}$ residue classes, but deviates for the other two residue classes. 
For the $2$nd moment, the bound on the error term does not appear as precise, indicating potential improvements in theoretical estimations may be feasible (\Cref{loglogterm}).

Subsequent experimentation aimed at determining the average value of Frobenius traces yielded intriguing results. 
Specifically, for a fixed prime, the average Frobenius trace value across all elliptic curves in our dataset tends towards zero, resulting in an outcome (\Cref{frobavg}) not, to our knowledge, discussed in existing literature. 
Motivated by this, we extended our research to cubic and quartic twists, observing a consistent phenomenon: for a fixed prime, the average Frobenius trace tends towards zero. 
Consequently, we propose a conjecture that this behavior is applicable to any twist family (\Cref{openq}).

Another notable observation in our work pertains to the empirical behavior of $3$-Selmer groups. 
Firstly, the parity distribution mirrors that of $2$-Selmer's. 
Secondly, the average size remains largely constant across all residue classes, aligning closely with the predicted average of $4$ for all elliptic curves. 
Thirdly, the empirical probability distribution of ranks aligns closely with the conjecture by Poonen-Rains. 
Furthermore, these phenomena become apparent without the necessity of extensive datasets.

This work is an emerging trend in modern mathematics: the application of data science and computational techniques to explore and generate hypotheses in traditionally theoretical domains like number theory. 
In a nutshell, the art of mathematical data science can be described as follows: one starts by considering certain mathematical objects collectively to generate datasets.  
The next step involves applying machine learning tools to find statistical structures in the datasets. 
The final step consists of interpreting these results to understand the mathematical objects better by means of finding newer structures, conjectures and theorems. 
This interdisciplinary synergy promises to accelerate discovery, providing empirical evidence to complement and inform theoretical advancements. 
We should note that, these processes are not entirely new to mathematics, in fact, the formulation of Birch and Swinerton-Dyer conjecture is credited to computer generation of elliptic curves and further analysis of the resulting data. 
Past couple of years have seen increasing interest in using large datasets, statistical analysis, and machine learning to uncover patterns, test conjectures, and even guide proofs in areas such as the distribution of prime numbers, properties of $L$-functions, arithmetic of elliptic curves, low-dimensional topology, algebraic combinatorics and algebraic geometry to name a few.
We refer the interested reader to two excellent surveys \cite{davies2021advancing, douglee25} which summarize important developments in this field. 
The following two papers warrant a special mention since they are relevant to our work: in the paper \emph{Murmurations of elliptic curves} \cite{mumur01}, the authors, among other things, find a murmuration-like pattern in the averages of Frobenius traces. 
This is an example of a purely data-driven insight. 
The authors also train various machine learning models to predict the MW ranks. 
We should also mention \cite{shaorder}, where authors train neural networks, in addition to ML models, to predict the order of (analytic) Tate-Shafarevich groups. 

The remainder of this paper is organized as follows: Section \ref{prelims} starts with number theoretic preliminaries and also provides details of dataset generation and methodologies employed. 
\Cref{seldist} is about experimental analysis of $2$-Selmer groups of congruent number curves. 
In \Cref{heath-brown}, we first empirically analyze Heath-Brown's work. 
We recall each of his major theoretical heuristics results and then present our experimental findings. 
This is followed by an analysis of more general results of Poonen-Rains. 
Later in \Cref{delaunay} we crosscheck Delaunay's heuristics for $2-$ and $3$-Selmer groups. 
\Cref{gold-smith} discusses the empirical validation of Goldfeld's conjecture and the BSD conjecture in the context of congruent number curves. 
Next, in \Cref{sec:murmurations}, we move on to analyzing averages of Frobenius traces, along the lines of murmurations of elliptic curves.
\Cref{sec:mlexp} describes the machine learning experiments for congruent number classification and prediction. 
Finally, \Cref{conclude} outlines directions for future research based on new empirical patterns discovered in the context of $3$-Selmer groups, MW ranks and a visualization of parameters in the BSD formula. 

\section{Preliminaries}\label{prelims}
\subsection{The congruent number problem}\label{introCNP}
A positive integer $D$ is called a \emph{congruent number} if $D$ is the area of a right triangle with rational legs. 
Determining whether a given $D$ is congruent is known as the \emph{congruent number problem}. 
It suffices to consider only the square-free values $D$. 
Fibonacci proved that $5$ and $6$ are congruent, Euler showed that $7$ is a congruent number, while Fermat showed that $1$, $2$, $3$ are not. 
Although, Tunnell's criterion \cite{tunnell}, assuming the Birch and Swinnerton-Dyer (BSD) conjecture, gives a complete solution to the problem, an unconditional solution is still not known. 
The following results, among many other special cases, are known in this regard.
\begin{itemize}
    \item If $D=\ell \equiv 3 \pmod 8$ is a prime, then $D$ is non-congruent.
    \item If $D=\ell_1\ell_2$ with $(\ell_1,\ell_2) \equiv (1,3) \pmod 8$ and $\left(\frac{\ell_1}{\ell_2}\right) =-1$, then $D$ is non-congruent.
\end{itemize}

 We will later see, following the discussion in \Cref{introcurves}, that assuming the BSD conjecture, $D$ is a congruent number if $D \equiv 5,6,7 \pmod 8$. 

\subsection{The congruent number elliptic curves} \label{introcurves}
For a given rational number $D \ne 0$, let $E_D$ be the elliptic curve $E_D:= y^2=x^3-D^2 x$ defined over $\Q$. 
Then $E_D$ is a quadratic twist of $E_1:y^2=x^3-x$ by $D$. 
Since $E_D$ is isomorphic to $E_{c^2D}$ for any rational $c \ne 0$, we can assume that $D>1$ is a square-free integer. Further, via the change of coordinates $(x,y) \mapsto (-x,y)$, one can see that $E_D$ is isomorphic to $E_{-D}$ over $\Q$. 
Hence, one can further restrict one's attention to the cases $D >0$. It is a well-established fact that $D$ is congruent if and only if the rank $r(D)$ of the elliptic curve $E_D$ over $\Q$ is positive. 
We will now state some known facts from \cite[Ch. 10]{sil}. 
First, the torsion subgroup $E_D(\Q)_{tors}$ and the rank $r(D)$ of $E_D(\Q)$ satisfy 
\[E_D(\Q)_{tors} \cong \Z/2\Z \times \Z/2\Z \ \text{ and } \ r(D) \le 2 \nu(2D)-1,\] 
where $\nu(2D)$ is the number of distinct prime factors of $2D$.

Let $p$ be a prime not dividing $2D$ and $\#E_D(\F_p)$ be the number of solutions to $y^2 \equiv x^3-D^2x \pmod p$. Then we know that $\#E_D(\F_p) = p+1-a_p$ for some integer $a_p=a_p(E_D)$, called the Frobenius trace of $E_D$ modulo $p$. The $L$-function associated to $E_D$ is defined by

\[L_D(s):=\prod_{p \nmid 2D} (1-a_pp^{-s}+p^{1-2s})^{-1}.\]

Then $L_D(s)$ has an analytic continuation to the entire complex plane. 
The BSD conjecture then tells us that the order of vanishing $\text{ord}_{s=1} L_D(s)$, the so-called analytic rank $r_{an}(D)$ of $E_D$, is equal to $r(D)$. 
Due to the works of Gross-Zagier, Coates-Wiles and Rubin, we know that $r(D)=r_{an}(D)$ for the curves $E_D$ whenever $r_{an}(D)\le 1$. 
However, very little is known when $r_{an}(D) \ge 2$.  There is a functional equation relating $L_D(s)$ and $L_D(2-s)$ via a change of sign $\omega_D = \pm 1$, known as the root number of $E_D/\Q$. Using which, one can deduce that $(-1)^{r_{an}(D)} = \omega_D$. Birch and Stephens \cite{bs} showed that $\omega_D = +1$ when $D \equiv 1,2,3 \pmod 8$, while $\omega_D = -1$ when $D \equiv 5,6,7 \pmod 8$. In particular, $r_{an}(D)$ is positive for $D \equiv 5,6,7 \pmod 8$, and in these cases, $r(D)$ is positive if we assume the BSD conjecture be true.

Given a $D$, the conductor $N_D$ of $E_D$ is $16D^2$ if $D$ is even and $32D^2$ otherwise. We know that there exists a surjective morphism $\phi_D:X_0(N_D) \to E_D$ defined over $\Q$. The degree of $\phi_D$ is called the modular degree of $E_D$, denoted $m_D$. A conjecture by Watkins then predicts that $2^{r(D)} \mid m_D$. Thus, in particular, if $m_D$ is odd, then one can expect $r(D)$ to be $0$.

 Let $E$ be an elliptic curve over $\Q$ and $n$ be an integer $\ge 2$. For a prime $\omega \le \infty$ of $\Q$, denote by $\Q_\omega$ the completion of $\Q$ at $\omega$. Further, for a field $F \in \{\Q, \Q_\omega\}$, take $G_F$ to denote the absolute Galois group of $F$ and $\delta_{F}:E(F)/nE(F) \to H^1(G_F, E[n])$ to be the Kummer map. Taking the cohomology of
 \[0 \to E[n] \to E \to E \to 0,\]
 we get the following commutative diagram:
 \begin{center}\begin{tikzcd}
 0 \arrow[r] & {E}(\Q)/nE(\Q) \arrow[r, "{\delta}"] \arrow[d]
& H^1(G_\Q, E[n]) \arrow[d, "\underset{\omega}{\prod} {\rm res}_\omega"] \arrow[r] & H^1(G_\Q,E)[n] \arrow[d] \arrow[r] & 0\\
 0 \arrow[r] & \underset{\omega}{\prod} {E}(\Q_\omega)/nE(\Q_\omega) \arrow[r, "\underset{\omega}{\prod} {\delta}_{\omega}"] & \underset{\omega}{\prod} H^1(G_{\Q_\omega},E[n]) \arrow[r] & \underset{\omega}{\prod} H^1(G_{\Q_\omega},E)[n] \arrow[r] & 0.
\end{tikzcd}\end{center} 

\begin{defn}\label{mainsel}
The $n$-Selmer group of $E$ over $\Q$, denoted $\mathrm{Sel}_n(E/\Q)$, is defined as
 $$\mathrm{Sel}_n(E/\Q)= \{\xi \in H^1(G_\Q,E[n]) \mid {\rm res}_\omega(\xi) \in \mathrm{Im} \ \delta_{\omega} \text{ for every } \omega\}.$$ 
\end{defn}

Setting $\Sh(E/\Q):=\text{Ker}\big( H^1(G_\Q,E) \to \underset{\omega}{\prod} H^1(G_{\Q_\omega},E) \big)$ to be the Tate-Shafarevich group of $E$ over $\Q$,  we get the fundamental exact sequence of $\Z/n\Z$-modules:
\begin{equation}\label{eq:selshaseq}
    0 \to E(\Q)/nE(\Q) \to \mathrm{Sel}_n(E/\Q) \to \Sh(E/\Q)[n] \to 0.
\end{equation}
To ease the notation, we will denote by $S_2(D)$ the $2$-Selmer group of $E_D/\Q$. 
Then the presence of full rational $2$-torsion on $E_D$ accounts for the fact that $S_2(D)$ must be of size at least $4$. 
We will write $\#S_2(D) = 2^{2+s(D)}$. For $n=2$, using the above exact sequence of the $\F_2$-modules, we get that dimension over $\F_2$ of $S_2(D)$ gives a natural upper bound on $r(D)$, namely $r(D) \le s(D)$. Therefore, one has
\[r(D) \le s(D) \le 2 \nu(2D).\]

\subsection{Generation of the datasets} 
We now explain how our datasets were generated and give a brief overview of our experiment strategies. 
The \emph{$L$-functions and modular forms database} \cite{lmfdb}, currently has complete data for elliptic curves up to the conductor size $500,000$. 
As a result, there are only $87$ congruent number elliptic curves listed in LMFDB, so, we could not rely on their main database. 
However, one of their auxiliary databases contains some information about congruent number curves up to $1,000,000$. 
This data was created by Randall L. Rathbun and for each $n$, it contains conductor, regulator, analytic order of Sha, root number, generators of the MW group, the MW rank and a few other parameters. 
Since, our experiments only required square-free numbers, we trimmed the original dataset accordingly and created our first dataset containing all the $607926$ square-free numbers less than $1$ million and their conductor, regulator, MW rank, and the analytic order of Sha. 
The reader should note that, for brevity, in the article sometimes we do not distinguish between a square-free number and the corresponding elliptic curve. 

We then expanded our database by calculating the $2$-Selmer rank of all the congruent number curves corresponding to square-free numbers less than $3$ million; their number is $1823773$. 
The $2$-Selmer ranks were calculated using Sage Math's \texttt{selmer\_rank()} function \cite[Elliptic curves over rational numbers]{sagemath}. 
These values are unconditional since the calculations do not assume any conjectures.

In order to calculate the MW rank, we used the following strategy: for each $n$ greater than $1$ million we used the \texttt{RankBounds} function of Magma to determine lower and upper bounds of the rank, see the documentation for \cite{magma}, in the section titled \emph{Mordell-Weil Groups and Descent Methods}.
As the numbers grow large neither Magma's \texttt{Rank} function nor Sage's \texttt{rank()} function are useful. 
This is because Magma's \texttt{rank()} function usually returns the lower bound if it fails, while Sage Math's rank function raises a runtime error exception. 
On the other hand, the \texttt{RankBounds} function is the only available tool that implements descents over $\mathbb{Q}$ up to $9$ and always returns values. 
For curves corresponding to smaller numbers (i.e., less than $1$ million) these bounds are unconditional; no conjectures are assumed. 
However, as we go beyond $1$ million it takes at least a couple hours, sometimes even more, to process just one curve. 
To speed up the calculation Magma recommends assuming GRH. 
Hence, we also consult $2$-Selmer rank: if it is $0$ then the MW rank is also zero.
On the other hand, if it is $1$ then the MW rank is also $1$; this is conditional on BSD which implies that the dimension of Sha $2$-torsion is even. 
We also consult the list of congruent numbers up to $10^7$ (available at \url{https://www.numbersaplenty.com/set/congruent_number/}); the numbers that do not appear in that list are not congruent and the MW rank of the corresponding curves is set to $0$. 
The congruent numbers that are $1,2, 3\pmod{8}$ such that the associated curves have $2$-Selmer rank $2$ also have MW rank $2$. 

This way we are left with numbers whose curves have $2$-Selmer rank at least $3$. 
In case of square-free numbers up to $3$ million, such numbers are around $90,000$. 
We calculate \texttt{RankBounds} for these curves. 
If the bounds match then it is the MW rank of the curve. 
Next, if the bounds do not match, then we compute the root number and use the parity conjecture to decide the rank. 
Even if that fails (i.e., both bounds are of the same parity) then we call Sage Math's \texttt{analytic\_rank\_upper\_bound()} function, which does rely on GRH, but always returns a value. 
This upper bound is then used to decide the MW rank. 
This strategy is similar to the one described in more details in \cite[Section 2.2]{bhkssw}, however, the explicit Magma techniques described there are now part of the \texttt{RankBounds} function. 
It is important for the reader to recognize that with an increase in the value of $D$, the computational time required to ascertain rank bounds correspondingly rises. As of now, the determination of these unknown MW ranks remains incomplete. Nonetheless, their quantity constitutes less than $1\%$ of the total number of curves under consideration, and thus they are unlikely to produce significant distortions in the results. In instances where MW ranks have not been resolved, we have provisionally recorded $-1$ as the MW rank within our data files. These files will be subject to periodic revisions as our computations undergo further advancement.

In order to compute the analytic rank of these curves we use Sage Math's \texttt{analytic\_rank()} function by allowing it implement all the algorithms like \emph{PARI, sympow, rubinstein, magma, zero\_sum}. 
If in case the numbers don't match then we either use the root number or the \texttt{analytic\_rank\_upper\_bound} function to decide the analytic rank. 

We also compute the $3$-Selmer rank using Magma's \texttt{ThreeSelmerGroup} function. 
In many instances, class group and unit group computations proved expensive; in such cases Magma required use of \texttt{SetClassGroupBounds("GRH")}. 
Hence our $3$-Selmer ranks are conditional on GRH. 
The numerical parameters like regulator, special value, Tamagawa product etc. that appear in the BSD formula were calculated using Sage Math. 
We also compute Frobenius traces for the first $1000$ primes using Sage Math's \texttt{aplist} function.

Most of our data generation, analysis and ML experiments were performed on a machine with Intel Xeon CPU E5-2630 0 running at 2.30GHz with $15$ cores. 

\subsection{Availability of the datasets}
The Python scripts we developed to carry out the experiments are available for download from the GitHub repository located at 

\begin{center}
{\href{https://github.com/pratiksha-skar/CN_curves}{https://github.com/pratiksha-skar/CN\_curves}}   
\end{center}

The generated datasets are available for download on Zenodo. 
The files are either in the csv format or the parquet format. 
For example, the file \texttt{three\_mil\_2selmer.parquet} contains $2$-Selmer ranks of congruent number curves corresponding to square-free numbers up to $3$ million. 
The curve $E_D$ is identified by the value $D$ and then there is another column which has the $2$-Selmer ranks. 
The Zenodo link is provided in our GitHub repository. 
The interested reader should be able to reproduce our results.

\section{Distribution of Selmer groups}\label{seldist}

\subsection{Heath-Brown heuristics}\label{heath-brown}
In this section, we recall the main results of \cite{hb} and \cite{hb1}. Given $D>0$, recall that $s(D) = \dim_{\F_2} S_2(D) - 2$. 
Before we begin the data scientific study of Heath-Brown heuristics, let us first look at the frequency distribution of the Selmer ranks. 
As stated earlier we computed $2$-Selmer ranks of $1,823,773$ congruent number curves. 
These curves correspond, precisely, to all the square-free numbers up to $3$ million. 
The ranks encountered ranged from $0$ to $6$; \Cref{sel_freq} shows their frequency distribution. 
As can be seen, nearly $800000$ curves, which constitutes roughly $44\%$, are of rank $1$. 
Whereas rank $0$ and rank $2$ curves are roughly $22\%$ each. 
Remaining are the higher rank curves, out of which rank $5$ and $6$ curves are negligible. 

\begin{figure}[ht]
  \centering
  \includegraphics[width=0.8\textwidth,clip]{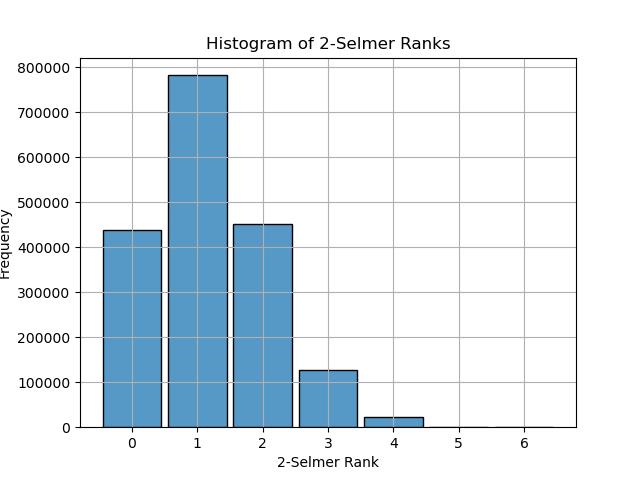}
  \caption{Frequency distribution of Selmer ranks}
  \label{sel_freq}
\end{figure}
We now look at the parity distribution of Selmer ranks across residue classes modulo $8$. The following statement first appeared in \cite{hb} and a proof by Monsky appeared in \cite[Appendix]{hb1}. 
\begin{thm}\label{selparity}
    If $D$ is positive and square-free, then $s(D)$ is even for $D \equiv 1, 2 \text{ or } 3 \pmod 8$, and odd for $D \equiv 5, 6 \text{ or } 7 \pmod 8$.
\end{thm}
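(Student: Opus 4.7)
The plan is to perform a complete 2-descent on $E_D$, realize $S_2(D)$ as the kernel of an explicit matrix over $\F_2$ using the full rational $2$-torsion, and then extract a parity invariant controlled by $D \pmod 8$. Because the $2$-torsion points of $E_D$ have $x$-coordinates $0, D, -D \in \Q$, the module $E_D[2]$ is a trivial Galois module, and the standard Kummer identification presents $S_2(D)$ as an $\F_2$-subspace of $V \oplus V$, where $V$ is the $\F_2$-vector space generated by $-1$ and by the primes dividing $2D$. Concretely, a pair $(d_1, d_2) \in V \oplus V$ lies in $S_2(D)$ iff the associated pair of conics parametrizing a $2$-covering of $E_D$ has a point over every completion of $\Q$, and each such local solvability condition is the vanishing of a Hilbert symbol that is linear in the exponent coordinates of $(d_1, d_2)$. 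Assembling all of them yields a matrix $M_D$ over $\F_2$ with $S_2(D) = \ker M_D$, so
$$s(D) \equiv \dim_{\F_2} \ker M_D \pmod 2.$$

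The heart of the argument is then to extract $\dim_{\F_2} \ker M_D \pmod 2$ from the structural constraints on $M_D$. Using quadratic reciprocity to relate the entries of $M_D$ coming from different odd primes dividing $D$, and the Hilbert product formula $\prod_v (a,b)_v = 1$ to produce a codimension-one linear relation among its rows, one arrives (after a careful choice of basis of $V \oplus V$) at a parity invariant: an explicit sum of Legendre symbols of the form $\left(\frac{-1}{p}\right)$, $\left(\frac{2}{p}\right)$, $\left(\frac{p}{q}\right)$ over primes $p, q$ dividing $2D$, congruent to $\dim_{\F_2} \ker M_D$ modulo $2$.

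Each such Legendre symbol depends only on its arguments modulo $8$, so the parity invariant becomes a function purely of the parity of $D$ together with the multiset of residues of the odd prime factors of $D$ modulo $8$. A direct case analysis, splitting on whether $D$ is even or odd and sorting the odd prime factors of $D$ by residue class modulo $8$, identifies this function and verifies that it vanishes exactly for $D \equiv 1, 2, 3 \pmod 8$ and equals $1$ for $D \equiv 5, 6, 7 \pmod 8$. The hard step will be the middle one: the extraction of the parity invariant from $M_D$ is delicate, because the clean fact that alternating forms over $\F_2$ have even rank is unavailable in this setting, and one must exploit the particular shape of the reciprocity relations together with careful bookkeeping at the awkward places $2$ and $\infty$, which do not fit into the reciprocity framework for odd primes and must be treated by hand.
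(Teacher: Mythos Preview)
The paper does not supply its own proof; it simply cites Monsky's appendix to \cite{hb1}. Your plan is exactly Monsky's strategy there: realize $S_2(D)$ as the kernel of an explicit $\F_2$-matrix of Legendre-symbol entries via complete $2$-descent on a curve with full rational $2$-torsion, and then read off the parity of the kernel dimension from the reciprocity relations among the entries together with the row relations forced by the product formula.

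One correction to your expectations in the last paragraph: the ``alternating form has even rank'' mechanism is not actually missing. Quadratic reciprocity in additive form says $[p_i/p_j] + [p_j/p_i] = \tfrac{p_i-1}{2}\cdot\tfrac{p_j-1}{2}$ over $\F_2$, so the descent matrix $A$ satisfies $A + A^T = \text{diagonal} + \text{rank one}$; this controlled symmetry defect is precisely what Monsky exploits to force the parity, playing the same structural role that alternation would. The residue of $D$ modulo $8$ then enters only through the local contributions at $2$ and $\infty$, and the endgame is a short computation rather than the extensive case split you anticipate.
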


The first column of \Cref{paritytable} has residue classes of $D$ modulo $8$, the second column has number of $D$'s in each residue class. The third and fourth column has the number of curves (in that residue class) whose $2$-Selmer groups have parity odd or even respectively. 

\begin{table}[ht]
  \centering
  \begin{tabular}{|c|c|c|c|c|c|}
    \hline
    Residue & Total curves & Odd parity & Even parity \\ \hline
    $1$ & $303961$ & $0$ & $303961$  \\ \hline
    $2$ & $303967$  & $0$ & $303967$  \\ \hline
    $3$ & $303961$  & $0$ & $303961$  \\ \hline
    $5$ & $303959$  & $303959$ & $0$  \\ \hline
    $6$ & $303962$ & $303962$ & $0$  \\ \hline
    $7$ & $303963$ & $303963$ & $0$  \\ \hline
  \end{tabular}
  \caption{The number of curves with given parity of their Selmer group}
  \label{paritytable}
\end{table}

The empirical data is clearly consistent with the Theorem \ref{selparity} above, as seen in \Cref{sel_distro}. 
We also look at a more refined aspect: the proportion of each specified $2$-Selmer rank in each residue class. 
\Cref{sel_distro} shows that for $D\equiv2\pmod{8}$, nearly $54\%$ curves have Selmer rank $0$, and in the case of $D\equiv5\pmod{8}$, nearly $85\%$ curves have $2$-Selmer rank $1$. 

\begin{figure}[ht]
  \centering
  \includegraphics[width=0.8\textwidth,clip]{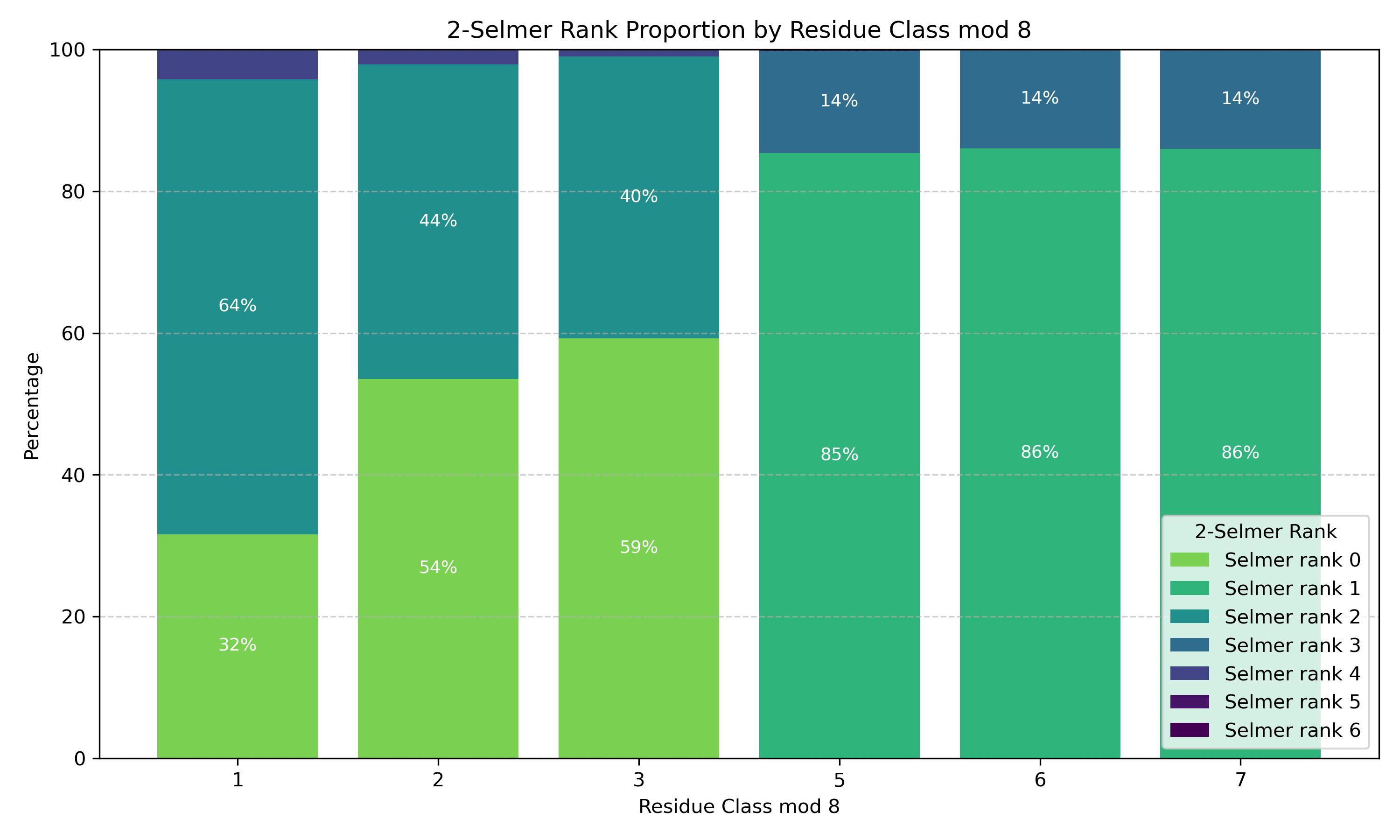}
  \caption{Proportion of Selmer ranks in each residue class}
  \label{sel_distro}
\end{figure}

Now we turn to congruent numbers. 
As stated earlier a (square-free) number is congruent if and only if the MW rank of the corresponding curve is nonzero. 
Since we have rank data for numbers up to slightly more than $1$ million, we only focus on those curves. 
The \Cref{cngpro} shows the histogram that displays the proportion of congruent numbers in each residue class. 
The empirical data reveals that slightly more than $10\%$ of square-free numbers which leave residue of $1$ or $3$ modulo $8$ are congruent.  
For positive integers that are $3\pmod{8}$ the percentage of congruent numbers $7.5$.  
On the other hand $100\%$ of the numbers that are $5,6,7\pmod{8}$ are congruent.  
This compels us to look at the distribution of the MW rank across these residue classes, however we postpone it till the end of this Section.

\begin{figure}[ht]
  \centering
  \includegraphics[width=0.8\textwidth,clip]{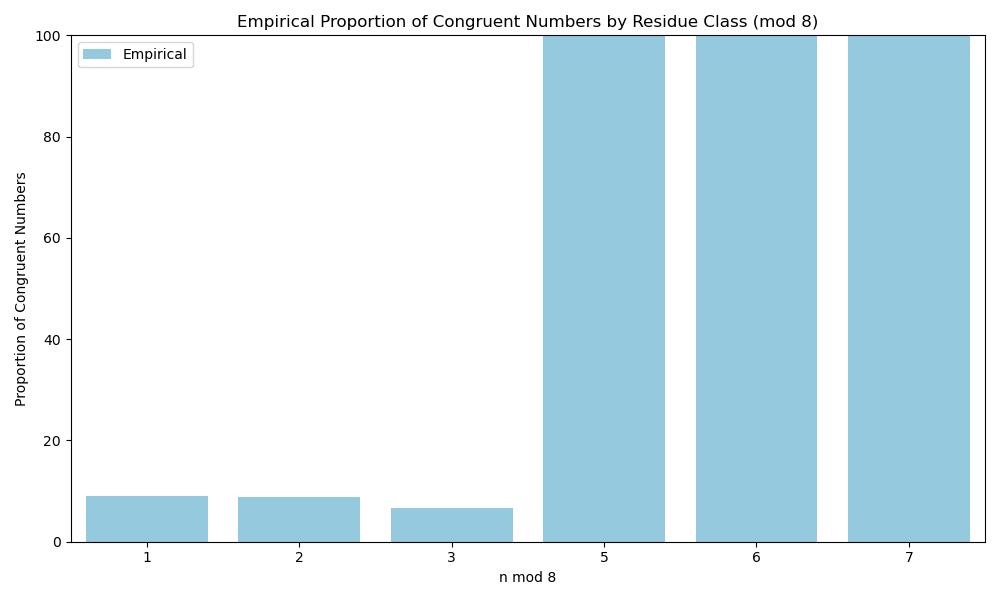}
  \caption{Proportion of congruent numbers in each residue class}
  \label{cngpro}
\end{figure}

If $E$ is an elliptic curve defined over a global field $F$, then it is conjectured that the average of $\#\mathrm{Sel}_p(E/F)$ is $p+1$. 
Bhargava and Shankar \cite{bs1} proved that the average size of $\mathrm{Sel}_2(E/\Q)$ is $3$, when one considers the family of all elliptic curves $E/\Q$. 
However, in the family of congruent number elliptic curves, this average is shifted by $2$, to account for the contribution by $E(\Q)[2]$. More precisely, one considers the average of $2^{s(D)}$, instead of $\#S_2(D)=2^{2+s(D)}$. 
\begin{defn}\label{defofS(X,h)}
    Let $h$ be an odd integer and $X>0$. Take $S(X,h)$ to be the set
\[S(X,h):=\{D \equiv h \pmod 8 \mid 0 < D \le X, \ D \text{ is square-free}\}.\]
\end{defn}

\begin{thm}{\cite[Theorem 1]{hb}}\label{avg2sel}
    For any odd integer $h$, we have 
    \[\sum_{D \in S(X,h)} 2^{s(D)} = 3\#S(X,h) + O(X(\log X)^{-1/4}(\log \log X)^{8}).\]
\end{thm}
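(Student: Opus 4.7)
The plan is to exploit the explicit $2$-descent description of $S_2(D)$ afforded by the full rational $2$-torsion on $E_D$. Writing the square-free integer $D = p_1 p_2 \cdots p_k$ with $k = \nu(D)$, classical descent identifies $S_2(D)$, modulo the image of $E_D(\Q)[2]$, with the kernel of a symmetric matrix $M(D) \in \Mat_{k \times k}(\F_2)$, the Monsky matrix, whose off-diagonal entries encode Legendre symbols $\left(\frac{p_i}{p_j}\right)$ and whose diagonal entries depend on $p_i \bmod 8$ and on the target residue $h$. One then has $2^{s(D)} = 2^{-c_h} \cdot \#\Ker M(D)$ for an explicit constant $c_h$ determined by $h \bmod 8$.

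The first step is to expand $2^{s(D)}$ into a character sum via the orthogonality identity
\[\#\Ker M(D) = \frac{1}{2^{k}}\sum_{v,w \in \F_2^k}(-1)^{w^{\top} M(D) v}.\]
Each summand $(-1)^{w^\top M(D) v}$ is a product of Legendre symbols among the $p_i$, indexed by a ``signature'' $(v,w)$. Substituting into $\sum_{D \in S(X,h)} 2^{s(D)}$ and swapping the order of summation reduces \Cref{avg2sel} to controlling, for every signature, the sum
\[\Sigma(X, h; v, w) := \sum_{D \in S(X,h)} \prod_{i \ne j} \left(\frac{p_i}{p_j}\right)^{m_{ij}(v,w)}, \qquad m_{ij}(v,w) \in \{0,1\},\]
where implicitly $D = p_1 \cdots p_{\nu(D)}$.

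Next, I would separate signatures into trivial and nontrivial. The trivial signatures (those with $m_{ij}(v,w) \equiv 0$, so the product is $1$) contribute exactly $3 \cdot \#S(X,h)$ after a combinatorial count of the $(v,w)$-pairs that produce the all-ones character; this is where the constant $3$ of \Cref{avg2sel} is born, consistent with the general heuristic that the $2$-Selmer group of a ``random'' elliptic curve has average size $3$, with the additional factor $4 = 2^2$ here absorbed by the rational $2$-torsion on $E_D$.

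The main obstacle is bounding the nontrivial contribution by $O(X(\log X)^{-1/4}(\log \log X)^{8})$. I would stratify by $k = \nu(D)$, and for each signature with $r$ nonzero entries $m_{ij}$ factor $D = A B$ where $A$ carries the primes on one ``side'' of each Legendre symbol in the signature. An application of Heath-Brown's quadratic large sieve for real characters then delivers square-root cancellation in the outer variable. Summing over $k$ via the Hardy--Ramanujan estimate $\#\{D \le X : \nu(D) = k\} \ll X(\log \log X)^{k-1}/((k-1)! \log X)$, with dominant $k$ near $\log \log X$, and then summing over the $2^{O(k^2)}$ signatures, one balances the character-sum saving against the combinatorial explosion. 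The saving $(\log X)^{-1/4}$ emerges from optimizing this balance, while the exponent $8$ on $\log \log X$ is the price of the triple stratification over signatures, factor counts, and $A$--$B$ decompositions. This careful bookkeeping is the technical heart of the argument.
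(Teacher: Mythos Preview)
The paper does not prove \Cref{avg2sel}; it is quoted verbatim from Heath-Brown's original article \cite{hb} and used solely as a benchmark against which empirical data are tested. There is therefore no ``paper's own proof'' to compare your proposal to. What you have written is a sketch of Heath-Brown's actual argument in \cite{hb,hb1}, not of anything appearing in this paper.

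As a sketch of Heath-Brown's method your outline is broadly faithful: the explicit $2$-descent, the expansion of $\#\Ker M(D)$ as a double sum over $\F_2^k \times \F_2^k$, the separation into linked and unlinked configurations, and the character-sum saving via a large-sieve-type estimate are all the right ingredients. A few points are imprecise. The Monsky matrix for $E_D$ with $D$ odd and having $k$ prime factors is not a single symmetric $k\times k$ matrix; in Heath-Brown's treatment the Selmer group is cut out by a pair of matrices (or equivalently a $2k \times 2k$ block matrix), and the relation $2^{s(D)} = 2^{-c_h}\#\Ker M(D)$ as you wrote it does not hold without qualification. The combinatorial identification of the main term $3\cdot \#S(X,h)$ is also more delicate than ``counting trivial signatures'': one must track which $(v,w)$-pairs yield an identically trivial Jacobi-symbol product after quadratic reciprocity is applied, and the count depends on $h$ through the diagonal entries. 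Finally, the balancing that produces $(\log X)^{-1/4}(\log\log X)^8$ is not simply large sieve versus $2^{O(k^2)}$ signatures; Heath-Brown first restricts to $D$ with $\nu(D)$ near its normal order and then handles the nontrivial configurations by a direct double-Jacobi-sum estimate rather than the quadratic large sieve per se. These are refinements rather than fatal gaps, but your proposal is a plan, not a proof.
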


To determine the distribution of $s(D)$ in more details, Heath-Brown \cite[Theorem 1]{hb1} also considers the average of higher moments of $s(D)$ to obtain the below
\begin{thm}\label{hb2thm1}
    For $h = 1, 3, 5$ or $7$, and any fixed positive integer $k$, we have \[\lim_{X\to\infty} \frac{1}{\#S(X,h)}\sum_{D \in S(X,h)} 2^{ks(D)} = c_k  + \frac{o_k(X)}{\#S(X,h)}, \]
where \[ c_k = \prod_{j=1}^k(1 + 2^j).\]
\end{thm}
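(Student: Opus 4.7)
The approach, following Heath-Brown \cite{hb1}, is to reduce the moment computation to the statistics of a matrix of Legendre symbols via $2$-descent. First I would recall the explicit description of $S_2(D)$: writing $D = p_1 \cdots p_t$, one parametrises $S_2(D)/E_D(\Q)[2]$ by pairs $(d_1, d_2)$ with $d_1 d_2 \mid 2D$ satisfying local solvability at each bad prime. These local conditions, after Hilbert-symbol and quadratic-reciprocity manipulations, translate into an $\mathbb{F}_2$-linear system whose coefficient matrix $M_D$ of size roughly $\nu(D) \times \nu(D)$ has entries given by the Legendre symbols $\bigl(\tfrac{p_i}{p_j}\bigr)$ together with boundary contributions involving $-1$ and $2$. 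The $2$-descent then yields the identity $2^{s(D)} = \#\ker M_D$.

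The key combinatorial identity is
\[2^{ks(D)} = \#\bigl\{ (v_1, \ldots, v_k) \in (\mathbb{F}_2^{\nu(D)})^k : M_D v_i = 0 \text{ for every } i \bigr\},\]
so after interchanging the order of summation, the left-hand side of the theorem becomes a sum over $k$-tuples $\underline{v}$ of ``prime-indexed'' vectors of the number of squarefree $D \in S(X,h)$ for which $M_D \underline{v} = 0$. For each fixed $\underline{v}$ this condition factors as a product of Legendre-symbol relations among the prime divisors of $D$, so the inner count is a character sum over squarefree integers with a prescribed residue class modulo $8$.

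I would then stratify the tuples $\underline{v}$ by the $\mathbb{F}_2$-span they generate and, for each stratum, distinguish \emph{degenerate} tuples, for which the induced character is principal, from \emph{generic} tuples, for which it is a nontrivial quadratic character. For degenerate tuples the inner count is $\#S(X,h) + O(X/\log X)$ by standard sieving of squarefrees in arithmetic progressions. For generic tuples the large sieve, combined with Siegel--Walfisz applied to the fixed modulus $8$, bounds the contribution by $O(X(\log X)^{-A})$ for any $A > 0$. A direct enumeration of degenerate configurations, using Gaussian binomial coefficients to count the ordered $k$-tuples spanning a subspace of each dimension $d$, collapses to the constant $c_k = \prod_{j=1}^k (1 + 2^j)$; the case $k = 1$ specialises to \Cref{avg2sel}.

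The main obstacle is the uniformity of the character-sum estimates and the bookkeeping required to handle the fact that $M_D$ varies with $D$ both in size and in entries. The notion of ``degenerate'' tuple is therefore intrinsically $D$-dependent, and one must re-encode degeneracy purely through Legendre-symbol identities among a fixed finite set of primes before applying the large sieve; the combinatorial count of such identities, assembled across all strata, is what produces the clean product formula for $c_k$. Controlling the remainder $o_k(X)$ with an effective dependence on $k$ is the technical heart of the argument and is handled in \cite{hb1} by a combination of bilinear character-sum estimates and a structural analysis of symmetric forms over $\mathbb{F}_2$.
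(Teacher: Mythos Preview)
The paper does not contain a proof of this theorem: it is quoted verbatim as \cite[Theorem 1]{hb1} and then used as a benchmark against which the empirical moment data are compared. There is therefore no ``paper's own proof'' to compare your proposal to.

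That said, your sketch is a fair high-level outline of Heath-Brown's actual argument in \cite{hb1}: the $2$-descent identification of $2^{s(D)}$ with the kernel size of a Legendre-symbol matrix, the rewriting of $2^{ks(D)}$ as a count of $k$-tuples in that kernel, the interchange of summation followed by a split into a main term (from linked/degenerate configurations) and an error term controlled by cancellation in character sums. One point to tighten if you pursue this: Heath-Brown's treatment does not stratify by the $\mathbb{F}_2$-span of the tuple and then invoke the large sieve directly as you describe; rather, he organises the sum according to the combinatorics of ``linked'' and ``unlinked'' pairs of indices and reduces the main term to a product over $j$ via a recursive identity, while the error is handled through bilinear double-character-sum estimates of Jutila type. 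Your description of the degenerate/generic dichotomy as depending on $D$ is also slightly off: the point of Heath-Brown's setup is precisely to parametrise the sum so that the combinatorial type is fixed first and the integers $D$ vary within it, not the other way around.
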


There is a more general version of the above statement conjectured by Poonen and Rains, which says that for positive integers $m$, the average of $(\#\mathrm{Sel}_p(E))^m$ over all elliptic curves is $\prod_{i=1}^m(p^i + 1)$. 
The conjecture has been proved for $p=2, 3$. 

We now start presenting our findings regarding empirical verification of Heath-Brown's heuristics. 
First, we exhibit the most striking patterns predicted in Heath-Brown’s work — that the behavior of the $2$-Selmer rank depends statistically on the residue class mod $8$, with some classes favoring even parity and higher ranks.
However, at this point, due to \Cref{sel_distro} it should not come as a surprise. 
It is clear from \Cref{condtionalmoments}, that for $D\equiv 5, 6, 7\pmod{8}$, the average size is much closer to the theoretical prediction. 
In the case of $1\pmod{8}$ curves, the average size is bigger and for the remaining two the average size is smaller compared to the theoretical prediction. 
Hence, there is a lot of scope to further explore the error term.
\begin{figure}[ht]
    \centering
    \includegraphics[width=0.5\linewidth]{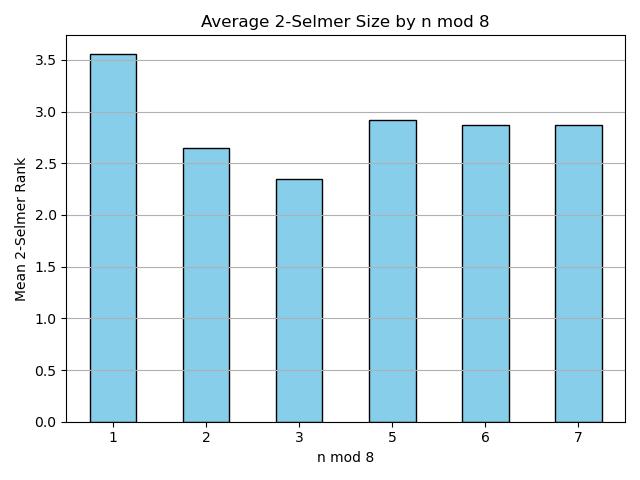}
    \caption{Average $2$-Selmer size per residue classes}
    \label{condtionalmoments}
\end{figure}

The first step in devising experiments to verify Heath-Brown's results was to reduce the size of the dataset by considering only odd square-free numbers. 
This brought down the number of curves to $1215844$. 
Then,  we compute the theoretical $c_k$ values, which are $3, 15, 35$ for $k = 1, 2, 3$ respectively. 
The empirical findings about the first $3$ moments of the Selmer size are described in \Cref{momentslog}

\begin{figure}[ht]
    \centering
    \includegraphics[width=0.6\linewidth,clip]{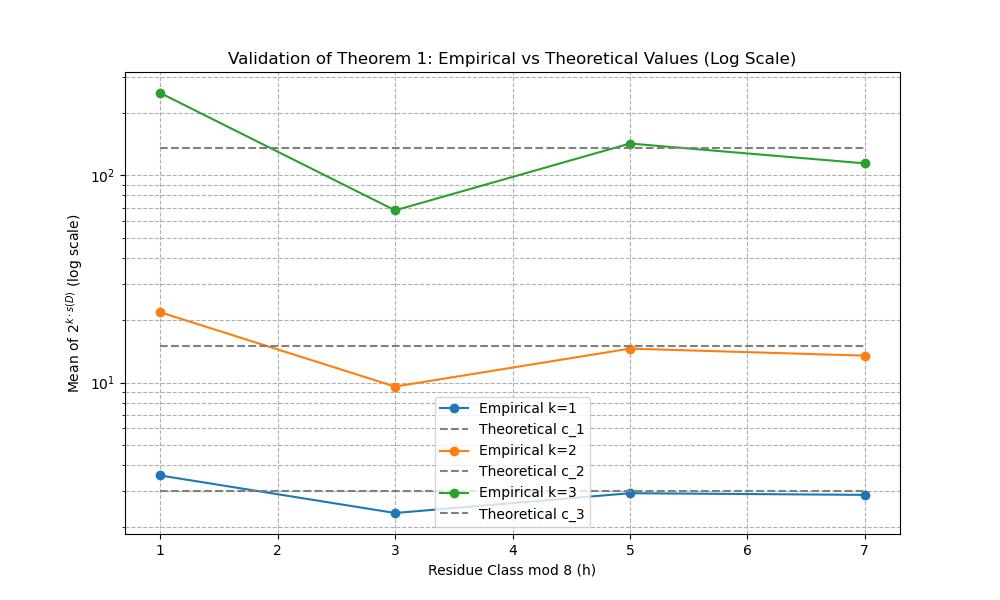}
    \caption{Moments of $2$-Selmer size on log scale}
    \label{momentslog}
\end{figure}

Again, the curves with $D$ congruent to $5, 7\pmod{8}$ are much closer to the theoretical values than the other two. 
As per the theorem, the theoretical constants $c_k$ should not depend on the residue class. 
So, it should be an interesting problem to explain the behavior of $1, 3\pmod{8}$ curves. 
It remains to be seen whether increasing the dataset size would clarify this pattern.
Is this phenomenon somehow related to the observation that there are very few congruent numbers that have residue $3\pmod{8}$ and are less than $3$ million? 
We refer the reader to our Jupyter Notebook to see the actual table of numbers. 

We now study the error term $o_k(X)$ in the statement of \Cref{hb2thm1}.
Given $k$ and $X$ define the log factor as 
\[\mathrm{lf}(k,X) := \frac{(\log\log X)^{4^k}}{(\log X)^{\frac{1}{4^k}}}.\]
Later in the paper \cite[Section 3]{hb1}, Heath-Brown provides more explicit value of the error term as $o_k(X) = O_k(X \mathrm{lf(k, X)}$.  
Our aim is to compare the difference between theoretical and empirical moments with the log factor, we would like to see if their ratio is constant as $X$ increases. 
To be precise, we first compute the following difference quantity
\[\Delta(X, k, h) := \left| \sum_{D\leq X, D\equiv h\pmod{8}}2^{k\cdot s(D)} - c_k\cdot \#S(X, h)\right|, \]
across increasing $X$. 
Then we check if $\Delta(X,k,h)/ (X \mathrm{lf}(k, X))$, which we call the normalized error, is roughly constant or not?
A plot of our empirical findings can be seen in \Cref{loglogterm}. 
It is clear that for $k=1$, the normalized error is more or less constant for $h = 1, 3$. 
However, for $k=2$ the ratio reduces with increasing $X$, indicating that there could some room for improvement. 

\begin{figure}[ht]
  \centering
  \includegraphics[width=0.65\textwidth]{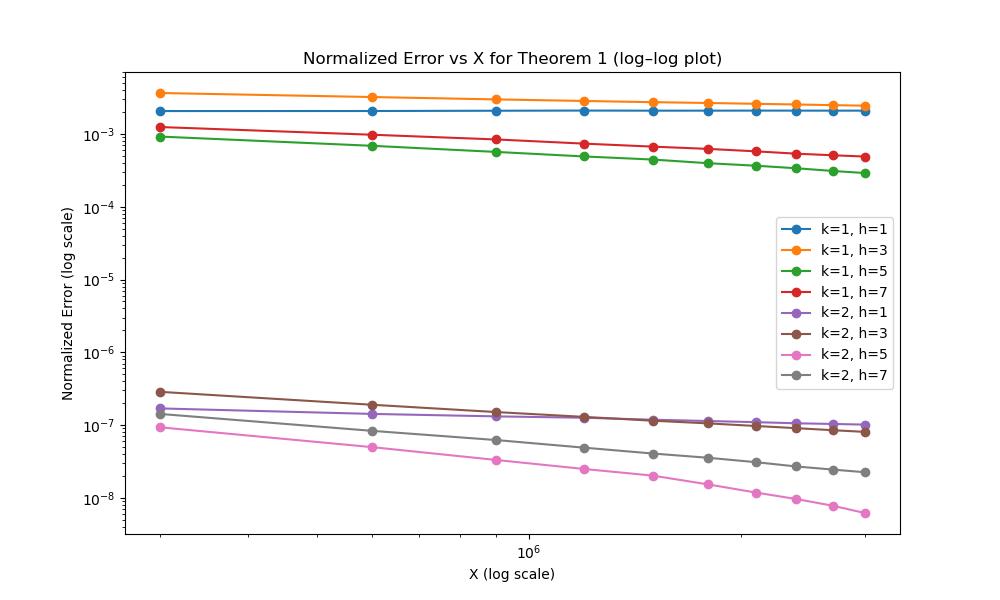}
  \caption{The error term across odd residue classes}
  \label{loglogterm}
\end{figure}

We now consider the next major result of Heath-Brown, that explicitly computes the proportion of curves, in each residue class, that has the given $2$-Selmer rank. 
In short, the result provides a formula for the probability that a given curve has rank $r$. 
The empirical probabilities are given in \Cref{selmerprobs} and a visualization of these numbers is in \Cref{prob_visual}.

\begin{thm}{\cite[Theorem 2]{hb1}}\label{s=r_thm}
Let $r$ be a non-negative integer. Then,
\[\mathrm{Prob}(\{s(D) = r\mid D\equiv h\pmod{8}\}) = \prod_{n=1}^{\infty} \frac{1}{(1+2^{-n})} \cdot\frac{2^r}{\prod_{j=1}^{r}(2^j+1)},\]
    if either $r$ is even and $h = 1$ or $3$, or $r$ is odd and $h = 5$ or $7$.
\end{thm}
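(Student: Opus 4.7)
The plan is to apply the classical method of moments. The input data are \Cref{hb2thm1} (convergence of the exponential moments $\int 2^{ks}\, d\mu_{X,h}(s) \to c_k = \prod_{j=1}^k(1+2^j)$) and \Cref{selparity} (the support of the limiting distribution is the non-negative even integers for $h \in \{1,3\}$, and the positive odd integers for $h \in \{5,7\}$). Here $\mu_{X,h}$ denotes the empirical distribution of $s(D)$ over $D \in S(X,h)$, and $p_r$ denotes the proposed formula from the statement.

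First, I would verify that $p_r$ defines a probability measure (on the correct parity support) with exponential moments $c_k$; that is,
\[
\sum_{r \text{ (parity)}} p_r \;=\; 1 \qquad\text{and}\qquad \sum_{r \text{ (parity)}} 2^{kr}\, p_r \;=\; c_k \quad (k \ge 0).
\]
Both identities reduce to classical $q$-series identities at $q = 1/2$. Writing $\prod_{j=1}^r(2^j+1) = 2^{r(r+1)/2}(-q;q)_r$ converts the sums into specializations of a $q$-hypergeometric series whose closed form follows from Euler's identities and the $q$-binomial theorem. The parity separation is obtained by symmetrizing in $z$ and $-z$ in the relevant generating function, and the shift $z \mapsto 2^k z$, combined with telescoping of the Pochhammer product, produces the $k$-th moment identity from the $k=0$ one.

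Next, I would establish uniqueness of the moment problem. Since $p_r$ decays super-exponentially, of order $2^{-r(r-1)/2}$, the moment generating function $t \mapsto \sum_r p_r e^{tr}$ is an entire function of $t$. Consequently Carleman's condition on the ordinary polynomial moments is satisfied, and $(p_r)$ is the unique probability distribution on non-negative integers of the prescribed parity whose base-$2$ exponential moments match $c_k$ for every $k \ge 0$.

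Finally, tightness of $\{\mu_{X,h}\}_X$ follows from the uniform bound $\mu_{X,h}(\{s > R\}) \le 3 \cdot 2^{-R} + o(1)$, via Markov applied to the first exponential moment $c_1 = 3$. Tightness plus moment convergence forces every weak subsequential limit to coincide with $p$, and on a discrete support this is equivalent to pointwise convergence $\mu_{X,h}(\{r\}) \to p_r$, yielding the stated formula. The principal difficulty lies in the $q$-series step: matching the denominator product $\prod_{j=1}^r(2^j+1)$ against the moment product $c_k = \prod_{j=1}^k(1+2^j)$ simultaneously across both parity classes within a single generating-function framework. Once those combinatorial identities are in hand, the uniqueness and tightness arguments are routine.
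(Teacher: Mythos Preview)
The paper does not contain a proof of this statement: it is quoted verbatim as \cite[Theorem~2]{hb1} and used only as a benchmark against which the empirical $2$-Selmer rank frequencies are compared (see \Cref{selmerprobs} and \Cref{prob_visual}). There is therefore nothing in the present paper to compare your argument against.

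That said, your outline is the correct one, and it is essentially Heath-Brown's own derivation in \cite{hb1}: he first establishes the moment asymptotics (stated here as \Cref{hb2thm1}), then recovers the pointwise probabilities by the method of moments. Your three ingredients --- verifying that the candidate distribution $(p_r)$ reproduces the moments $c_k$ via $q$-series identities, invoking Carleman (or simply the super-exponential decay $p_r \asymp 2^{-r(r-1)/2}$) for determinacy, and using the first-moment bound for tightness --- are exactly what is needed. One minor point: you should make explicit that the parity constraint from \Cref{selparity} forces $\mu_{X,h}$ to be supported on integers of a fixed parity for \emph{every} $X$, not just in the limit; this is what justifies restricting the candidate distribution to a single parity class before matching moments.
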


\begin{table}[ht]
\centering
\begin{tabular}{|c|c|c|c|}
\hline
\textbf{Residue class} & $\mathbf{s(D)}$ & \textbf{Empirical} & \textbf{Theoretical} \\ \hline
\multirow{3}{*}{$D\equiv 1,3\pmod{8} $} & $0$ & $0.454391$ & $0.419422$ \\ \cline{2-4}
                                   & $2$ & $0.519608$ & $0.559230$   \\ \cline{2-4}
                                   & $4$ & $0.025929$ & $0.063912$ \\ \hline
\multirow{3}{*}{${D\equiv 5,7\pmod{8}} $} & $1$ & $0.856714$ & $ 0.838845$ \\ \cline{2-4}
                                   & $3$ & $0.141791 $ & $ 0.223692$ \\ \cline{2-4}
                                   & $5$ & $0.001495$ & $0.014203$ \\ \hline
\end{tabular}
\label{selmerprobs}
\caption{Probabilities per two groups of residue classes}
\end{table}

\begin{figure}[ht]
    \centering
    \begin{tabular}{cc}
        \includegraphics[clip,width=0.45\textwidth]{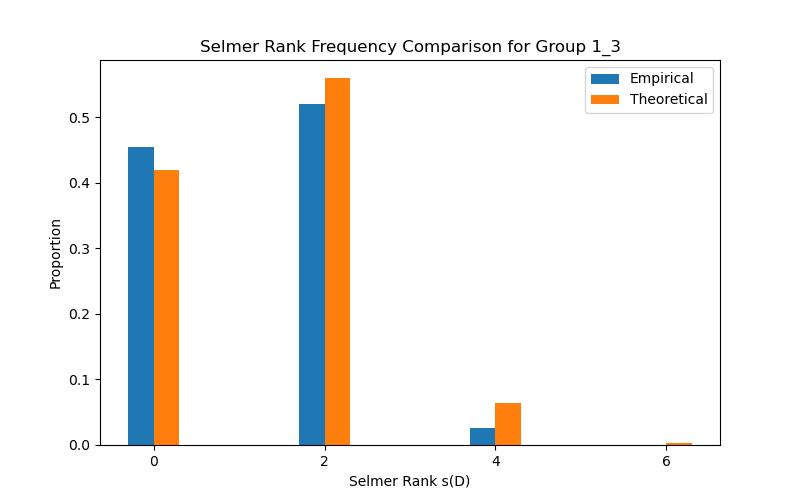} & \includegraphics[clip,width=0.45\textwidth]{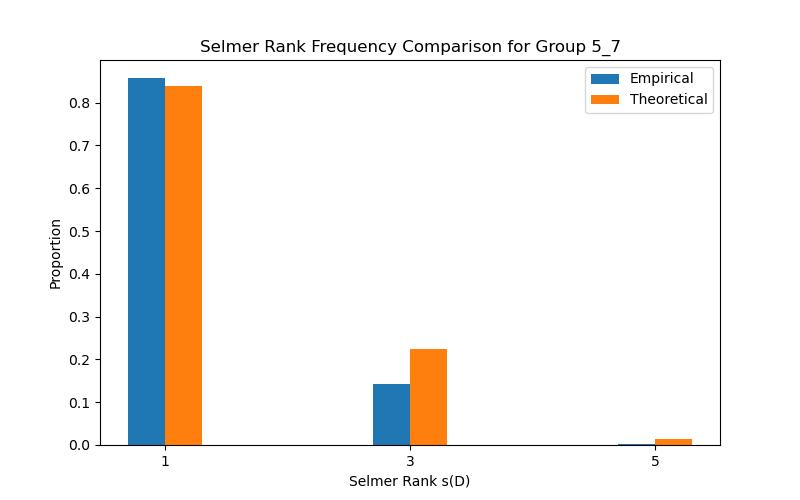}  
    \end{tabular}
    \caption{Probability distribution of $2$-Selmer ranks: empirical vs. theoretical comparison}
    \label{prob_visual}
\end{figure}

The obvious direction one can explore after this is to determine the proportion of curves with large Selmer ranks as $D$ tends to infinity. 
As the next result shows, this proportion decreases rapidly. 

\begin{cor}{\cite[Corollary 1]{hb}}\label{trailingprob}
For any fixed positive integer $r$ we have
\[\frac{\#\{D\in S(X, h)\mid s(D) \geq r\}}{\# S(X, h)} \leq 1.7313\dots \times 2^{-(r^2-r)/2} + o(1),\]
as $X\to\infty$.
\end{cor}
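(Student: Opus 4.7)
The plan is to apply Markov's inequality to the random variable $2^{ks(D)}$, where the positive integer $k$ will be chosen as a function of $r$, and to control the right-hand side using the moment estimate already available from \Cref{hb2thm1}.

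\emph{Step 1 (Markov).} Since $s(D)\geq r$ implies $2^{ks(D)}\geq 2^{kr}$ for every integer $k\geq 0$, we get
\[
\#\{D\in S(X,h):s(D)\geq r\}\cdot 2^{kr}\;\leq\;\sum_{D\in S(X,h)}2^{ks(D)}.
\]
Invoking \Cref{hb2thm1}, the right-hand side equals $c_k\,\#S(X,h)+o_k(X)$ with $c_k=\prod_{j=1}^{k}(1+2^{j})$. Dividing through by $\#S(X,h)\cdot 2^{kr}$,
\[
\frac{\#\{D\in S(X,h):s(D)\geq r\}}{\#S(X,h)}\;\leq\; c_k\,2^{-kr}+o(1)\qquad(X\to\infty).
\]

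\emph{Step 2 (Optimizing $k$).} Using the identity $1+2^{j}=2^{j}(1+2^{-j})$, one writes $c_k=2^{k(k+1)/2}\prod_{j=1}^{k}(1+2^{-j})$, so that
\[
c_k\,2^{-kr}\;=\;2^{k(k+1-2r)/2}\prod_{j=1}^{k}(1+2^{-j}).
\]
The quadratic $k(k+1-2r)/2$ is minimized over the reals at $k=r-\tfrac12$. Among integers, both $k=r-1$ and $k=r$ give the common minimum exponent $-\tfrac{r(r-1)}{2}=-\tfrac{r^{2}-r}{2}$; the choice $k=r-1$ yields the smaller constant prefactor. Substituting,
\[
\frac{\#\{D\in S(X,h):s(D)\geq r\}}{\#S(X,h)}\;\leq\;2^{-(r^{2}-r)/2}\prod_{j=1}^{r-1}(1+2^{-j})\;+\;o(1).
\]

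\emph{Step 3 (The absolute constant).} The partial product is non-decreasing in $r$ and bounded above by the convergent infinite product $\prod_{j=1}^{\infty}(1+2^{-j})$. Numerical evaluation (or the Euler identity $\prod_{j\geq 1}(1+2^{-j})=\prod_{j\geq 1}(1-2^{-(2j-1)})^{-1}$) gives the universal bound $\prod_{j=1}^{\infty}(1+2^{-j})=2.3842\ldots$, which is already a valid absolute constant independent of $r$. Sharpening this to $1.7313\ldots$ requires a little more care: one exploits the parity restriction of \Cref{selparity}, which forces $s(D)$ to take values of a single parity in each residue class, so that the events $s(D)=r-1,\,r-3,\ldots$ contribute nothing and the tail $\{s(D)\geq r\}$ actually coincides with $\{s(D)\geq r'\}$ for the next admissible $r'$; alternatively, one can compute the tail sum directly from the explicit densities in \Cref{s=r_thm} and take its supremum over $r\geq 0$.

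The main obstacle will be this last refinement: obtaining the numerical constant $1.7313\ldots$ rather than the naive $2.3842\ldots$. The Markov-plus-moments skeleton gives the correct decay rate $2^{-(r^{2}-r)/2}$ immediately and with the right optimization of $k$, but squeezing the prefactor down to the sharper value stated in the corollary requires either (i) combining the Markov bound with the parity obstruction on $s(D)$ so that the inequality is really being applied at $r+1$ in half the cases, or (ii) replacing the blunt Markov step by a direct summation of the limiting probabilities from \Cref{s=r_thm}, which then reduces the problem to bounding a single explicit infinite $q$-series in $q=1/2$.
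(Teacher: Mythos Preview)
The paper under review does not give its own proof of this corollary; it is quoted from Heath-Brown \cite{hb} solely as a benchmark for the empirical tables and figures that follow. There is therefore nothing in the present paper to compare your argument against.

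Assessing your proposal on its own merits: Steps~1 and~2 are correct and standard, and with the choice $k=r-1$ they deliver the decay $2^{-(r^{2}-r)/2}$ with prefactor $\prod_{j=1}^{r-1}(1+2^{-j})$. However, this prefactor already exceeds $1.7313$ at $r=4$ (where it equals $1.5\cdot 1.25\cdot 1.125=2.109\ldots$) and increases to $\prod_{j\ge 1}(1+2^{-j})=2.384\ldots$ as $r\to\infty$, so the pure Markov argument cannot yield the constant in the statement. Your proposed fix~(i) via parity does not close this: when $r$ has the \emph{same} parity as $s(D)$ --- e.g.\ $r$ even and $h\in\{1,3\}$ --- there is no free upgrade from $r$ to $r+1$, and you are stuck with the Markov prefactor just computed. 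Your proposed fix~(ii), summing the exact limiting densities from \Cref{s=r_thm}, does produce the sharp tail bound, but \Cref{s=r_thm} is the main theorem of Heath-Brown's \emph{second} paper \cite{hb1} and is strictly stronger than the corollary you are trying to prove; invoking it is a deduction from a finer result rather than an independent proof. In short: your skeleton is correct for the exponent, you have honestly flagged the gap on the constant, but neither of your two suggested repairs actually carries through as written --- one is insufficient, the other is circular in the sense of assuming more than the conclusion.
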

The empirical validation of the above result is described in \Cref{trailp}. 
Though the statement holds for every odd residue class, for brevity we have clubbed them in two groups $1, 3$ and $5, 7$. 
Note that, in each of these groups the Selmer rank has the same parity, and the cardinality $\#S(X, h)$ is almost the same for each $h$. 
However, our notebook has a cell where we compute these probabilities for every $h$ and there is not much difference in the results. 
The actual values of these trailing probabilities are in \Cref{tabletrailp}

\begin{figure}[ht]
    \centering
    \includegraphics[width=0.75\linewidth,clip]{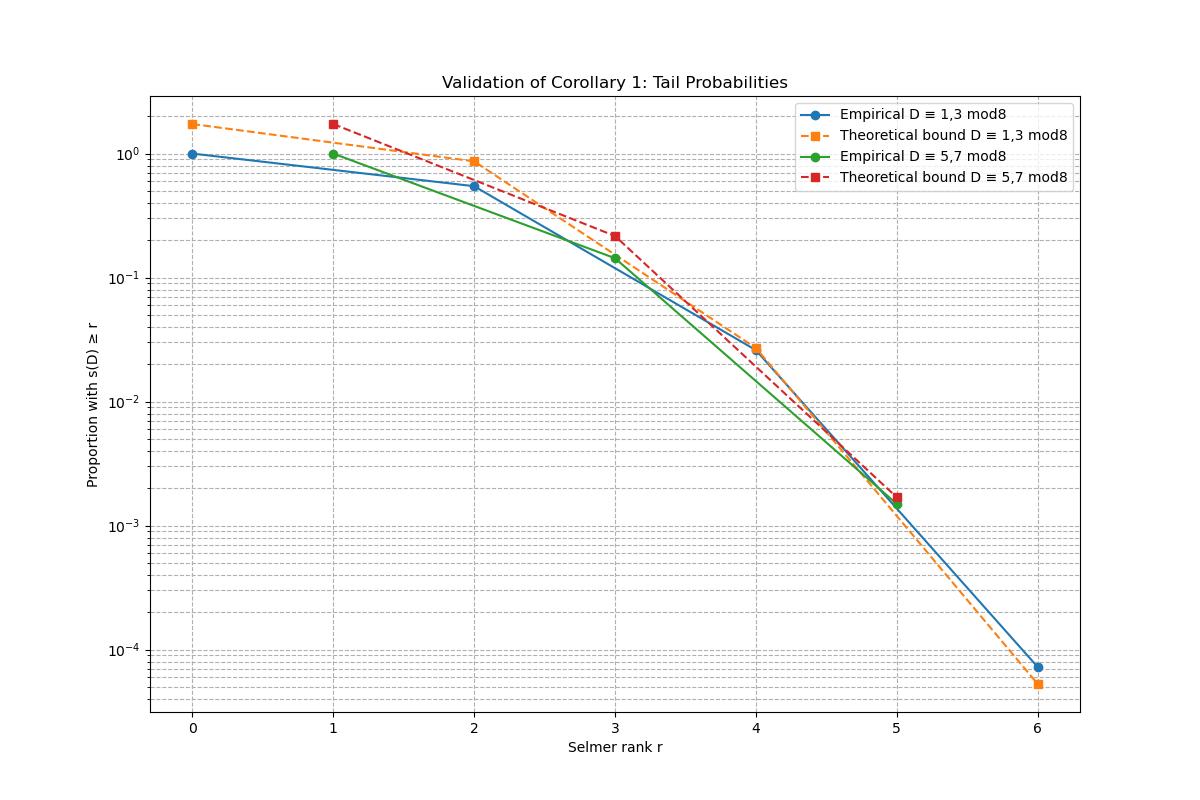}
    \caption{Trailing probabilities $2$-Selmer ranks on log scale}
    \label{trailp}
\end{figure}

\begin{table}[ht]
\centering
\begin{tabular}{|c|c|c|c|}
\hline
\textbf{Residue class} & $\mathbf{r}$ & \textbf{Empirical} & \textbf{Theoretical} \\ \hline
\multirow{3}{*}{$D\equiv 1,3\pmod{8} $} & $0$ & $1.000000$ & $1.731300$ \\ \cline{2-4}
                                   & $2$ & $0.545609$ & $0.865650$   \\ \cline{2-4}
                                   & $4$ & $0.026002$ & $0.027052$ \\ \hline
\multirow{3}{*}{${D\equiv 5,7\pmod{8}} $} & $1$ & $1.000000$ & $ 1.731300$ \\ \cline{2-4}
                                   & $3$ & $0.143286 $ & $ 0.216413$ \\ \cline{2-4}
                                   & $5$ & $0.001495$ & $0.001691$ \\ \hline
\end{tabular}
\caption{Trailing probabilities per two groups of odd residue classes}
\label{tabletrailp}
\end{table}
Additionally, the Selmer conjecture suggests that the parity of $r(D) \equiv s(D) \pmod 2$. 
Thus, under the assumption of BSD, one should have that the parity of $r_{an}(D)$ is the same as that of $s(D)$. 
We have the following corollaries about the average of the $2$-Selmer ranks $s(D)$.
\begin{cor}{\cite[Corollary 2]{hb}}
    For any odd integer $h$, we have 
    \[\sum_{D \in S(X,h)} {s(D)} \le \frac{3}{2}\#S(X,h) + O(X(\log X)^{-1/4}(\log \log X)^{8}).\]
    Further, assuming that $s(D)$ and $r_{an}(D)$ always have the same parity, we have
    \[\sum_{D \in S(X,h)} {s(D)} \le \frac{4}{3}\#S(X,h) + O(X(\log X)^{-1/4}(\log \log X)^{8}).\]
    Hence, $s(D)=0$ for at least $1/3$-rd of all $D \equiv 1, 3 \pmod 8$ and $s(D)=1$ for at least $5/6$-th of all $D \equiv 5, 7 \pmod 8$.
\end{cor}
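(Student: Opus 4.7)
The plan is to convert Theorem~\ref{avg2sel}, which controls the first moment of $2^{s(D)}$, into a bound on the first moment of $s(D)$ via a pointwise inequality relating $s$ and $2^{s}$, sharpened where possible by the parity restriction from Theorem~\ref{selparity}. For the unconditional $\tfrac{3}{2}$ bound, I would use the elementary inequality $s \le \tfrac{1}{2}\cdot 2^{s}$, valid for every non-negative integer $s$ (check $s=0,1,2$ directly and note that $2^{s}$ outpaces $2s$ for $s\ge 2$). Summing this over $D \in S(X,h)$ and invoking Theorem~\ref{avg2sel} gives
\[
\sum_{D\in S(X,h)} s(D) \;\le\; \tfrac{1}{2}\sum_{D\in S(X,h)} 2^{s(D)} \;=\; \tfrac{3}{2}\#S(X,h) + O\!\left(X(\log X)^{-1/4}(\log\log X)^{8}\right),
\]
which is the first claim. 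A small linear-programming check, minimizing $3a+b$ over affine upper envelopes $s\le a\cdot 2^{s}+b$ on $s\ge 0$, confirms that $\tfrac{3}{2}$ is the optimal constant obtainable this way without using parity.

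For the improved $\tfrac{4}{3}$ bound, the parity hypothesis combined with Birch--Stephens forces $s(D)$ to be even for $h\equiv 1,3\pmod 8$ and odd for $h\equiv 5,7\pmod 8$. Restricting $s$ to one residue class modulo $2$ tightens the envelope: for even $s\ge 0$ one has $s \le \tfrac{2}{3}(2^{s}-1)$, with equality at $s=0,2$, and for odd $s\ge 1$ one has $s \le \tfrac{1}{3}(2^{s}+1)$, with equality at $s=1,3$. Both envelopes arise as the optima of the analogous parity-restricted linear program, whose two tight constraints are the two smallest admissible values of $s$. Summing either inequality over $D\in S(X,h)$ and applying Theorem~\ref{avg2sel} yields
\[
\sum_{D\in S(X,h)} s(D) \;\le\; \tfrac{4}{3}\#S(X,h) + O\!\left(X(\log X)^{-1/4}(\log\log X)^{8}\right).
\]
Notably, the even and odd linear programs both produce the slope $\tfrac{4}{3}$, which is exactly why a single bound works uniformly across all four odd residue classes.

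The proportion statements now follow by a Markov-type step. For $h\equiv 1,3\pmod 8$, every $D$ with $s(D)\ne 0$ contributes at least $2$ (since $s(D)$ is a non-negative even integer), so
\[
2\bigl(\#S(X,h)-\#\{D : s(D)=0\}\bigr) \;\le\; \tfrac{4}{3}\#S(X,h)+O(\cdot),
\]
which rearranges to $\#\{D : s(D)=0\}\ge \tfrac{1}{3}\#S(X,h)+O(\cdot)$. For $h\equiv 5,7\pmod 8$, every $D$ with $s(D)\ne 1$ contributes at least $3$, so $\#S(X,h)+2\#\{D : s(D)\ge 3\}\le \tfrac{4}{3}\#S(X,h)+O(\cdot)$, yielding $\#\{D : s(D)=1\}\ge \tfrac{5}{6}\#S(X,h)+O(\cdot)$. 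Once Theorem~\ref{avg2sel} is in hand the argument is essentially mechanical; the only genuinely non-routine step is spotting the two parity-restricted linear envelopes, which is a transparent two-variable optimization, so there is no serious obstacle here.
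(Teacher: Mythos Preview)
The paper does not supply its own proof of this corollary; it merely quotes the statement from Heath-Brown \cite[Corollary~2]{hb} and proceeds to discuss empirical data. So there is no in-paper argument to compare against.

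That said, your proof is correct and is essentially the standard deduction from Theorem~\ref{avg2sel}. The pointwise bounds $s\le \tfrac12\cdot 2^s$ (unrestricted), $s\le \tfrac{2}{3}(2^s-1)$ (even $s$), and $s\le \tfrac{1}{3}(2^s+1)$ (odd $s$) are exactly the optimal affine envelopes, and after summation each of the latter two does collapse to the same $\tfrac{4}{3}\,\#S(X,h)$ main term, as you observe. The Markov-type extraction of the $\tfrac{1}{3}$ and $\tfrac{5}{6}$ proportions is also clean. One small remark on the hypothesis: in the present paper Theorem~\ref{selparity} (Monsky) already gives the parity of $s(D)$ unconditionally, so strictly speaking you do not need to route through $r_{an}(D)$ and Birch--Stephens; the ``assuming'' clause in the corollary reflects the historical fact that Monsky's result only appeared in the appendix to \cite{hb1}, after \cite{hb}.
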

The above statement was improved by Heath-Brown in the subsequent paper, as we now state. 
First, define constants $c_h'$as follows:
\begin{align*}
    c'_h \approx \begin{cases}
        1.2039\dots \text{ if } h = 1, 3\\ 
        1.3250\dots \text{ if } h = 5, 7
    \end{cases}
\end{align*}

These constants turn out to be average Selmer rank values, as stated in the Corollary below.

\begin{cor}{\cite[Corollary 2]{hb1}}\label{c'c''cor}
    We have
    \[{\sum_{D \in S(X,h)} s(D)} = c'_h{\#S(X,h)} + o(X), \text{ as } X \to \infty.\]
\end{cor}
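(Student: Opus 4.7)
The plan is to combine the pointwise distributional result in \Cref{s=r_thm} with the first-moment bound in \Cref{avg2sel} via a truncation argument in the spirit of summation by parts. First, invoke the layer-cake identity
\[\sum_{D \in S(X,h)} s(D) \;=\; \sum_{r \geq 1} N_{\geq r}(X,h), \qquad N_{\geq r}(X,h) := \#\{D \in S(X,h) : s(D) \geq r\},\]
so that the problem reduces to controlling tail counts. Under the limiting distribution of \Cref{s=r_thm}, restricted to the parity class forced by \Cref{selparity} (even for $h = 1, 3$; odd for $h = 5, 7$), the anticipated leading constant is
\[c'_h \;=\; \sum_{r \geq 1} q_{r,h} \;=\; \sum_{r \geq 0} r \cdot p_{r,h},\]
where $p_{r,h}$ denotes the pointwise probability from \Cref{s=r_thm} and $q_{r,h} := \sum_{s \geq r} p_{s,h}$; equivalently $c'_h$ is the expectation $\E[s(D)]$ under this limiting distribution. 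In particular $p_{r,h}$ (and hence $c'_h$) depends on $h$ only through its parity, so $c'_1 = c'_3$ and $c'_5 = c'_7$ will follow automatically.

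To make this formal, the key auxiliary input is a uniform tail bound. Markov's inequality applied to \Cref{avg2sel} gives, for every $r \geq 0$ and every $X$,
\[\frac{N_{\geq r}(X,h)}{\#S(X,h)} \;\leq\; \frac{\sum_{D \in S(X,h)} 2^{s(D)}}{2^r \cdot \#S(X,h)} \;=\; \frac{3 + o(1)}{2^r}.\]
Fixing $\varepsilon > 0$ and choosing $R = R(\varepsilon)$ so that $\sum_{r > R} 4 \cdot 2^{-r} < \varepsilon$, the tail contribution beyond $R$ is at most $\varepsilon \cdot \#S(X,h)$ for all sufficiently large $X$. For the finitely many $r \leq R$, \Cref{s=r_thm} gives $N_{\geq r}(X,h)/\#S(X,h) \to q_{r,h}$ as $X \to \infty$, hence
\[\sum_{r = 1}^R N_{\geq r}(X,h) \;=\; \left(\sum_{r=1}^R q_{r,h}\right) \#S(X,h) \;+\; o(\#S(X,h)).\]
Combining the two pieces and using $\#S(X,h) \asymp X$ yields
\[\sum_{D \in S(X,h)} s(D) \;=\; c'_h \cdot \#S(X,h) \;+\; O(\varepsilon \cdot X) \;+\; o_R(X),\]
and letting $\varepsilon \to 0$ after $X \to \infty$ produces the claimed $o(X)$ error.

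The remaining task is purely numerical: since $p_{r,h}$ decays doubly-exponentially in $r$ by the shape of the formula in \Cref{s=r_thm}, a handful of terms of $\sum_{r \geq 0} r \cdot p_{r,h}$ suffices to evaluate $c'_1 = c'_3 \approx 1.2039\dots$ and $c'_5 = c'_7 \approx 1.3250\dots$ The main technical obstacle is the truncation step. One cannot get away with the pointwise convergence in \Cref{s=r_thm} alone, since the error term there has no uniformity in $r$, and without uniformity the interchange of the infinite sum over $r$ with the limit in $X$ is illegitimate. It is precisely the \emph{uniform} first-moment bound \Cref{avg2sel} (or, if one wants faster-decaying tails, its higher-moment analogues in \Cref{hb2thm1}) that supplies the necessary control and makes the whole argument go through.
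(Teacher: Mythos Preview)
The paper does not give a proof of this corollary; it is quoted verbatim from Heath-Brown's paper \cite{hb1}, with the surrounding text devoted to empirical comparison rather than argument. So there is no ``paper's own proof'' to compare against.

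That said, your proof is correct and is essentially the standard route by which such a corollary is extracted from \Cref{s=r_thm} and the moment bounds. The layer-cake decomposition plus Markov's inequality from \Cref{avg2sel} is exactly the right mechanism: it supplies the uniform integrability that turns the pointwise distributional convergence of \Cref{s=r_thm} into convergence of the mean. Your identification of the obstacle (no uniformity in $r$ in the error term of \Cref{s=r_thm} alone) is accurate, and your remedy is the natural one. Two minor remarks: (i) you could equally well invoke \Cref{trailingprob} directly for the tail, which already packages the Markov step; (ii) when you write $q_{r,h} = \sum_{s \geq r} p_{s,h}$, the sum is implicitly restricted to the parity class allowed by \Cref{selparity}, which you note but might make explicit in the formula. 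Neither affects the validity of the argument.
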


The data reveals the following averages for the $1.2$ million curves corresponding to odd residue classes.

\begin{table}[ht]
    \centering
    \begin{tabular}{|c|c|c|c|}
    \hline
    \textbf{Residue} & \textbf{Empirical} & \textbf{Theoretical} & \textbf{Count}\\ \hline
        $1$ & $1.4511$ & $1.2039$ & $303961$ \\ \hline
        $3$ & $0.8356$ & $1.2309$ & $303961$ \\ \hline
        $5$ & $1.2961$ & $1.3250$ & $303959$ \\ \hline
        $7$ & $1.2830$ & $1.3250$ & $303963$ \\ \hline
    \end{tabular}
    \caption{Average $2$-Selmer rank across odd residue classes}
    \label{avgselmer1}
\end{table}
Empirical averages are much closer in the residue classes $5$ and $7$, whereas in the residue class $3$ it is much less and higher than the theoretical prediction in the class of $1$. 
If, instead, we combine the residue classes in two groups then we get averages that are much closer to the predicted values. 

\begin{table}[ht]
    \centering
    \begin{tabular}{|c|c|c|}
    \hline
    \textbf{Residue} & \textbf{Empirical} & \textbf{Theoretical} \\ \hline
        $1, 3$ & $1.143367$ & $1.2039$  \\ \hline
        $5, 7$ & $1.289563$ & $1.3250$  \\ \hline
    \end{tabular}
    \caption{Average $2$-Selmer rank across two groups of odd residue classes}
    \label{avgselmer2}
\end{table}
In our data, the proportion of $D\equiv 1, 3\pmod{8}$ such that $s(D) = 0$ has come out $0.454390$, where the theoretical prediction is $\frac{1}{3}$. 
Whereas the proportion of $D\equiv 5, 7\pmod{8}$ such that $s(D) = 1$ is equal to $0.856713$, which is much closer to the theoretical prediction of $\frac{5}{6}$. 

\subsection{The Poonen-Rains heuristics}
In this subsection, we compare the results and conjectures of Poonen and Rains \cite{pr} with our empirical data.
Conjecture 1.1 in their paper predicts that, as we vary over all elliptic curves $E$ over $\mathbb{Q}$ ordered by height,

\[\mathrm{Prob}(\dim_{\F_2}\Sel_2(E/\Q) = d) = \left(\prod_{j\geq 0} (1+2^{-j})\right) \left(\prod_{j=1}^d \frac{2}{2^j - 1}\right). \]

The expression is similar to that of \Cref{s=r_thm}, where, the probabilities were calculated for the congruent number curves partitioned into residue classes of $\pmod{8}$. 
However, we calculate these probabilities for all the $1.8$ million congruent number curves in our database. 
The striking observation is that even in this restricted family the probability mass functions are fairly close to the theoretical prediction meant to be valid for all possible elliptic curves; see  \Cref{prpmf}

\begin{figure}[ht]
    \centering
    \includegraphics[width=0.9\linewidth]{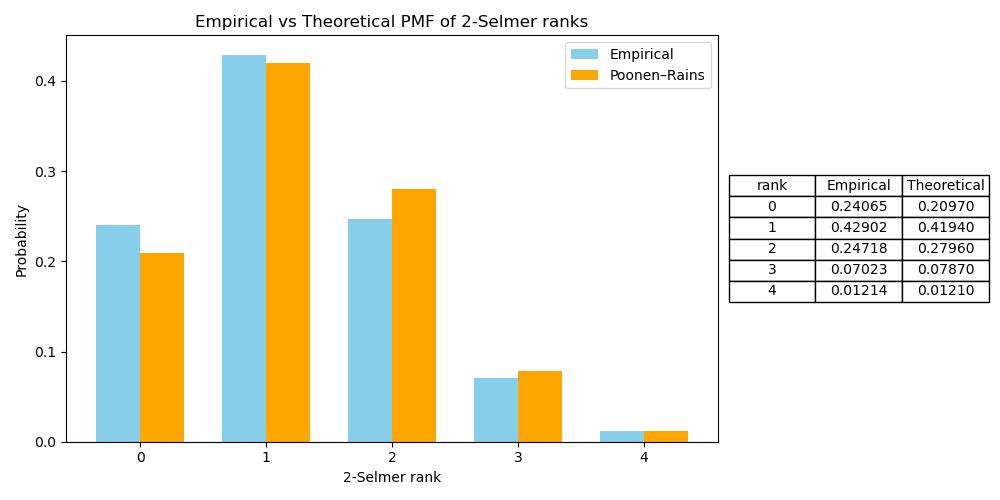}
    \caption{$2$-Selmer probabilities for all congruent number elliptic curves}
    \label{prpmf}
\end{figure}


\begin{prop}{\cite[Proposition 2.22]{pr}}
    Let $m \ge 0$ be a fixed integer. Then we have that the average of the $m$-th moments of the size of $2$-Selmer groups of $E/\Q$, i.e., the average of  $\left(\# \Sel_2(E/\Q)\right)^m$ is given by $\prod_{i=1}^{m} (2^i+1)$ as $E$ varies over all elliptic curves over $\Q$.    
    In particular, the average size of $\Sel_2(E/\Q)$ over all $E/\Q$ is $3$.     
    One also has that the probability that $\dim_{\F_2} \Sel_2(E/\Q) \equiv 0 \pmod 2$ is $1/2$.
\end{prop}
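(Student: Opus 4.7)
The plan is to argue entirely inside the Poonen--Rains probabilistic model. One models $\Sel_2(E/\Q)$ as the intersection $W\cap W'$ of two independent, uniformly random maximal isotropic subspaces in a hyperbolic $\F_2$-quadratic space $V_n$ of dimension $2n$, and passes to the limit $n\to\infty$. The model is motivated by the Poitou--Tate exact sequence, which identifies $\Sel_2(E/\Q)$ with the intersection of two Lagrangian-type subspaces in $H^1(G_\Q,E[2])$ equipped with the Weil pairing.

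To compute the $m$-th moment, fix $W$ and let $W'$ be uniformly random. Expanding,
\[
\mathbb{E}\big[(\#(W\cap W'))^m\big]=\sum_{(v_1,\ldots,v_m)\in W^m}\Pr\!\big[\,\langle v_1,\ldots,v_m\rangle\subset W'\,\big].
\]
Group tuples by the $k$-dimensional subspace $U\subset W$ they span (such a $U$ is automatically isotropic). Writing $T(m,k):=\prod_{i=0}^{k-1}(2^m-2^i)$ for the number of $m$-tuples with a fixed $k$-dimensional $\F_2$-span, and using that the orthogonal group $O(V_n)$ acts transitively on $k$-dimensional isotropic subspaces so that $\Pr[U\subset W']=N(n-k)/N(n)$ with $N(n):=\prod_{i=0}^{n-1}(2^i+1)$ the number of maximal isotropic subspaces, we obtain
\[
\mathbb{E}\big[(\#(W\cap W'))^m\big]=\sum_{k=0}^{\min(m,n)}\binom{n}{k}_{2}\,T(m,k)\,\frac{N(n-k)}{N(n)}.
\]
An elementary asymptotic estimate gives $\binom{n}{k}_{2}\cdot N(n-k)/N(n)\to 2^k/\prod_{j=1}^{k}(2^j-1)$ as $n\to\infty$. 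After rearrangement the limiting $m$-th moment becomes $\sum_{k=0}^{m} 2^{k(k+1)/2}\binom{m}{k}_{2}$, which equals $\prod_{i=1}^{m}(2^i+1)$ by the classical $q$-binomial identity $\sum_{k\ge 0}q^{k(k+1)/2}\binom{m}{k}_{q}=\prod_{i=1}^{m}(1+q^i)$ specialised at $q=2$. The $m=1$ case recovers the stated average size $3$.

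For the parity claim, over $\F_2$ the set of maximal isotropic subspaces of $V_n$ splits into two equal-cardinality $O(V_n)$-orbits (labelled by the Dickson invariant), and $W,W'$ lie in the same orbit precisely when $\dim(W\cap W')\equiv n\pmod 2$. Independence and uniform distribution of $W,W'$ force the two orbit labels to agree with probability exactly $1/2$, so $\Pr[\dim(W\cap W')\equiv 0\pmod 2]=1/2$ for every $n$, and hence in the limit.

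The main obstacle is the middle paragraph: while the $q$-binomial identity is classical, carefully matching the quadratic-space bookkeeping (counts of maximal isotropic subspaces containing a fixed isotropic flag) with the $q$-binomial terms requires some dexterity, especially in justifying term-by-term passage to the limit. A subtler concern, external to the proof but essential to its interpretation, is that the whole approach presumes the Poonen--Rains model accurately represents Selmer-group statistics; that modelling hypothesis is precisely what needs empirical validation, as Figure~\ref{prpmf} of the present paper illustrates.
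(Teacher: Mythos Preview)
The paper does not actually prove this proposition: it is quoted verbatim from \cite{pr} and used only as a theoretical benchmark against which the empirical Selmer data of \Cref{prpmf} and \Cref{prmoments} are compared. There is thus no ``paper's own proof'' to match.

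Your sketch is a faithful outline of the original Poonen--Rains argument. The moment computation is correct: the count $T(m,k)=\prod_{i=0}^{k-1}(2^m-2^i)$ for spanning $m$-tuples, the formula $\Pr[U\subset W']=N(n-k)/N(n)$ via the identification of maximal isotropics containing $U$ with those of $U^\perp/U$, the asymptotic $\binom{n}{k}_2\cdot N(n-k)/N(n)\to 2^k/\prod_{j=1}^k(2^j-1)$, and the final $q$-binomial identity all check out. The parity argument via the two $O(V_n)$-families of Lagrangians (same family iff $\dim(W\cap W')\equiv n\pmod 2$, each family of equal size) is also correct and is exactly how \cite{pr} handles it.

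One small clarification on your closing caveat: the proposition in \cite{pr} is itself a statement \emph{about the model} (a consequence of their Conjecture~1.1), not an unconditional theorem about elliptic curves. So the ``modelling hypothesis'' is not an obstacle to the proof but rather the content of what is being asserted; your proof establishes exactly what is claimed.
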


\Cref{prmoments} shows that when one studies the family of congruent number curves, the moments of their $2$-Selmer sizes are fairly close to those of all elliptic curves over $\Q$. 

\begin{figure}[ht]
    \centering
    \includegraphics[width=0.75\linewidth]{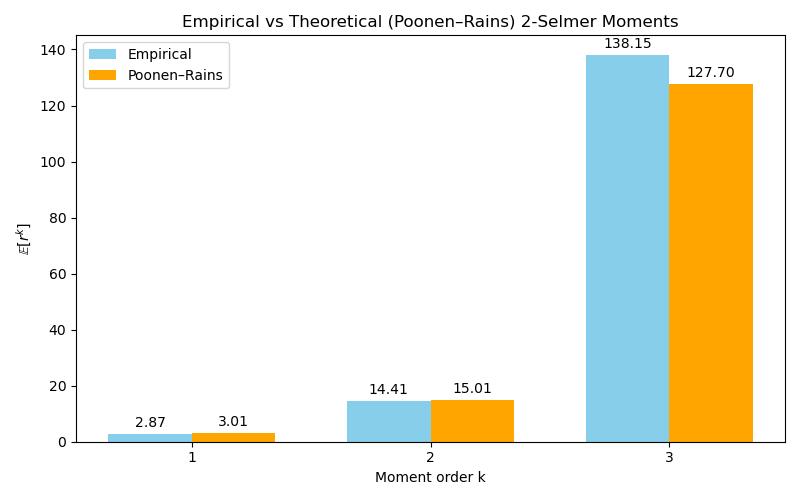}
    \caption{Moments of 2-Selmer size for all congruent number elliptic curves}
    \label{prmoments}
\end{figure}

\subsection{Delaunay Heuristics}\label{delaunay}
We recall the heuristics by Delaunay \cite{del} on the Tate-Shafarevich groups of elliptic curves. 
Let $\Sh(E/\Q)$ denote the Tate-Shafarevich group of $E/\Q$ (just $\Sh$, if the context is clear). 
Delaunay’s heuristics propose a probabilistic model for the distribution of the $p$-primary part of $\Sh[p^\infty]$.
Assuming that $\Sh$ is a finite abelian group with a canonical, nondegenerate, alternating, bilinear pairing, Delaunay adapts the measure on isomorphism classes: instead of counting automorphisms of the group, only symplectic automorphisms (those preserving the pairing) are considered.
In particular, the work conjecturally predicts, for example, the probability that $\Sh$ is cyclic, or isomorphic to a particular group, or that a prime divides its order.
However, for our experiments we focus only on computing probability that the $\F_p$-dimension of the $p$-torsion, $\Sh[p]$, is the given even integer.  
To that effect, we have the following formula \cite[Example F]{del}, see also \cite[Section 3.2]{bhkssw} and \cite[Conjecture 5.1]{pr}. 
\[\mathrm{Prob}(\dim_{\F_p}\Sh[p] = 2n) = p^{-n(2r+2n-1)} \frac{\prod_{i= n+1}^{\infty} (1-p^{-(2r+2i-1)})}{\prod_{i=1}^n(1-p^{-2i})},\]
where $r\in\{0,1\}$ is the MW rank.

\begin{figure}[ht]
    \centering
    \includegraphics[width=0.7\linewidth]{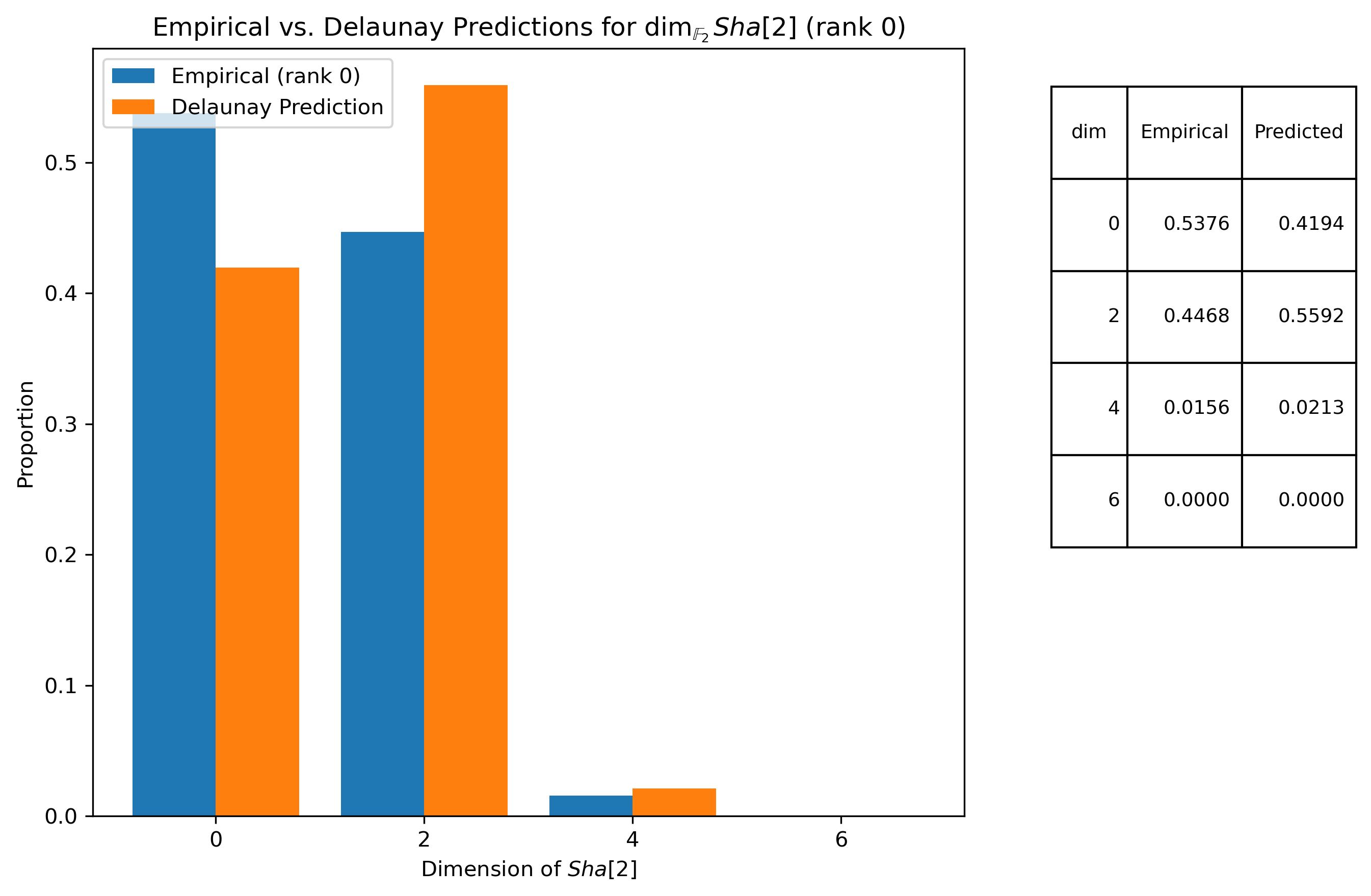}
    \caption{$\Sh[2]$ probabilities for rank $0$ curves}
    \label{sha2r0}
\end{figure}

\begin{figure}[ht]
    \centering
    \includegraphics[width=0.7\linewidth]{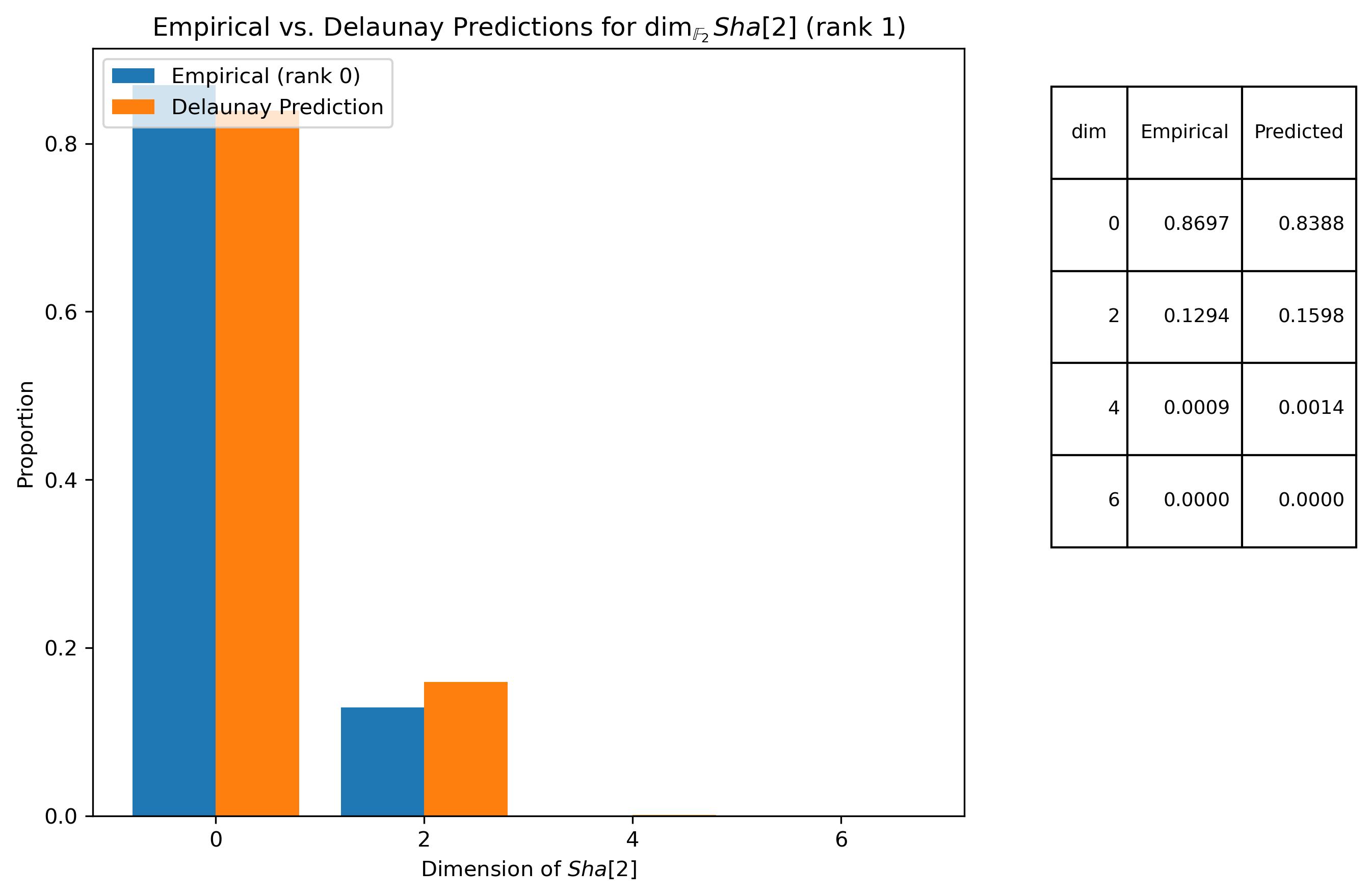}
    \caption{$\Sh[2]$ probabilities for rank $1$ curves}
    \label{sha2r1}
\end{figure}

Recall the fundamental exact sequence \eqref{eq:selshaseq} from which one can calculate the $p$-rank of $p$-torsion of $\Sh$ for congruent number curves: 
\[\dim_{\F_p} \Sh(E_D/\Q)[p] = \dim_{\F_p} \mathrm{Sel}_p(E_D/\Q) - \text{rank } E_D(\Q) - \dim_{\mathbb{F}_p}E(\Q)[p]. \]

For our experiments we consider $p = 2, 3$. 
Empirically, we observe that the Selmer rank and the MW rank have the same parity for congruent number curves. 
Also, recall that the dimension of $E_D(\Q)[2]$ is $2$ and that of $E_D(\Q)[3]$ is zero, since there are no points of order $3$. 
Hence, the rank of $p$-torsion of $\Sh(E_D/\Q)$ is always even. 
We have MW rank calculated for curves corresponding to square-free numbers up to $1$ million. 
We also have $2$-Selmer ranks calculated for these curves. 
However, we have $3$-Selmer ranks of only $59,110$ curves. 
Then for each $p$, we isolate curves into two distinct groups: rank $0$ and rank $1$. 
Then for each group, we compute the corresponding proportions and then compare the data. 
\Cref{sha2r0} shows the comparison of empirical and theoretical probabilities of $\Sh(E_D/\Q)[2]$ ranks in the case of rank $0$ curves. 
Visually it is clear that in the rank $0$ case the distribution of these ranks is not close to theoretical predictions. 
In the rank $1$ case the distributions almost match, as seen in \Cref{sha2r1}. 
The story is not very different in case of $\Sh(E_D/\Q)[3]$ ranks as seen in \Cref{sha3r0} and \Cref{sha3r1}. 
\begin{figure}[ht]
    \centering
    \includegraphics[width=0.7\linewidth]{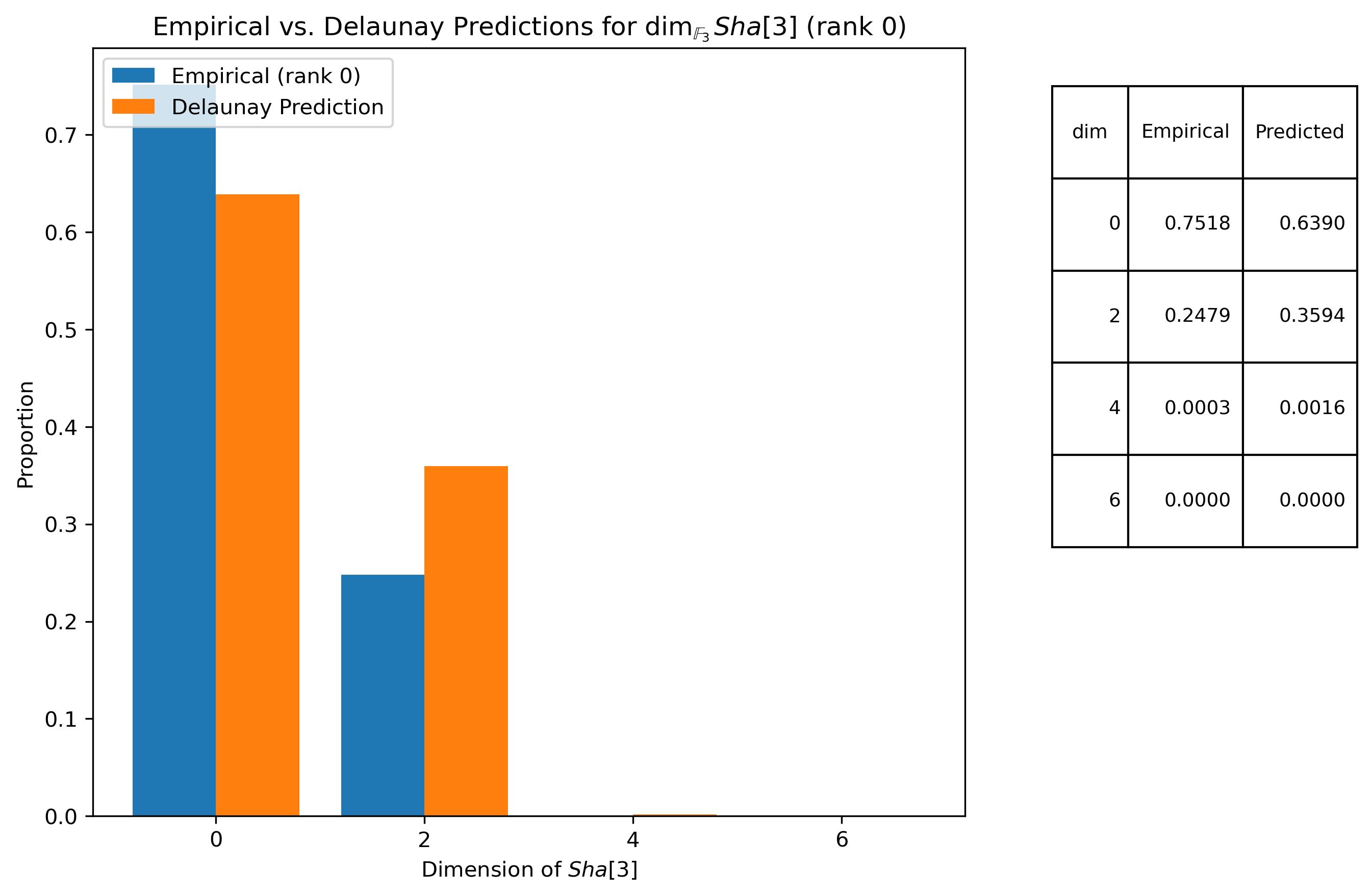}
    \caption{$\Sh[3]$ probabilities for rank $0$ curves}
    \label{sha3r0}
\end{figure}

\begin{figure}[ht]
    \centering
    \includegraphics[width=0.7\linewidth]{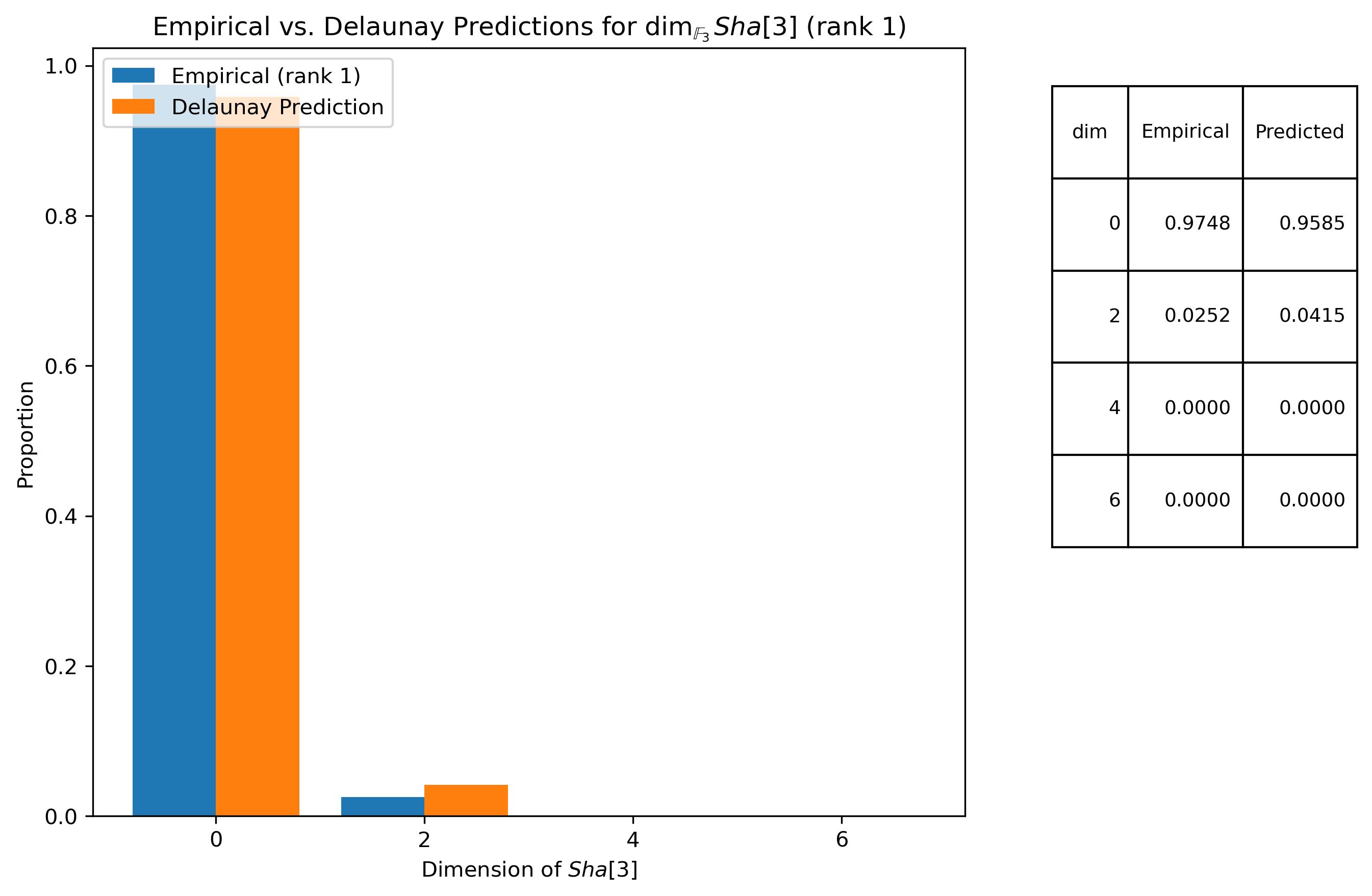}
    \caption{$\Sh[3]$ probabilities for rank $1$ curves}
    \label{sha3r1}
\end{figure}
We note that the other heuristics conjectured by Delaunay, about the Tate-Shafarevich group itself, are difficult to verify, since one cannot determine that group exactly. 
However, interested reader can devise experiments using analytically computed orders, assuming BSD.
A similar treatment can be found in \cite[Section 4.3]{shaorder}.
Our dataset currently has analytic order of $\Sh$ computed for curves corresponding to numbers up to $1$ million.

\section{The Goldfeld conjecture and the BSD conjecture}\label{gold-smith}
\subsection{Goldfeld conjecture}
For an elliptic curve $E:y^2=x^3+ax+b$ over $\Q$ and $d \in \Q^*/\Q^{*2}$, denote by $E^{(d)}:y^2=x^3+ad^2x +bd^3$ the quadratic twist of $E$ by $d$. Recall that the $L$-function associated to an elliptic curve $E$ with conductor $N_E$ is given by 
\[L_E(s) = \prod_{p \nmid N_E} (1-a_p p^{-s} + p^{1-2s})^{-1} \prod_{p \mid N_E} (1-a_p p^{-s})^{-1}\]
and that the order of vanishing of $L_E(s)$ at $s=1$ is called the \emph{analytic rank} of $E$, denoted $r_{an}(E)$. 
The Goldfeld conjecture \cite{gold} asserts that in the family $\mathcal{E} := \left\{E^{(d)}\right\}$, the analytic rank $r_{an}(E^{(d)})=0$ (resp. $r_{an}(E^{(d)})=1$) for $50\%$ of $d$'s. The occurrences of $r_{an}(E^{(d)}) \ge 2$, although infinitely often, should only account for $0\%$ of $d$'s. More precisely,
\begin{conj}[Goldfeld]
    For $X>0$ and a fixed elliptic curve $E/\Q$, let
    \[N_r(X):= \left|\left\{|d|<X \mid  r_{an}(E^{(d)}) = r\right\}\right|.\]
    Then for $r \in \{0,1\}$,
    \[N_r(X) \sim \frac{1}{2} \sum_{|d|<X} 1, \text{ as } X \to \infty.\]
\end{conj}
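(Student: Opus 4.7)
The plan is to decompose the problem by the sign of the functional equation and then separately control the proportion of analytic rank $\geq 2$ in each sign class. First, for square-free $d$ lying in a fixed residue class modulo the conductor of $E$, the root number $\omega_{E^{(d)}} = \pm 1$ can be computed explicitly in terms of local invariants and is equidistributed between $\pm 1$ once one averages over all admissible residues (the Birch-Stephens calculation in the congruent number case is the prototype). Since the functional equation forces $r_{an}(E^{(d)}) \equiv (1-\omega_{E^{(d)}})/2 \pmod 2$, this alone gives the $50/50$ parity split of $r_{an}$; the remaining content of the conjecture is that, within each parity class, the contribution of twists with $r_{an} \geq 2$ (respectively $\geq 3$) is of density zero.

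I would approach the density-zero statement in two stages. Stage one is to bound the average $2$-Selmer rank using the ideas of Heath-Brown \cite{hb,hb1} together with their extension beyond the congruent number family by Kane and by Swinnerton-Dyer. The inequality $r(E^{(d)}) \leq \dim_{\F_2} \mathrm{Sel}_2(E^{(d)}/\Q) - \dim_{\F_2} E^{(d)}(\Q)[2]$ reduces the problem to controlling the $2$-Selmer rank, and the Poonen-Rains distribution \cite{pr} describes its limiting behavior. This is by itself insufficient: the $2$-Selmer rank exceeds its minimum value with positive (albeit small) probability, so the chain $r_{an} \to r \to r_2$ must be tightened. Stage two, following the breakthrough of Smith \cite{smith2016congruent}, is to study the entire $2$-power Selmer tower and to show that the $2^\infty$-Selmer rank equals $0$ or $1$ according to the sign of $\omega_{E^{(d)}}$ for $100\%$ of $d$. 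The crucial input is equidistribution of higher iterates of the Cassels-Tate pairing, which can be rewritten as equidistribution of iterated Rédei reciprocity symbols attached to the prime divisors of $d$. Combining this with the Gross-Zagier-Kolyvagin theorem and the $2$-parity and $2$-converse results then yields $r_{an}(E^{(d)}) \leq 1$ for density-one $d$ in each parity class, completing the conjecture.

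The main obstacle is precisely the Rédei-symbol equidistribution of stage two, whose present form demands that $E[2](\Q)$ be large enough to furnish a genuine descent field at each step of the tower. For the congruent number family treated in this paper $E[2](\Q)$ is full, so Smith's method does give the rank $0$ part of Goldfeld's conjecture (and substantial progress on the rank $1$ part) unconditionally; but for a general $E$ with trivial rational $2$-torsion the entire Galois-theoretic framework must be replaced by an analogue over a larger number field, and constructing such an analogue together with the necessary sieving estimates is exactly why the conjecture is still open in full generality. A secondary obstacle is that even Smith's machinery produces a qualitative density-one statement rather than the sharper asymptotic $N_r(X) \sim \tfrac{1}{2}\sum_{|d|<X} 1$ predicted, so pinning down the error term would require a substantial strengthening of the analytic inputs underlying the stage-two equidistribution.
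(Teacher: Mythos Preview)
The statement you are addressing is a \emph{conjecture}, not a theorem: the paper presents Goldfeld's Conjecture as an open problem and offers no proof, only a brief survey of partial results (Smith's announced proof conditional on BSD for curves with full rational $2$-torsion, and the unconditional Weak Goldfeld results of Smith and Tian--Yuan--Zhang for the congruent number family) followed by empirical tests on the dataset. There is therefore no proof in the paper to compare your proposal against.

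Your proposal is not really a proof but a well-informed sketch of the known strategy and its limitations, and as such it is accurate. You correctly isolate the two layers: the $50/50$ parity split via root numbers (which is elementary once the Birch--Stephens type calculation is in hand), and the hard density-zero statement for $r_{an}\geq 2$ within each parity class. You also correctly identify Smith's $2^\infty$-Selmer method as the deepest available tool and name its two principal obstructions --- the need for sufficiently large rational $2$-torsion to run the R\'edei-symbol equidistribution, and the fact that the output is a density-one statement rather than the sharper asymptotic $N_r(X)\sim\tfrac{1}{2}\sum_{|d|<X}1$. In short, you have not proved the conjecture (nor has anyone, unconditionally and in full generality), but your account of why it remains open is consistent with the paper's own summary and with the current state of the literature.
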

 Smith \cite{smithgoldfeld} announced a proof of the Goldfeld's conjecture, conditional to BSD, for curves with full $2$-torsion. Unconditionally, for the families of CM elliptic curves, we know that the conjecture holds for $r=0$ when the CM field is not $\Q(\sqrt{-2})$ and for $r=1$ when $2$ splits in the CM field. Thus, the Goldfeld conjecture holds for the family of curves containing a curve of conductor $49$. However, for all other quadratic twist families of elliptic curves, the full strength of this conjecture is not known yet. Therefore, one often considers a weaker version, by replacing $50\%$ by any positive proportion.
 \begin{conj}[Weak Goldfeld]
     For $r \in \{0,1\}$, \[\liminf_{X\to\infty}\frac{N_r(X)}{|\{ |d|<X\}|}  >0.\]
 \end{conj}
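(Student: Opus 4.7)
The plan is to attack Weak Goldfeld via 2-descent combined with CM methods, since the congruent number family has complex multiplication by $\Z[i]$ and the 2-Selmer rank is relatively tractable compared to the analytic rank. The three-stage strategy I would pursue is: (i) establish a positive proportion of $D$ with small 2-Selmer rank, (ii) transfer this to Mordell-Weil rank via $r(D) \le s(D)$, (iii) upgrade to analytic rank via non-vanishing of the central L-value or its first derivative.

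For the case $r = 0$, I would begin with Heath-Brown's Corollary 1.2 in \cite{hb}, which asserts that $s(D) = 0$ for at least one-third of $D \equiv 1, 3 \pmod{8}$. Since $r(D) \le s(D)$, this forces $r(D) = 0$ for a positive proportion of such $D$. The root number $\omega_D = +1$ in these residue classes forces $r_{an}(D)$ to be even. The remaining and most delicate task is to exclude $r_{an}(D) \ge 2$, which amounts to showing $L(E_D, 1) \neq 0$ for a positive proportion of such $D$. The plan here is to exploit the CM by $\Z[i]$: factor $L(E_D, s)$ as a Hecke L-series over $\Q(i)$ and apply Rubin-type non-vanishing theorems for Hecke L-values along an appropriate family of Gr\"ossencharacters.

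For the case $r = 1$, the strategy is analogous. A parallel Heath-Brown density estimate shows $s(D) = 1$ for a positive proportion of $D \equiv 5, 7 \pmod{8}$, and $\omega_D = -1$ here forces $r_{an}(D)$ to be odd, hence $r_{an}(D) \ge 1$. To pin down $r_{an}(D) = 1$ exactly, I would invoke a Waldspurger-style formula expressing $L'(E_D, 1)$ in terms of Fourier coefficients of a weight $3/2$ modular form, together with Iwaniec-style non-vanishing results for those coefficients along arithmetic progressions. Once $L'(E_D, 1) \neq 0$ is secured for a positive proportion, the Gross-Zagier formula combined with Kolyvagin's Euler system converts this into $r(D) = r_{an}(D) = 1$, closing the loop.

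The principal obstacle is precisely step (iii): converting Selmer-level information into L-value non-vanishing for a positive proportion of the family. Descent supplies the algebraic rank upper bound essentially for free, but an unconditional positive-proportion statement about $L(E_D, 1)$ or $L'(E_D, 1)$ requires genuine analytic input, which here is only available thanks to the CM structure. For a generic (non-CM) quadratic twist family there is no analogous analytic handle, and even Weak Goldfeld remains open — which is why Smith's conditional resolution of the full Goldfeld conjecture in \cite{smithgoldfeld} (assuming BSD, for curves with full rational 2-torsion) is regarded as a landmark contribution.
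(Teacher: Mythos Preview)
The statement you are attempting to prove is labeled a \emph{conjecture} in the paper, and the paper does not supply a proof. Immediately after stating it, the paper simply records that Heath-Brown \cite{hb2} established Weak Goldfeld conditionally on GRH, and that Smith \cite{smith2016congruent} and Tian--Yuan--Zhang \cite{tyz} proved it unconditionally for the congruent number family. There is therefore no ``paper's own proof'' to compare your proposal against; the paper is a data-scientific survey and treats this result as background, not as something it establishes.

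That said, your sketch is a reasonable high-level outline of ingredients that enter the known arguments for the congruent number family, but it contains a structural redundancy worth flagging. In your $r=0$ case, once you secure $L(E_D,1)\neq 0$ for a positive proportion of $D$ (your step (iii)), Coates--Wiles already yields $r(D)=r_{an}(D)=0$ directly, so the Selmer input from steps (i)--(ii) becomes superfluous. Conversely, $s(D)=0$ alone gives $r(D)=0$ but says nothing about $r_{an}(D)$ without assuming BSD. Thus (i)--(ii) and (iii) are parallel rather than sequential routes, and the entire content of the argument lies in (iii), as you correctly identify. The actual published proofs proceed rather differently from your outline: Smith controls the full $2^\infty$-Selmer distribution and combines it with Waldspurger-type formulae, while Tian--Yuan--Zhang construct Heegner points and argue by induction on the number of prime factors of $D$. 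Neither relies on the ``Rubin-type non-vanishing for Hecke $L$-values'' step you propose, which in the generality you need is not available off the shelf.
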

Heath-Brown \cite{hb2} proved that the weak Goldfeld holds, conditional to GRH. The recent works of Smith \cite{smith2016congruent} and Tian-Yuan-Zhang \cite{tyz} showed that it is true for the congruent number curves family.

We now see what the data shows regarding distribution of analytic ranks. 
These ranks are calculated using both Sage and Magma functions. 
We use the \texttt{rank()} function of Sage twice with two different algorithm input parameters. 
First we set the algorithm to \emph{all}, that forces the function to use all the three, the PARI library function, Watkins' \emph{sympow} method and Rubenstein's \emph{lcalc} method to calculate the ranks and return the common answer. 
Second, we set the algorithm to \emph{magma}, that calls the function in MAGMA. 
We compute an upper bound for the analytic rank using Sage's \texttt{analytic\_rank\_upper\_bound()} function. 
We also compute the root number of each elliptic curve. 
In case the outputs from all the (four) algorithms match then declare it as the analytic rank. 
Otherwise, we use either the root number or the calculated upper bound or even the MW rank, if needed. 
However, the calculations done so far imply that all rank algorithms give the same value. 

Out of $98,400$ curves, there are $43,529$ curves of rank $0$, $48,917$ curves of rank $1$, $5,665$ curves have rank $2$, $285$ curves of rank $3$ and exactly $4$ are of rank $4$. 
A bar chart of frequency distribution is shown in \Cref{anarank}

\begin{figure}[ht]
    \centering
    \includegraphics[width=0.5\linewidth]{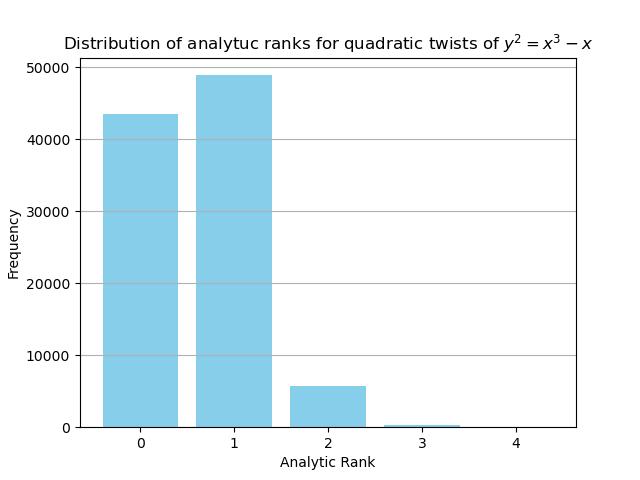}
    \caption{Frequency distribution of analytic ranks}
    \label{anarank}
\end{figure}

It is clear that in the available empirical data the proportion is not $50/50$ as predicted by Goldfeld. 
\Cref{running} shows how for each rank $0$ and $1$ the running proportions. 
For rank $1$ curves the proportion quickly approaches $0.49$ for rank $0$ curves it approaches $0.46$. 
We employ two statistical tests, the $\chi^2$-test and the Fischer's test to see how the observed split between analytic ranks $0, 1$ deviate from the $50/50$ split predicted by the conjecture. 

\begin{figure}[ht]
    \centering
    \includegraphics[width=0.5\linewidth]{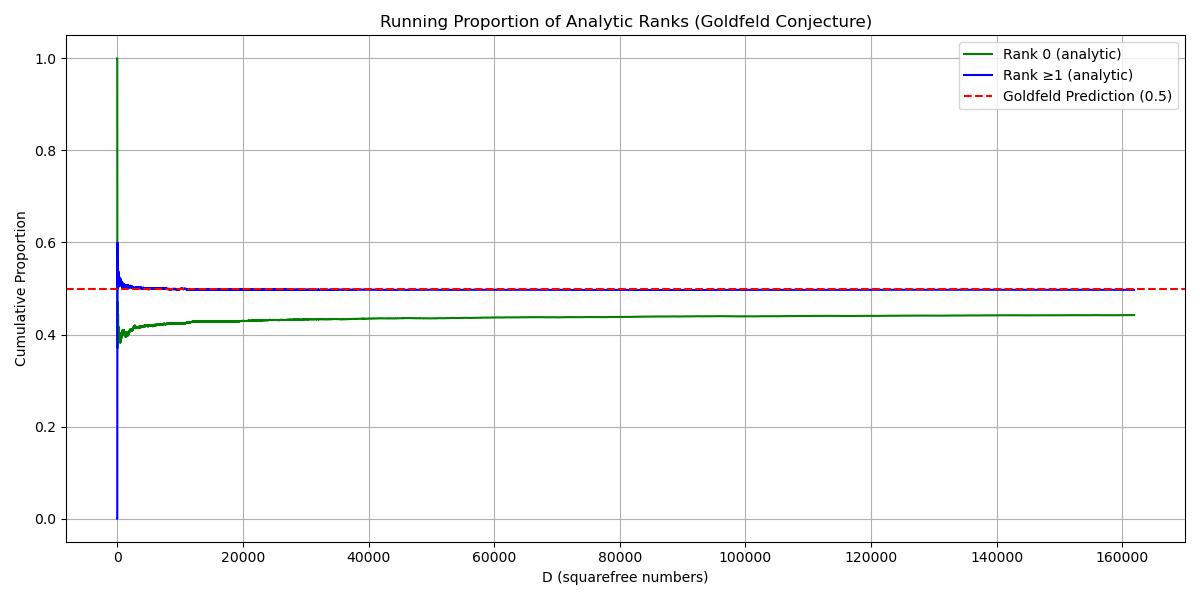}
    \caption{Proportions of analytic ranks with increasing value of $D$}
    \label{running}
\end{figure}

The following table shows the actual empirical split with the growing sample size and the $p$-values corresponding to each of the two tests. 
Recall that, the $p$-value is the probability that we would see a result as extreme as the one we observed, assuming that the Goldfeld's conjecture is true. 

\begin{table}[ht]
    \centering
    \begin{tabular}{|c|c|c|c|c|}\hline
    $D$ maximum & sample size & rank $0$ & rank $1$ & $p$-value \\ \hline
        $16446$ & $10000$ & $4280$ & $5720$ & $0$ \\ \hline
        $32905$ & $20000$ & $8670$ & $11330$ & $0$ \\ \hline
        $49337$ & $30000$ & $13058$ & $16942$ & $0$ \\ \hline
        $65797$ & $40000$ & $17510$ & $22490$ & $0$ \\ \hline
        $82257$ & $50000$ & $21932$ & $28068$ & $0$ \\ \hline
    \end{tabular}
    \caption{The table of proportions and $p$-values}
    \label{goldcomparison}
\end{table}

Both the statistical tests imply that assuming Goldfeld's conjecture is correct, such an uneven split in ranks in the empirical data would almost never happen — unless something else is going on.
Hence, we are still in the finite-sample regime, and the convergence to $50/50$ hasn’t kicked in.
Hence, we ran the experiments using the MW ranks (since BSD would imply that both the ranks are equal) which were calculated for more than $600000$ curves. 
However, there is no difference, the deviation is still large and the distributions are far from the predicted $50/50$ split.

\subsection{Density of congruent numbers and the BSD conjecture}
Most results about proportions of congruent numbers and Selmer ranks were mostly in the form of heuristics. 
However, recent seminal work of Smith \cite{smith2016congruent} converts some of the heuristics and conjectures into provable theorems. 
Intuitively, it is shown that congruent numbers aren't rare or exceptional — in fact, they have positive density among square-free integers.
It is also proved that for a large subfamily of congruent number curves, one can verify the full BSD conjecture, even though it remains unproven in general.
It is one of the first unconditional density results of this kind. 
The main technique involves translating the problems to probabilistic behavior of matrices over $\mathbb{F}_2$. 
Using our empirical data we verify two of his results, the first is about the BSD conjecture and the other is about the density of congruent numbers. 

We begin by introducing the BSD formula in the context of congruent number curves. 

\begin{defn}
    The normalized BSD quantity of a congruent number (CN) elliptic curve $E_D$ is defined as 
    \[\mathcal{L}(E_D) := \frac{16\cdot  L(1, E_D)}{\Omega(E_D)\cdot \prod_p c_p}, \]
    where $L(1, E_D)$ is the special value, $16$ is the square of the torsion order (which is uniformly $4$ for all the CN curves), $\Omega(E_D)$ is the real period and $c_p$ are the local Tamagawa factors. 
\end{defn}
Note that $\mathcal{L}(E_D)$ is nothing but the analytic order of Sha. 
A weak form of the full BSD conjecture can be stated for congruent number curves as follows; this is the main theorem of Smith. 
\begin{thm}{\cite[Theorem 1.2]{smith2016congruent}}\label{bsdthm}
    Let $E_D$ be a congruent number curve corresponding to a square-free integer $D$. 
    The normalized BSD quantity $\mathcal{L}(E_D)$ is an odd integer if and only if the $2$-Selmer group is generated by the rational $2$-torsion subgroup of $E_D$. 
\end{thm}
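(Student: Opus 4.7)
The plan is to convert the assertion into a comparison between two arithmetic invariants of $E_D$ via the BSD formula, which is accessible because $E_D$ has complex multiplication by $\Z[i]$.

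\emph{Translation of the Selmer side.} Since $E_D(\Q)_{\mathrm{tors}} \cong (\Z/2\Z)^2$ and $E_D(\Q)$ contains no rational $4$-torsion, the Kummer map embeds $E_D(\Q)[2]$ as a subgroup of order $4$ in $S_2(D)$. Hence $S_2(D)$ being generated by rational $2$-torsion is equivalent to $\#S_2(D) = 4$, i.e.\ $s(D) = 0$. Feeding this into the fundamental exact sequence \eqref{eq:selshaseq} for $n = 2$ and using $r(D) \le s(D)$, the condition $s(D) = 0$ is equivalent to the conjunction $r(D) = 0$ and $\Sh(E_D/\Q)[2] = 0$.

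\emph{Translation of the analytic side.} Since $E_D$ is the quadratic twist of $E_1 : y^2 = x^3 - x$ by $D$, it has CM by $\Z[i]$. For such curves the full BSD formula in the analytic rank $0$ setting, including the $2$-primary part, is available through Coates--Wiles combined with Rubin's work on the Iwasawa main conjecture for imaginary quadratic fields. Whenever $L(E_D, 1) \ne 0$, this yields $r(D) = 0$ and, after inserting $\#E_D(\Q)_{\mathrm{tors}}^2 = 16$ into the BSD formula,
\[
\mathcal{L}(E_D) \;=\; \#\Sh(E_D/\Q);
\]
in particular $\mathcal{L}(E_D)$ is then a positive integer.

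With the two translations in place, the forward implication is immediate: an odd $\mathcal{L}(E_D)$ is nonzero, hence $L(E_D,1) \ne 0$, hence $r(D) = 0$, and $\#\Sh(E_D/\Q) = \mathcal{L}(E_D)$ is odd, so $\Sh[2] = 0$; the first step then delivers $s(D) = 0$. The converse is where the real work lies: from $s(D) = 0$ we immediately get $r(D) = 0$ and $\Sh[2] = 0$, but we still have to rule out $L(E_D, 1) = 0$ (a converse to Coates--Wiles) before BSD can even be invoked. The main obstacle is precisely this $2$-adic control of $L(E_D,1)$. One route is to appeal to Rubin's $2$-adic main conjecture for CM elliptic curves, where triviality of $\Sh[2]$ together with trivial MW rank pins down the $2$-adic valuation of the algebraic part of $L(E_D,1)$ and in particular forces it to be finite. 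A more hands-on alternative, closer to Smith's actual argument and to the descent spirit of the paper, is to use Tunnell's explicit theta-series expression for $L(E_D,1)$ and relate its $2$-adic valuation directly to the rank of Monsky's $\F_2$-descent matrix that computes $s(D)$, so that $s(D) = 0$ forces the theta-series count to be $2$-adically minimal, hence nonzero.
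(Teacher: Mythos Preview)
The paper does not actually prove this theorem; it is stated as a citation of Smith's result \cite[Theorem 1.2]{smith2016congruent} and is then subjected only to empirical verification on the dataset. So there is no ``paper's own proof'' to compare against, and your proposal must be judged on its own.

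Your proposal contains a genuine gap, and it lies exactly where you appeal to Rubin. You write that ``the full BSD formula in the analytic rank $0$ setting, including the $2$-primary part, is available through Coates--Wiles combined with Rubin's work.'' This is not correct for the congruent number family. Rubin's main-conjecture results for imaginary quadratic fields exclude the prime $2$ (which ramifies in $\Q(i)$), so they do \emph{not} give the $2$-part of $\#\Sh(E_D/\Q)$, and in particular they do not yield the exact equality $\mathcal{L}(E_D) = \#\Sh(E_D/\Q)$ that you invoke in both directions of the argument. Establishing precisely this $2$-adic equality (or enough of it) is the actual content of Smith's theorem and of the Tian--Yuan--Zhang line of work; assuming it makes the argument circular. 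Concretely, in your forward direction the step ``$\#\Sh(E_D/\Q) = \mathcal{L}(E_D)$ is odd, so $\Sh[2] = 0$'' already presupposes the $2$-part of BSD; without it you only know that $\mathcal{L}(E_D)$ and $\#\Sh$ agree up to a power of $2$, which says nothing about oddness.

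Your second paragraph's ``more hands-on alternative'' --- Tunnell's explicit theta-series expression for $L(E_D,1)$ via Waldspurger, and a direct comparison of its $2$-adic valuation with the corank of Monsky's $\F_2$-descent matrix --- is in fact the correct route and is essentially what Smith does (building on Zhao and Tian). That line is not an optional alternative to the Rubin argument; it is the argument. If you want a complete proof, drop the Rubin paragraph entirely and flesh out the Tunnell--Monsky comparison: write $\mathcal{L}(E_D)$ as the square of Tunnell's coefficient, express that coefficient $2$-adically via the Gauss-sum/Monsky-matrix machinery, and show its $2$-valuation vanishes exactly when the Monsky matrix has full rank, i.e.\ when $s(D)=0$.
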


In order to computationally verify above result we compute the BSD parameters, Tamagawa product, regulator, special value and the real period for the congruent number curves using Sage Math. 
Our current database consists of $195,800$ curves and their $4$ BSD parameters. 
For each curve we compute its normalized BSD quantity and round it off to the nearest integer. 
We add a boolean column called \emph{L\_BSD\_odd} to the data frame, whose value is set to True if the corresponding BSD quantity is odd and False otherwise. 
The result is verified if the corresponding $2$-Selmer rank is $2$. 
Empirically, the result holds for almost $25\%$ of the curves. 
\Cref{bsdv1} shows the proportion of curves per residue classes for which \Cref{bsdthm} holds. 
It holds for $31.6\%$ curves in $1\pmod{8}$, for $54.74\%$ curves in $2\pmod{8}$ and for $61.65\%$ of the curves in $3\pmod{8}$ residue class. 
As expected, the result doesn't apply to the curves in the remaining residue classes. 

\begin{figure}[ht]
    \centering
    \includegraphics[width=0.7\linewidth]{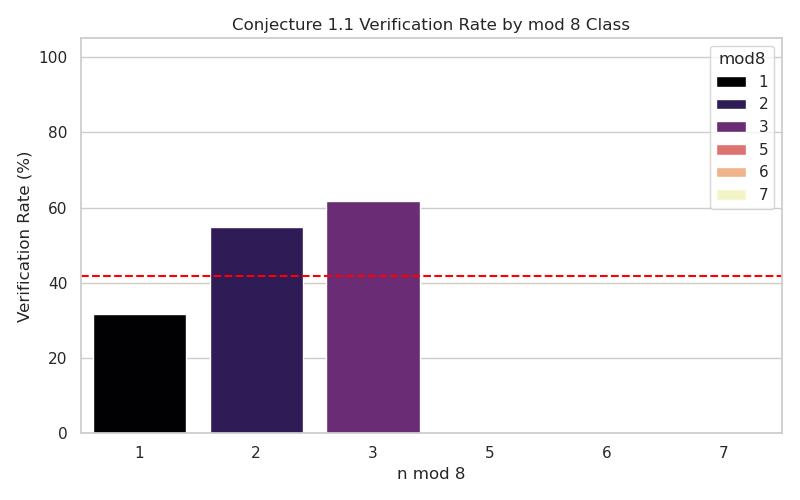}
    \caption{BSD verification across residue classes}
    \label{bsdv1}
\end{figure}

Recall that the torsion subgroup of $E_D(\Q)$ is $\mathbb{Z}/2\times \mathbb{Z}/2$. 
Hence, the $2$-Selmer rank is $2$ and equivalently the MW rank is $0$. 
This implies that the theorem doesn't apply to those $D$ which are congruent to $5,6,7\pmod{8}$. 
Further, combining Waldspurger's formula with random matrix models, Smith concludes that the condition that the Selmer group is generated by $2$-torsion also implies that the full BSD holds. 
In particular, he proves the following

\begin{cor}{\cite[Corollary 1.3]{smith2016congruent}}\label{bsdcor}
    The congruent number curve $E_D$ satisfies the full BSD conjecture if the corresponding Selmer group is generated by the torsion subgroup. 
    In particular, the curve $E_D$ satisfies full BSD for at least $41.9\%$ of square-free positive $D\equiv 1, 2, 3\pmod{8}$. 
\end{cor}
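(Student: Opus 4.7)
The plan is to combine Theorem \ref{bsdthm} with the Heath-Brown distribution result \Cref{s=r_thm} in two stages: first deduce that the Selmer hypothesis forces full BSD, and then count how often this hypothesis holds.

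For stage one, write $s(D) = \dim_{\F_2} S_2(D) - 2$ as in \Cref{seldist}. The assumption that $S_2(D)$ is generated by the image of the rational $2$-torsion is equivalent to $s(D) = 0$. Specialising \eqref{eq:selshaseq} to $n = 2$ gives
\begin{equation*}
0 \to E_D(\Q)/2E_D(\Q) \to S_2(D) \to \Sh(E_D/\Q)[2] \to 0.
\end{equation*}
Since $E_D(\Q)[2] \cong (\Z/2\Z)^2$, the left term has $\F_2$-dimension $r(D) + 2$, and $\dim_{\F_2} S_2(D) = 2$ by hypothesis. Comparing dimensions forces both $r(D) = 0$ and $\Sh(E_D/\Q)[2] = 0$. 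Theorem \ref{bsdthm} then tells us $\mathcal{L}(E_D)$ is an odd positive integer; in particular $L(1, E_D) \neq 0$, so $r_{an}(D) = 0$, matching the algebraic rank. Now $E_D$ is a quadratic twist of the CM elliptic curve $y^2 = x^3 - x$, so Rubin's theorem on the Iwasawa main conjecture for CM elliptic curves gives the $p$-part of BSD for every odd prime $p$. The $2$-part coincides trivially: algebraically $\Sh(E_D/\Q)[2] = 0$, while the analytic order $\mathcal{L}(E_D)$ is odd, so both $2$-adic valuations vanish.

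For stage two, apply \Cref{s=r_thm} with $r = 0$, which yields
\begin{equation*}
\mathrm{Prob}\bigl(s(D) = 0 \mid D \equiv h \pmod{8}\bigr) \;=\; \prod_{n=1}^{\infty} \frac{1}{1 + 2^{-n}} \;\approx\; 0.41942\ldots
\end{equation*}
for $h \in \{1, 3\}$. The analogous statement for the even residue class $h = 2$ is established in Smith's work by adapting the Monsky-style $2$-descent bookkeeping to even square-free $D$, and produces the same limiting density. Averaging over $h \in \{1, 2, 3\}$, at least $\prod_{n \geq 1}(1 + 2^{-n})^{-1} \approx 41.9\%$ of square-free positive $D \equiv 1, 2, 3 \pmod{8}$ satisfy $s(D) = 0$, and every such $D$ satisfies full BSD by stage one.

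The main obstacle is the $2$-primary part of stage one. Rubin's machinery naturally addresses only odd primes, and in general producing the $2$-part of BSD for an individual curve is hard. The key trick, where Smith's contribution enters, is that the hypothesis ``$S_2(D)$ generated by torsion'' is so restrictive that the $2$-descent itself certifies the vanishing of $\Sh[2]$, and combined with Waldspurger's formula (which expresses $L(1, E_D)$ as the square of an integral Fourier coefficient of a half-integer weight form) it forces the $2$-adic valuations on both sides of the BSD identity to match without any further input. A secondary technical point is verifying the density statement for $h = 2$, which requires redoing the Monsky matrix analysis associated to $y^2 = x^3 - 4{D'}^2 x$ (where $D = 2D'$) rather than merely quoting the odd-$D$ case.
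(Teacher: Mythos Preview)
The paper does not give its own proof of this corollary: it is quoted directly from \cite{smith2016congruent} with only the one-sentence gloss ``combining Waldspurger's formula with random matrix models'' immediately before the statement, followed by purely empirical verification. There is therefore no detailed argument in the paper to compare against.

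Your two-stage sketch is consistent with that gloss and fills it in correctly. Stage one is sound: $s(D)=0$ forces $r(D)=0$ and $\Sh(E_D/\Q)[2]=0$ via \eqref{eq:selshaseq}; \Cref{bsdthm} then makes $\mathcal{L}(E_D)$ an odd positive integer, so $r_{an}(D)=0$; Rubin's main-conjecture work for CM elliptic curves supplies the $p$-part of BSD for every odd $p$ in the rank-$0$ case (an ingredient the paper's one-line gloss omits but which is indeed how Smith proceeds); and the $2$-part matches because $\Sh[2]=0\Rightarrow\Sh[2^\infty]=0$ while $\mathcal{L}(E_D)$ is odd. Your ``main obstacle'' paragraph slightly overstates the difficulty here --- once you invoke \Cref{bsdthm} as a black box, the $2$-adic matching is already done, and Waldspurger's role is entirely internal to the proof of \Cref{bsdthm} itself rather than an extra step in the corollary.

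Stage two is exactly the ``random matrix models'' input the paper alludes to: Heath-Brown's limiting density $\prod_{n\ge 1}(1+2^{-n})^{-1}\approx 0.41942$ in \Cref{s=r_thm} is derived from the kernel statistics of random alternating matrices over $\F_2$. Your flag that the even class $h=2$ requires a separate Monsky-style analysis is accurate and honest; that case is handled in Smith's paper but not in the present one.
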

In order to verify the above result, one needs to calculate the how many curves in the residue classes $1, 2, 3$ have Selmer rank exactly equal to $2$?
In our database there are $97893$ curves with even Selmer rank, out of those $48305$ curves are of rank $2$. 
This is slightly more than $49\%$. 
\Cref{bsdv1} shows the proportion across the three residue classes. 
In the main databse as well we have similar figures, see for example \Cref{sel_distro}.

Smith also proves that a positive proportion (not all) of positive integers $\equiv 5, 6, 7\pmod{8}$ are congruent numbers \cite[Theorem 1.5]{smith2016congruent}. 
In particular, at least $62.9\%$ of square-free $D\equiv 5, 7\pmod{8}$ are congruent numbers and at least $41.9\%$ of square-free $D\equiv 6\pmod{8}$ are congruent numbers. 
However, our database shows that $100\%$ of these numbers are congruent, consistent with the conjecture, which states that all such numbers are indeed congruent. 

Finally, as for the case, when one considers only those $D$ which are primes and either $5$ or $7 \pmod 8$, these are known to be congruent. This result is verified using our data of $39292$ curves $E_D$ with $D$ a prime $\equiv 5,7 \pmod 8$. The rank of $E_D(\Q)$ was observed to be positive in all the cases. On the other hand, when $D$ is a prime congruent to $3 \pmod 8$, then $D$ is non-congruent is well-known. We verify this result using our dataset of $19653$ values of such $D$. For all of these curves where $D \equiv 3 \pmod 8$ is a prime, the rank of $E_D(\Q)$ is $0$.

\section{The average value of Frobenius traces}\label{sec:murmurations}
In a recent work \cite{mumur01} He, Lee, Oliver and Pozdnyakov  discovered an unexpected and visually striking pattern that appears when one averages the Frobenius traces over a large collection of elliptic curves.
They called this pattern “murmurations of elliptic curves” due to the resemblance of their graphs to the swarming patterns of flocks of starlings.
To the best of our knowledge, the average value of Frobenius traces of congruent number curves (or in general in any quadratic twist family) has not been studied. 
Hence, we devised an experiment to compute Frobenius traces of these curves and calculate the average for each prime. 
In this section, we describe our findings. 

For a prime $p \nmid 2D$, denote by $\tilde{E}_D$ the reduced curve over the finite field $\F_p$. 
Then $\#\tilde{E}_D(\F_p) = p+1-a_p$, where $a_p=a_p(E_D)$ is the trace of Frobenius of $E_D$ modulo $p$. 
It is known, in general, that $|a_p| \le 2\sqrt{p}$, and in particular, for the curves $E_D$, we have the following result:
\begin{prop}\label{apformula}
Let $E_1$ be the curve $y^2=x^3-x$ and $p$ be an odd prime. Then 
    \[a_p(E_D)=\begin{cases} 0, & \text{if } p \equiv 3 \pmod 4,\\ \left( \frac{D}{p} \right) a_p(E_1), & \text{if } p \equiv 1 \pmod 4,   
\end{cases}\]
where $\left( \frac{D}{p} \right)$ is the Legendre symbol for $D$ modulo $p$.

Moreover, we have that $a_p(E_D)$ is even for all $p>2$.
\end{prop}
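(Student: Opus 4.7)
The plan is to prove the two assertions separately: the twist formula relating $a_p(E_D)$ to $a_p(E_1)$ (with the vanishing in the $p \equiv 3 \pmod 4$ case), and the parity. Both are elementary character sum / rational torsion arguments.

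For the twist formula, I would express point counts through the quadratic character $\chi_p = \left(\tfrac{\cdot}{p}\right)$ on $\mathbb{F}_p^\times$ (extended by $\chi_p(0) = 0$). For any odd prime $p \nmid 2D$ we have
\[
\#\tilde{E}_D(\mathbb{F}_p) \;=\; p + 1 + \sum_{x \in \mathbb{F}_p} \chi_p(x^3 - D^2 x),
\]
so $a_p(E_D) = -\sum_x \chi_p(x^3 - D^2 x)$. Substituting $x = Dt$ (permissible because $p \nmid D$), $x^3 - D^2 x = D^3(t^3 - t)$, and using $\chi_p(D^3) = \chi_p(D)$ gives $a_p(E_D) = \chi_p(D)\, a_p(E_1) = \left(\tfrac{D}{p}\right) a_p(E_1)$. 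This handles the case $p \equiv 1 \pmod 4$ immediately.

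For $p \equiv 3 \pmod 4$, I would show $a_p(E_1) = 0$ directly by exploiting the oddness of $x^3 - x$. Replacing $x$ by $-x$ in $\sum_x \chi_p(x^3 - x)$ and using $(-x)^3 - (-x) = -(x^3 - x)$ together with $\chi_p(-1) = -1$ (which holds precisely when $p \equiv 3 \pmod 4$) shows the sum equals its own negative, hence vanishes. Combined with the twist identity, $a_p(E_D) = 0$ in this case. (Alternatively one can invoke the CM interpretation: $E_1$ has CM by $\mathbb{Z}[i]$, and primes inert in $\mathbb{Z}[i]$ are supersingular.)

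For the parity statement, the observation is that $E_D$ has full rational $2$-torsion, namely $E_D(\mathbb{Q})[2] = \{O,(0,0),(D,0),(-D,0)\}$. Since $p \nmid 2D$ implies good reduction at $p$, the reduction map embeds this order-$4$ subgroup into $\tilde{E}_D(\mathbb{F}_p)$, so $4 \mid \#\tilde{E}_D(\mathbb{F}_p) = p + 1 - a_p(E_D)$. Thus $a_p(E_D) \equiv p+1 \pmod 4$, which is even for any odd $p$.

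I do not foresee a genuine obstacle: the twist manipulation is a routine substitution, the $p \equiv 3 \pmod 4$ vanishing is a one-line symmetry argument, and the parity is forced by the presence of full rational $2$-torsion. The only point requiring mild care is to restrict throughout to primes of good reduction (i.e.\ $p \nmid 2D$), which matches the hypothesis under which $a_p(E_D)$ is defined in the excerpt.
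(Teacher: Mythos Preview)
Your proof is correct and takes a somewhat different route from the paper. For the twist formula, the paper argues case by case: for $p \equiv 3 \pmod 4$ it invokes supersingularity of $\tilde{E}_D$ (via the CM by $\mathbb{Z}[i]$, citing Silverman) to get $\#\tilde{E}_D(\mathbb{F}_p)=p+1$; for $p \equiv 1 \pmod 4$ with $(D/p)=1$ it notes $\tilde{E}_D \cong \tilde{E}_1$ over $\mathbb{F}_p$; and for $(D/p)=-1$ it observes that each $x\in\mathbb{F}_p$ contributes affine points to exactly one of the two reduced curves (or to both when $x^3-D^2x=0$), yielding $\#\tilde{E}_1(\mathbb{F}_p)+\#\tilde{E}_D(\mathbb{F}_p)=2p+2$ and hence $a_p(E_D)=-a_p(E_1)$. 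Your character-sum substitution $x\mapsto Dt$ establishes $a_p(E_D)=\left(\tfrac{D}{p}\right)a_p(E_1)$ in one stroke for all odd $p\nmid D$, and the vanishing at $p\equiv 3\pmod 4$ then falls out of the sign symmetry $x\mapsto -x$; this is more elementary and avoids any appeal to CM or supersingularity. For the parity, the paper uses a single rational $2$-torsion point on $E_1$ to get $a_p(E_1)\equiv p+1\pmod 2$ and then transports this via the twist relation, whereas you use the full rational $2$-torsion on $E_D$ directly to obtain the sharper congruence $a_p(E_D)\equiv p+1\pmod 4$. Both arguments are valid; yours is more self-contained and uniform, while the paper's complementary-counting step has the virtue of making the geometric relationship between a curve and its nontrivial twist explicit.
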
 
\begin{proof}
    When $p \equiv 3 \pmod 4$, then $\tilde{E}_D$ has super-singular reduction over $\F_p$ and so we have $\#\tilde{E}_D(\F_p) = p+1$ (see \cite[Ch. X.6]{sil}).

    On the other hand, when $p \equiv 1 \pmod 4$ and $\left( \frac{D}{p} \right) =1$, then $\tilde{E}_1 \cong \tilde{E}_D$ over $\F_p$ and clearly, $\#\tilde{E}_1(\F_p) = \#\tilde{E}_D(\F_p)$. Next, when $p \equiv 1 \pmod 4$ and $\left( \frac{D}{p} \right) = -1$, then for any given $x \in \F_p$, there exists a $y \in \F_p$ such that $(x,y)$ lies on either $\tilde{E}_1$ or $\tilde{E}_D$ (or both). Also, for such a $y$, $-y$ is also a solution. This implies that $\#\tilde{E}_1(\F_p) + \#\tilde{E}_D(\F_p) = 2p+2$, which in turn implies that $a_p(E_1)=-a_p(E_D)$.

Furthermore, since $E_1$ has a $\Q$-rational point of order $2$, we get that $a_p(E_1) \equiv p+1 \pmod 2$ for any prime $p>2$, i.e. $a_p(E_1)$ is even for all $p>2$. Consequently, $a_p(E_D)$ will be even for all $p>2$.
\end{proof}

Given a large positive integer $X$, let $\mathrm{SF}(X)$ to be the set of all positive square-free integers less than or equal to $X$. 
Enumerate the primes in ascending order as $p_1=2, p_2=3, p_3=5, \ldots$ and so on.
The Frobenius average function $f_{X}:\mathbb{N}\to\mathbb{R}$ is defined as follows:
\begin{align*}
    f_{X}(n) &:= \frac{1}{\#\mathrm{SF}(X)}\sum_{D\in \mathrm{SF}(X)} a_{p_n}(E_D)\\
    & = \frac{a_{p_n}(E_1)}{\#\mathrm{SF}(X)}\sum_{D\in \mathrm{SF}(X)} \left( \frac{D}{p_{n}} \right).
\end{align*}

It is clear that if $p_n\equiv 3\pmod{4}$ then $f_{X}(n) = 0$, for all $X,$  and for other primes, it is the average of Legendre symbols times the Frobenius trace of $E_1$ at $p_n$. 

Following the strategy of \cite{mumur01}, first we computed averages only for the rank $0$ curves, but, we did not see any nice patterns. 
Next, we computed averages for curves with nonzero rank (i.e., corresponding $D$ is a congruent number). 
The plot can be seen in \Cref{traceavgrank}, the black dots correspond to rank $0$ curves and the red dots correspond to nonzero rank curves. 
There are two interesting patterns: first, there is a reflection symmetry across the $X$-axis, second, the average values are very close to zero. 

\begin{figure}[ht]
    \centering
    \includegraphics[width=0.7\linewidth]{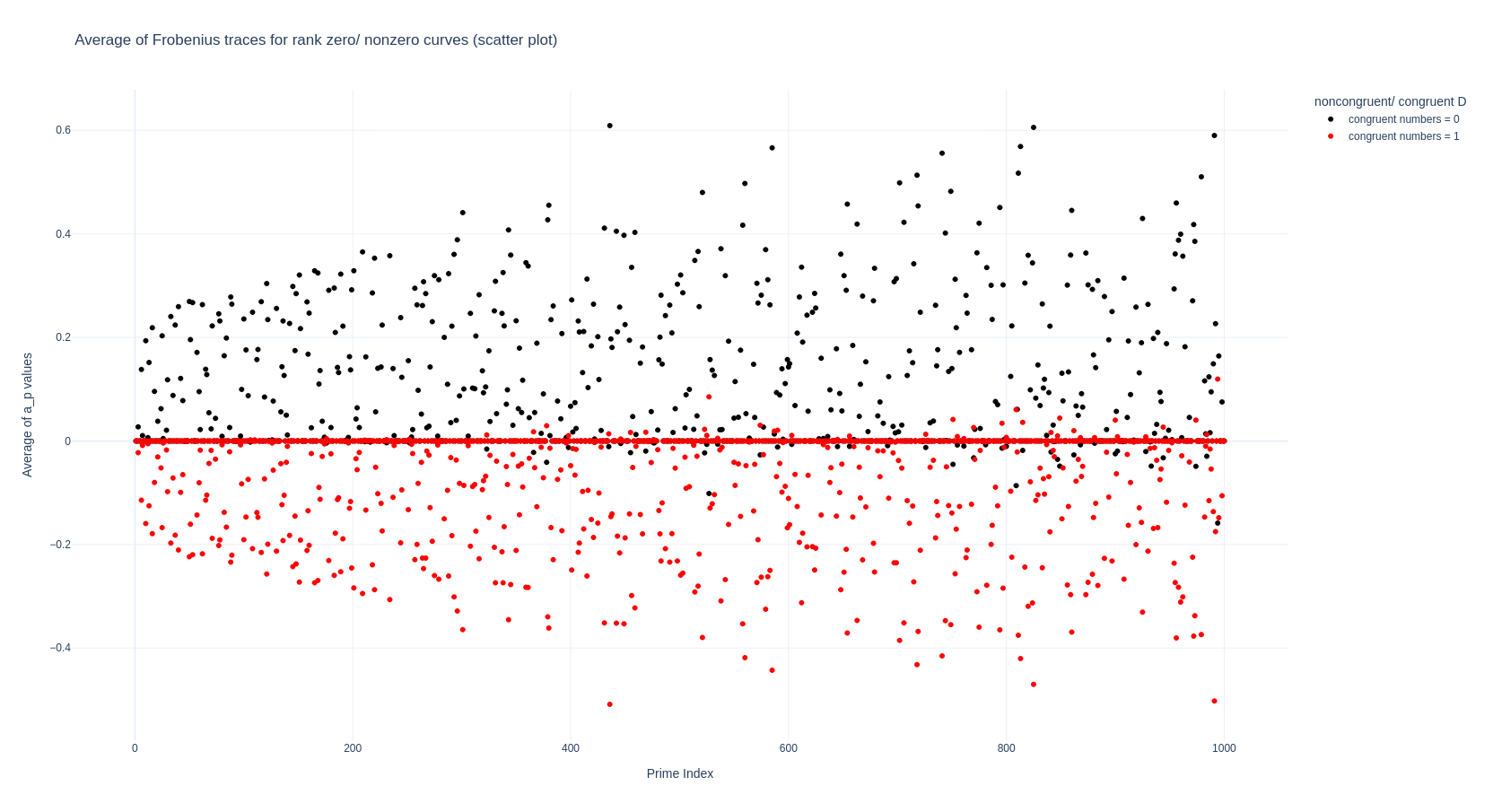}
    \caption{Average of Frobenius traces for rank $0$ and rank nonzero curves}
    \label{traceavgrank}
\end{figure}

As the second experiment, we took averages across residue classes of $D \pmod{8}$.
The scatter plot of these averages can be seen in the left side of \Cref{frob01}. 
Note that, at each prime there are $6$ averages, and they are color coded as given in the figure. 
It is clear that there is no nice \emph{murmurations} pattern as observed in \cite{mumur01}. 
The empirical averages are small, however, they increase as the primes increase. 
If we smoothen this scatter plot using cubic spline technique and use different colors for different residue classes, then we do see some nice oscillations, with varying amplitudes. 
The amplitude variation is present due to the factor of $a_p(E_1)$. 

\begin{figure}[ht]
    \centering
    \begin{tabular}{cc}
        \includegraphics[clip,width=0.45\textwidth]{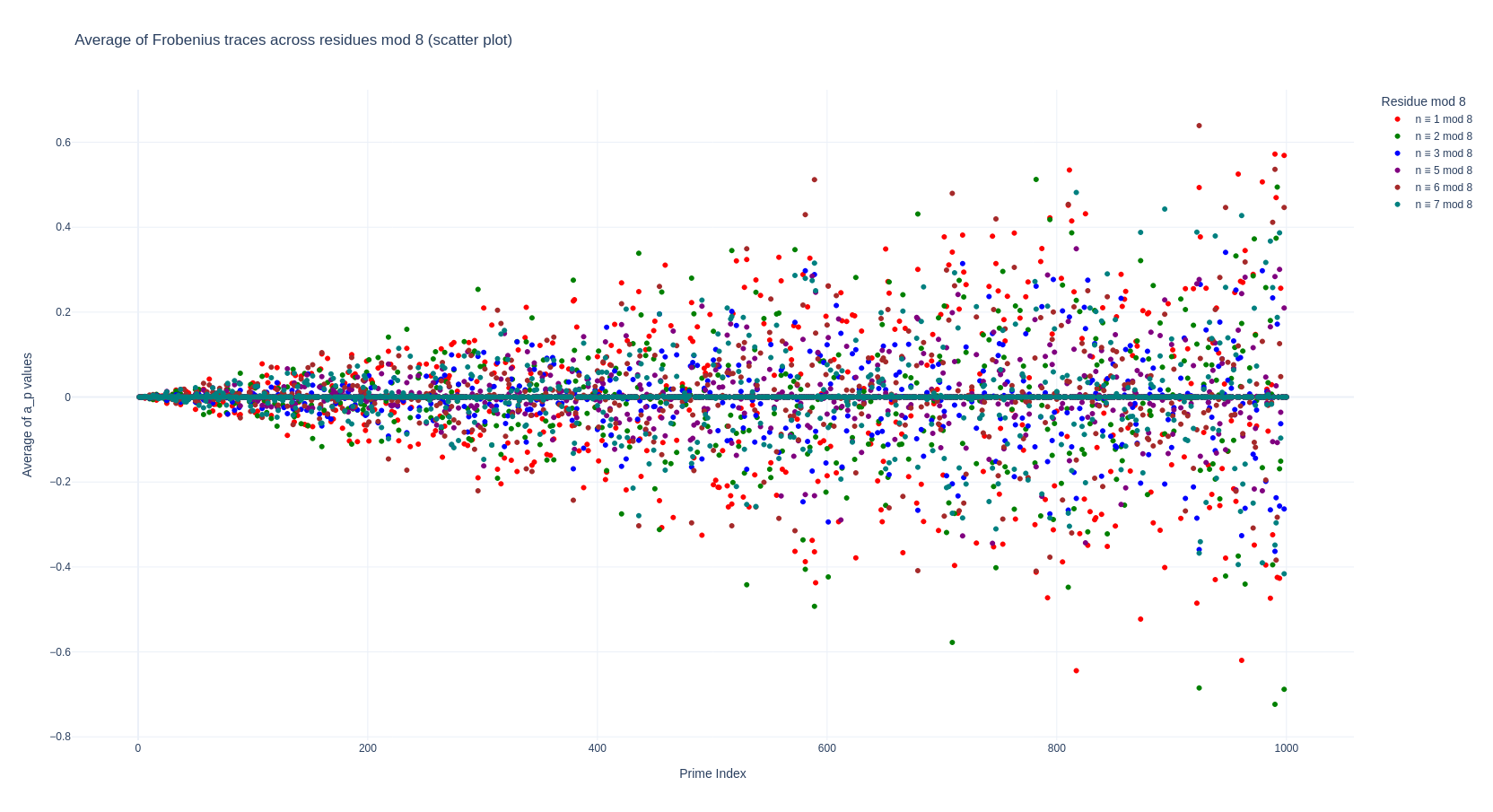} & \includegraphics[clip,width=0.45\textwidth]{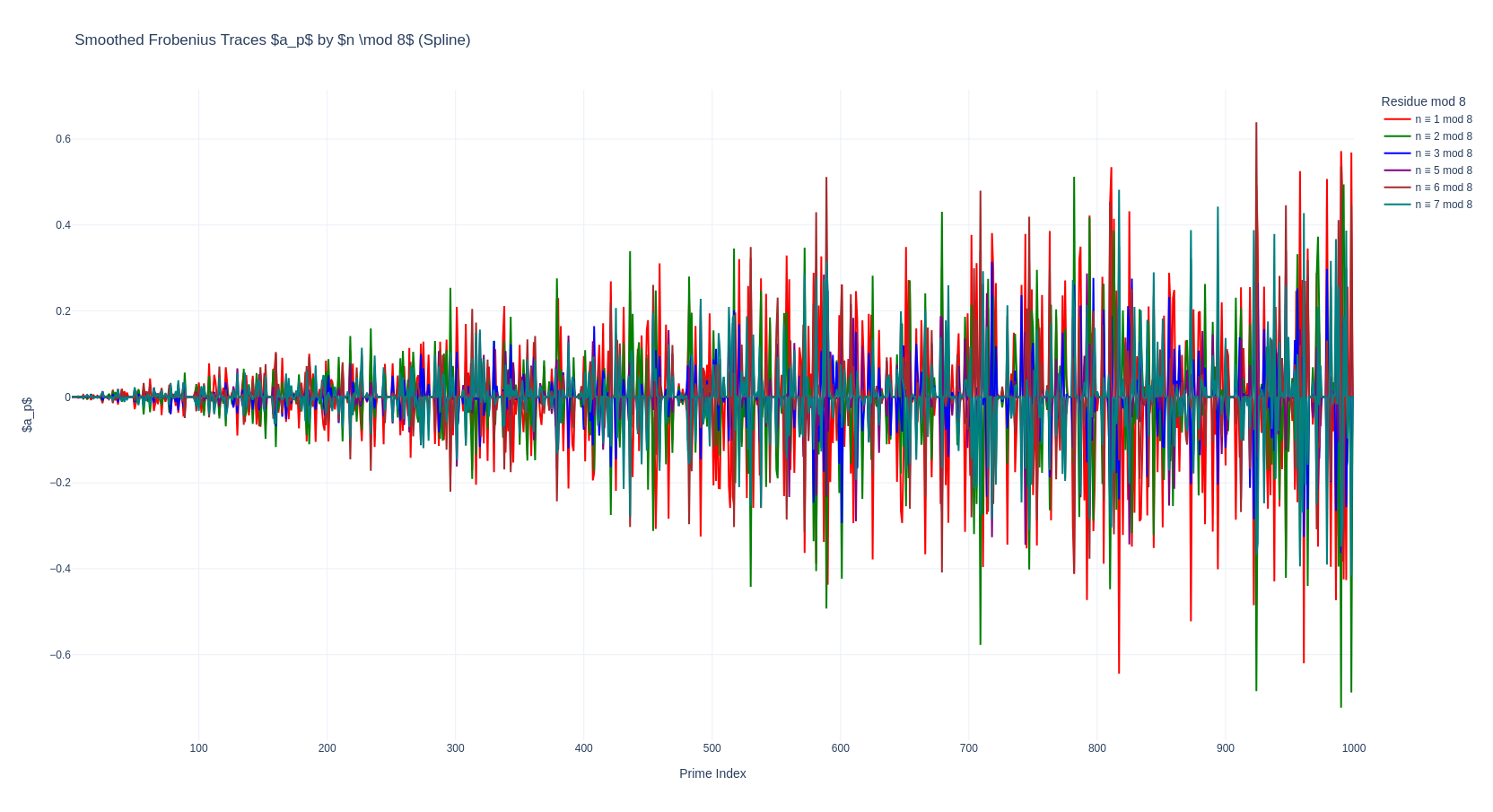}  
    \end{tabular}
    \caption{Averages computed across residues modulo $8$, on the left is the scatter plot and on the right is the smoothed version.}
    \label{frob01}
\end{figure}

We normalize these traces by dividing by $2\sqrt{p}$, hence bringing all the values in the range $[-1, 1]$. 
The scatter plot of normalized averages is in \Cref{frob02}. 
It is clearly seen that the absolute value of the averages is less than $10^{-6}$. 
Note, that these averages are calculated over all the curves in the data. 

\begin{figure}[ht]
    \centering
    \includegraphics[width=0.65\linewidth]{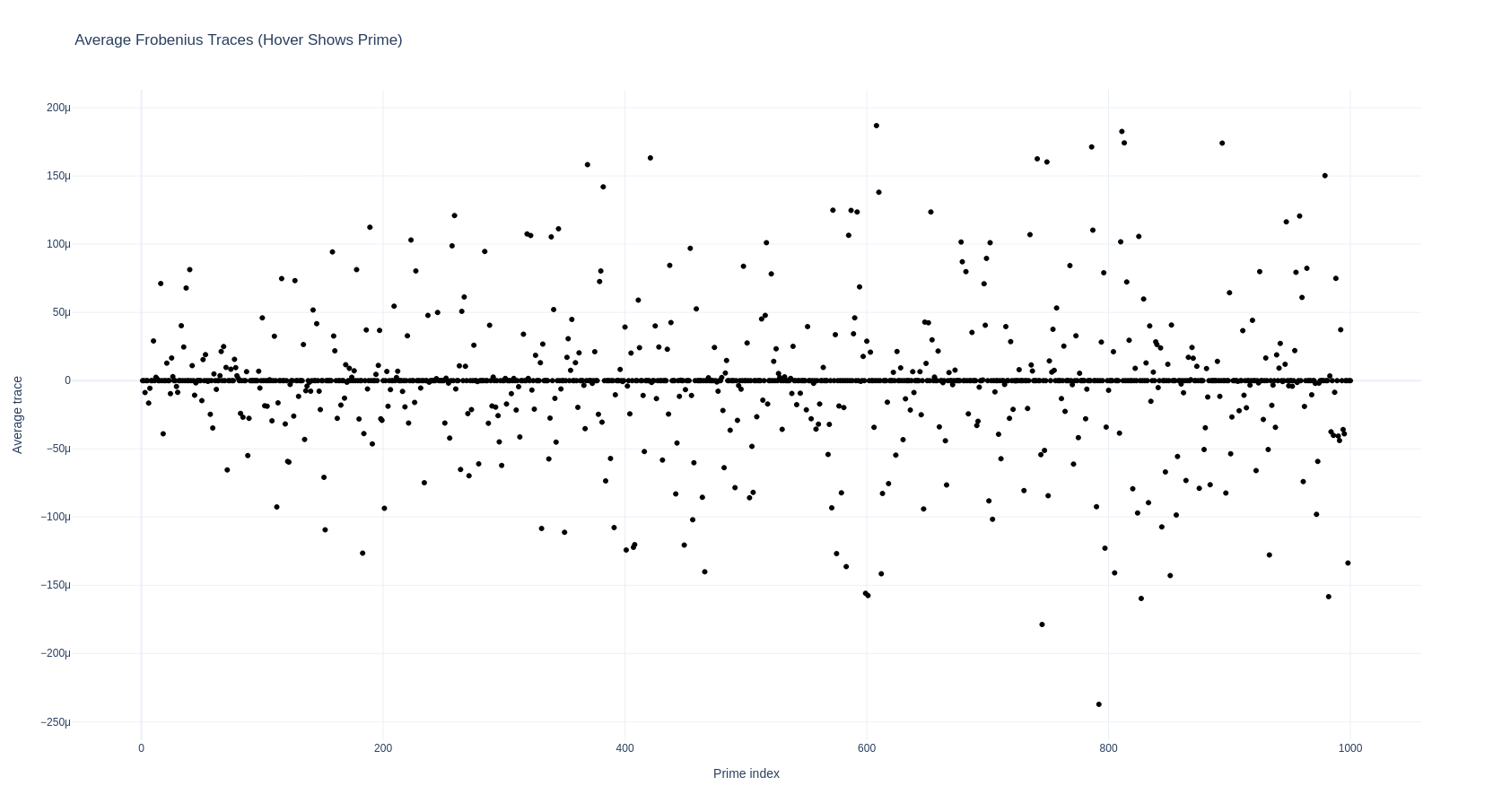}
    \caption{Normalized average over all curves}
    \label{frob02}
\end{figure}

These experimental observations lead to the following result. 
To the best of our knowledge, it has not been mentioned in the literature. 
Arbitrarily choose a prime $p$ and consider the following sequence of averages of the Legendre symbol:
\[C_n(p) := \frac{1}{n}\sum_{k\leq n} \left(\frac{k}{p} \right) \]
\begin{lem}\label{legavg} With the notation as above,
    \[\lim_{n\to\infty} C_n(p) = 0.\]
\end{lem}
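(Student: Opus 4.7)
The plan is to exploit the fact that $k \mapsto \left(\frac{k}{p}\right)$ is a periodic function modulo $p$, together with the classical vanishing of the sum over a complete period. This reduces the lemma to an elementary bounded-partial-sum argument that trivially yields a $1/n$ decay.

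First, I would establish the full-period identity
\[\sum_{k=1}^{p}\left(\frac{k}{p}\right) = 0.\]
For $p$ odd, this is standard: among the $p-1$ nonzero residue classes modulo $p$, exactly $(p-1)/2$ are quadratic residues (each contributing $+1$) and $(p-1)/2$ are non-residues (each contributing $-1$), while $\left(\frac{p}{p}\right) = 0$. Combined with periodicity, this says that the partial sums $T_N := \sum_{k=1}^{N}\left(\frac{k}{p}\right)$ are uniformly bounded.

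Next, I would write $n = qp + r$ with $0 \leq r < p$ and use periodicity to split
\[\sum_{k=1}^{n}\left(\frac{k}{p}\right) = q \sum_{k=1}^{p}\left(\frac{k}{p}\right) + \sum_{k=1}^{r}\left(\frac{k}{p}\right) = \sum_{k=1}^{r}\left(\frac{k}{p}\right).\]
The remaining sum satisfies the trivial bound $\left|\sum_{k=1}^{r}\left(\frac{k}{p}\right)\right| \leq r < p$, whence
\[|C_n(p)| \leq \frac{p}{n} \xrightarrow[n\to\infty]{} 0.\]

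There is essentially no obstacle here: the argument rests on two elementary facts (periodicity of the Legendre symbol and the vanishing of its sum over a full period modulo an odd prime). The only minor point of care is the case $p = 2$, which does not arise in the application since $E_D$ has bad reduction at $2$, so one may always assume $p$ is odd; if desired, a parallel argument with period $8$ handles the Kronecker symbol $\left(\frac{\cdot}{2}\right)$ as well.
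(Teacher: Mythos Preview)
Your proof is correct and follows essentially the same approach as the paper: both use that the Legendre symbol is periodic modulo $p$ with the quadratic residues and non-residues balancing over a full period, so the partial sums $\sum_{k\le n}\left(\frac{k}{p}\right)$ are uniformly bounded and dividing by $n$ gives the result. The paper compresses this into a single squeeze inequality with the bound $(p-1)/2$, while you spell out the division-with-remainder step explicitly and obtain the cruder bound $p$; the underlying idea is identical.
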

\begin{proof}
    Note that, modulo $p$, half the numbers are quadratic residues and the half are quadratic non-residues. Which implies, 
    \begin{align*}
       -\frac{p-1}{2} \leq \sum_{k\leq n} \left(\frac{k}{p} \right) & \leq \frac{p-1}{2} 
    \end{align*}
    Hence, $C_n(p)$ is squeezed between two sequences that tend to zero. 
\end{proof}
As an immediate consequence we have, 
\begin{thm}\label{frobavg}
We have the following limiting behavior of the Frobenius average function:
    
    \[\lim_{X\to\infty} f_{X} (n) = 0 \]
    for every $n\in\mathbb{N}$. 
\end{thm}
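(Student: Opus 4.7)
The plan is to deduce Theorem \ref{frobavg} by combining Proposition \ref{apformula} with a Möbius-inversion strengthening of Lemma \ref{legavg}. The case $p_n \equiv 3 \pmod 4$ is immediate: Proposition \ref{apformula} gives $a_{p_n}(E_D) = 0$ for every $D$, so $f_X(n) = 0$ for all $X$. Throughout what follows I therefore assume $p := p_n \equiv 1 \pmod 4$, and I abbreviate $\chi(D) := \left(\frac{D}{p}\right)$.

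Under this assumption, Proposition \ref{apformula} gives
\[
f_X(n) \;=\; \frac{a_p(E_1)}{\#\mathrm{SF}(X)} \sum_{D \in \mathrm{SF}(X)} \chi(D),
\]
and since $a_p(E_1)$ is a fixed integer, it suffices to show that the average of $\chi$ over square-free $D \leq X$ tends to $0$. Lemma \ref{legavg} only delivers this average over \emph{all} integers up to $n$, so the essential step is to transfer that cancellation to the square-free sub-sequence. For this I use the indicator identity $\mu^2(D) = \sum_{d^2 \mid D} \mu(d)$ and swap orders of summation to obtain
\[
\sum_{D \in \mathrm{SF}(X)} \chi(D) \;=\; \sum_{d \leq \sqrt{X}} \mu(d) \sum_{m \leq X/d^2} \chi(d^2 m) \;=\; \sum_{\substack{d \leq \sqrt{X} \\ p \nmid d}} \mu(d) \sum_{m \leq X/d^2} \chi(m),
\]
where I have used the complete multiplicativity of $\chi$ together with the fact that $\chi(d^2) = 1$ if $p \nmid d$ and vanishes otherwise.

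The inner sum $\sum_{m \leq Y} \chi(m)$ is a partial sum of a nontrivial quadratic character mod $p$; grouping summands into complete residue classes mod $p$ (as in the proof of Lemma \ref{legavg}) shows it is bounded in absolute value by $p-1$ uniformly in $Y$. Applying this bound term-by-term gives
\[
\left| \sum_{D \in \mathrm{SF}(X)} \chi(D) \right| \;\leq\; (p-1) \sum_{d \leq \sqrt{X}} 1 \;\leq\; (p-1)\sqrt{X}.
\]
Combining with the classical asymptotic $\#\mathrm{SF}(X) = \frac{6}{\pi^2} X + O(\sqrt{X})$ yields an overall bound on $|f_X(n)|$ of order $O_p(X^{-1/2})$, which gives the claimed limit.

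There is no substantive obstacle here; the only genuinely new ingredient beyond Lemma \ref{legavg} is the removal of the square-free restriction, and the Möbius identity handles this cleanly while keeping the argument entirely within the elementary framework of the paper. An alternative would be to invoke the standard Pólya--Vinogradov bound or known estimates for character sums along square-free integers, but these would pull in machinery the paper otherwise avoids; the above route is both self-contained and quantitatively sharper than what the statement requires.
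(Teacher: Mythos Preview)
Your proof is correct and follows the same overall strategy as the paper: reduce via Proposition~\ref{apformula} to a Legendre-symbol average and then exploit the bounded partial sums of a nontrivial character. The difference is one of completeness. The paper's proof simply writes
\[
\lim_{X\to\infty}\frac{a_{p_n}(E_1)}{X}\sum_{D\le X}\Bigl(\tfrac{D}{p_n}\Bigr)\mu(D)^2=0
\]
and declares this a ``straightforward application'' of Lemma~\ref{legavg}. But Lemma~\ref{legavg} only treats the average over \emph{all} integers $k\le n$, not over square-free $D$; the paper never justifies the passage from one to the other. Your M\"obius-identity step $\mu^2(D)=\sum_{d^2\mid D}\mu(d)$ followed by the swap of summation is exactly the missing ingredient, and it cleanly reduces the square-free sum to a short outer sum of inner character sums to which the bound from Lemma~\ref{legavg} genuinely applies. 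As a bonus you obtain the explicit rate $f_X(n)=O_p(X^{-1/2})$, which the paper's argument does not yield. So: same route, but you have actually paved the one stretch the paper only gestures at.
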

\begin{proof}
    The proof is a straightforward application of \Cref{legavg}. 
    \begin{align*}
      \lim_{X\to\infty} f_{X} (n) &= \lim_{X\to\infty} \frac{1}{X} \sum_{D\leq X} a_{p_n}(E_D) \mu(D)^2 \\
      & = \lim_{X\to\infty} \frac{a_{p_n}(E_1)}{X} \sum_{D\leq X}\left( \frac{D}{p_{n}} \right) \mu(D)^2\\
      & = 0.
    \end{align*}
    Where $\mu$ is the M\"obius function. 
\end{proof}
We now demonstrate the above theorem using data.
\Cref{movingavg} depicts the behavior of $f_X(37)$ as $X$ increases from $1$ to $1$ million ($157$ is the $37$th prime). 
\begin{figure}[ht]
    \centering
    \includegraphics[width=0.5\linewidth]{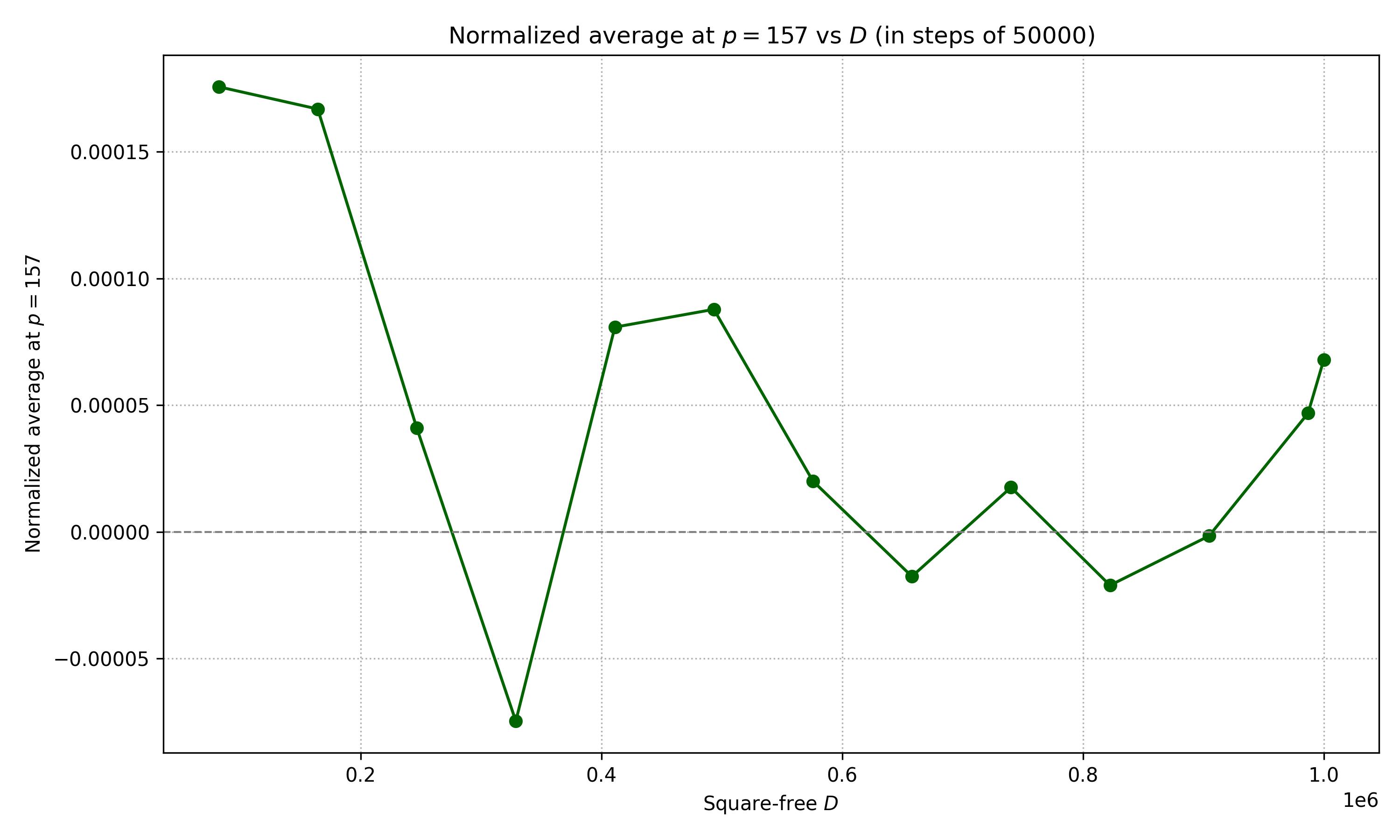}
    \caption{The average trace at $p = 157$ as $X$ increases}
    \label{movingavg}
\end{figure}

Note that, there is nothing special about the congruent number elliptic curves, the above result will be true for any quadratic twist family. 
In fact, there is nothing special about the quadratic twist family. 
We did the same experiment for the cubic twists of $y^2 = x^3 - 1$ and quartic twists of $y^2 = x^3 - 2x$, and discovered the same phenomenon. 
\Cref{cubicavg} shows the average of Frobenius trace of the cubic and the quartic twist families at the prime $p=157$. 
It is clear that as $X$ increases the average oscillates and tends to $0$. 
\begin{figure}[ht]
    \centering
    \begin{tabular}{cc}
    \includegraphics[width=0.45\linewidth]{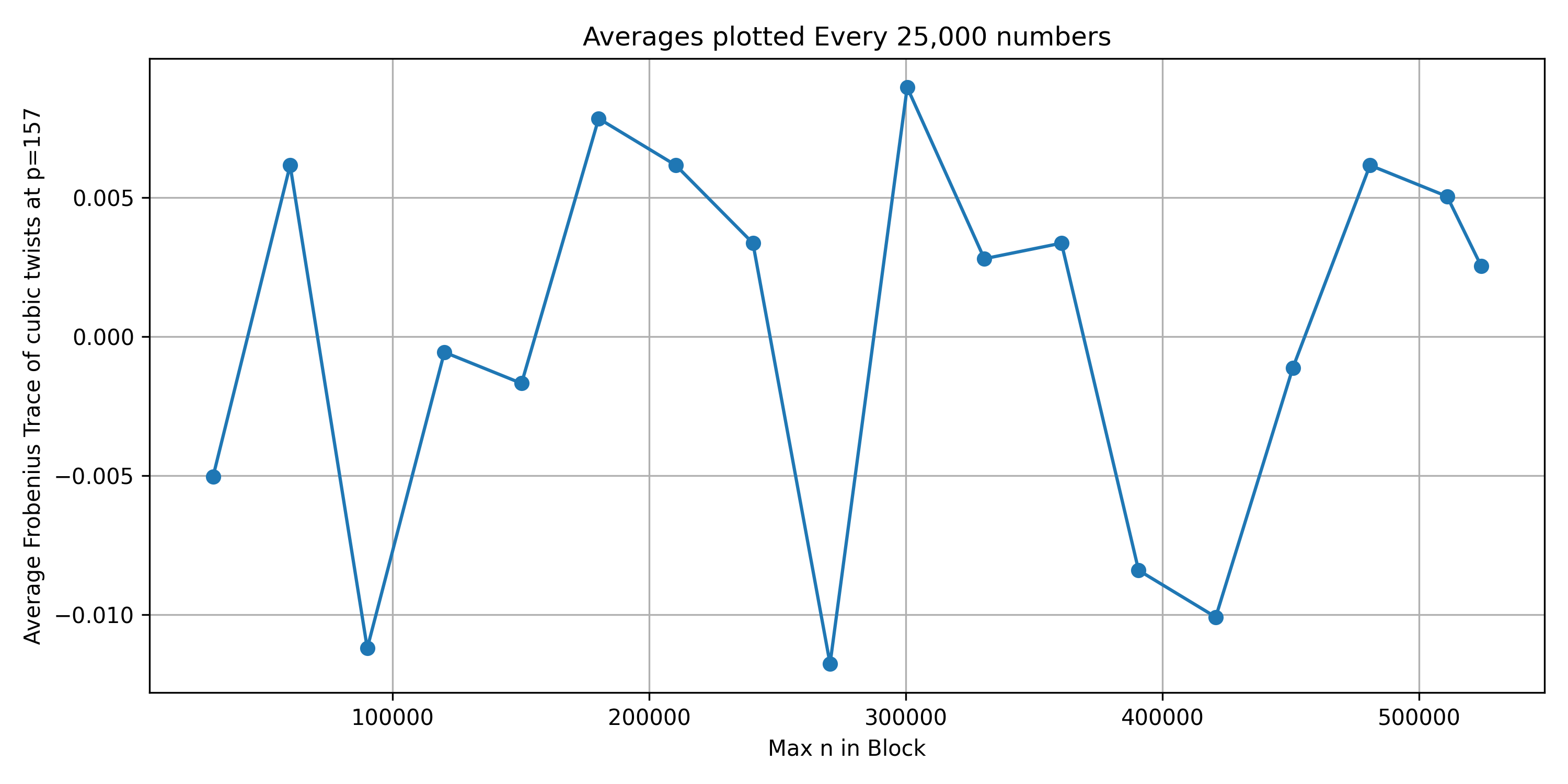} &
    \includegraphics[width=0.45\linewidth]{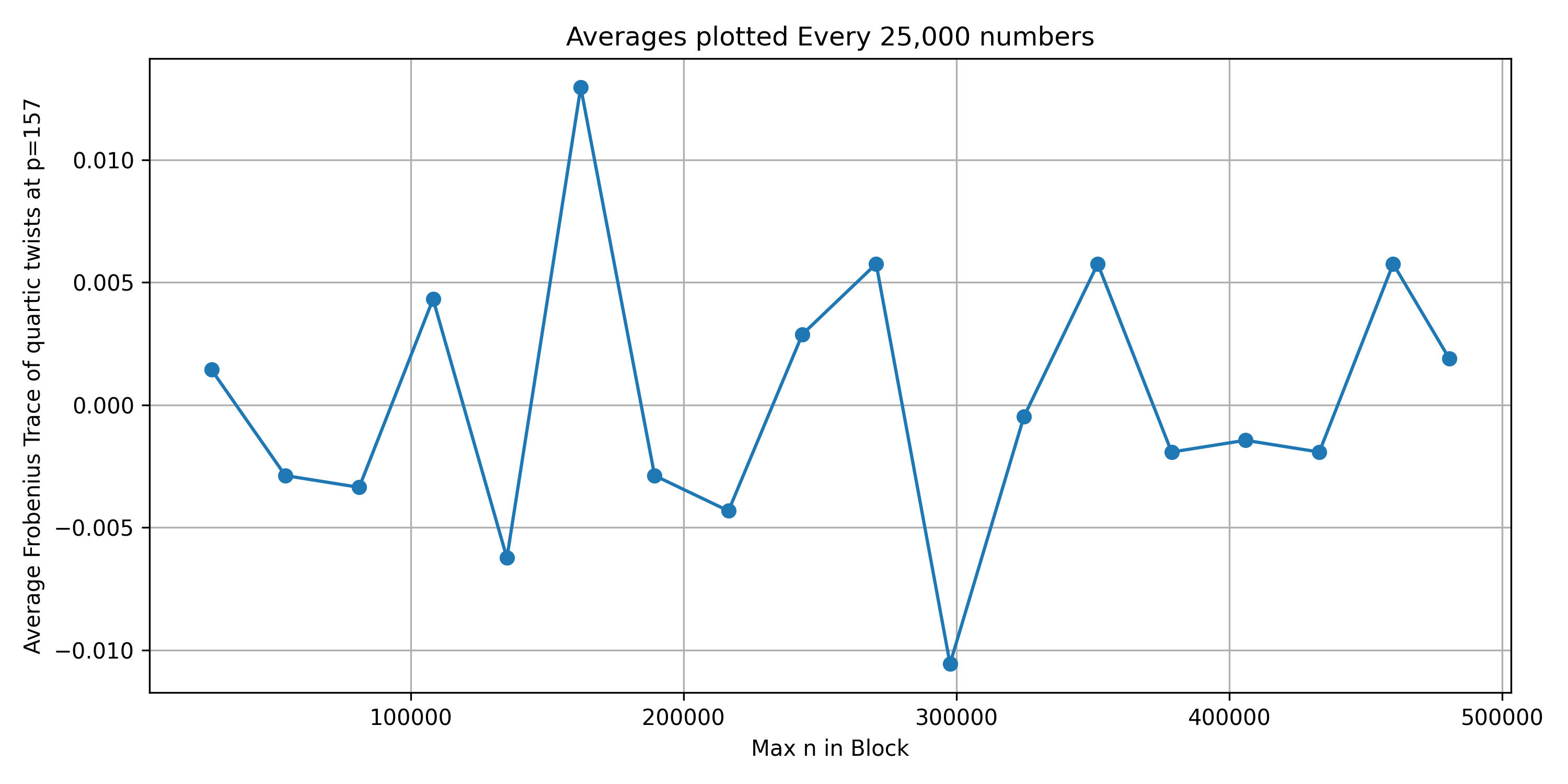}
    \end{tabular}
    \caption{Average value of Frobenius traces over a cubic (left) and a quartic (right) twist family}
    \label{cubicavg}
\end{figure}

These data-driven insights suggest the following:
\begin{conj}\label{openq} 
    Let $\mathcal{E}$ be some twist family (quadratic, cubic, quartic, sextic etc.). 
    If for every $E\in\mathcal{E}$ and prime $p$, the normalized trace $\overline{a}_p(E)$ is seen as a discrete random variable on $\{-1, 0, +1 \}$ then the expected value of $\overline{a}_p()$ zero.
\end{conj}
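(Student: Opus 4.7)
The plan is to generalize \Cref{frobavg} family-by-family, pairing a character formula for $a_p$ with orthogonality of Dirichlet (or Hecke) characters, exactly as \Cref{apformula} and \Cref{legavg} combine in the quadratic case.

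First, I would establish character formulas analogous to \Cref{apformula}. For a quadratic twist family of any elliptic curve $E_0/\Q$, the identity $a_p(E_0^{(d)}) = \left(\frac{d}{p}\right) a_p(E_0)$ is classical at primes of good reduction. For cubic twists of $y^2 = x^3 + b$ (CM by $\Z[\zeta_3]$), classical work of Gauss and Eisenstein gives, for $p \equiv 1 \pmod 3$ with primary prime $\pi \mid p$ in $\Z[\zeta_3]$, $a_p(E_b) = 2\operatorname{Re}(\overline{\chi}_\pi(b)\pi)$ with $\chi_\pi$ a cubic residue character, and $a_p = 0$ for $p \equiv 2 \pmod 3$. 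The quartic case $y^2 = x^3 - ax$ admits a parallel description in $\Z[i]$ via the biquadratic residue character, and the sextic case combines both.

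Once $a_p(E_d) = c_p \cdot \psi_p(d)$ is in hand for a nontrivial character $\psi_p$ on the relevant twist parameters, the averaging step is a direct replay of the proof of \Cref{frobavg}. Using $\mu(d)^2 = \sum_{k^2 \mid d} \mu(k)$ (or its cube-free or fourth-power-free analogue) and swapping the order of summation,
\begin{align*}
\sum_{d \leq X} \mu(d)^2 \psi_p(d) = \sum_{k \leq \sqrt{X}} \mu(k)\psi_p(k)^2 \sum_{m \leq X/k^2} \psi_p(m),
\end{align*}
and each inner sum is $O_p(1)$ by orthogonality of characters modulo $p$. This gives a total bound $O_p(\sqrt{X})$, which is $o(X)$; dividing by the count of twist-free $d \leq X$ (which is $\Theta(X)$) yields the limit $0$, and hence the expected value of $\overline{a}_p$ over the family vanishes.

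The main obstacle is extracting a clean Dirichlet-character factorization in the higher-twist (CM) settings. There the natural characters live on the ray class group of $\Q(\zeta_3)$ or $\Q(i)$ rather than on $(\Z/p\Z)^{\ast}$, so one must descend to a character in the rational twist parameter $d$ before elementary orthogonality as in \Cref{legavg} applies. A secondary concern is the set of primes of bad reduction for the twisted curve, which grows with $d$ and must be shown to contribute negligibly to the average at a fixed $p$; this should follow from a standard Hasse-bound tail estimate, but formalizing it uniformly across all four twist types is where I expect the proof to require real care.
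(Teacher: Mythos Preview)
The paper does not prove \Cref{openq}; it is stated as a conjecture, motivated purely by the numerical experiments in \Cref{cubicavg}. The only case the paper actually proves is the quadratic twist family (\Cref{frobavg}), via \Cref{apformula} and \Cref{legavg}. So there is no ``paper's own proof'' to compare your proposal against.

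That said, your outline goes well beyond what the paper attempts and is a plausible route to settling the conjecture for the specific CM families named. The reduction to character sums is the right idea: for $y^2=x^3+b$ one has $a_p = \overline{\chi}_\pi(b)\pi + \chi_\pi(b)\bar\pi$ at split primes, and since the cubic residue character $\chi_\pi$ restricts to a nontrivial Dirichlet character of order $3$ on $(\Z/p\Z)^\ast$ (as $3\mid p-1$), orthogonality over $d$ applies term by term without any genuine descent obstacle. The same holds for the biquadratic character in the quartic case. Your squarefree sieve bound $O_p(\sqrt{X})$ is correct as written; the cube-free and fourth-power-free analogues follow the same pattern with $k^3\mid d$ or $k^4\mid d$ in place of $k^2\mid d$.

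Your second concern is slightly misdirected: the prime $p$ is fixed throughout, so bad reduction at $p$ occurs only for the density-$1/p$ set of $d$ divisible by $p$, and for the CM families in question this reduction is additive (the singular fibre is a cusp), forcing $a_p=0$ there. Hence the character formula $a_p(E_d)=c_p\cdot\psi_p(d)$ extends to all $d$ with both sides vanishing when $p\mid d$, and no tail estimate is needed. The part of your proposal that would require genuine new work is formulating the conjecture precisely for an \emph{arbitrary} twist family---the paper's statement is informal (the normalized trace is not literally $\{-1,0,+1\}$-valued), and outside the CM setting there is no clean character factorization to exploit.
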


\section{Machine learning congruent numbers}\label{sec:mlexp}
In this section, we describe our experimental methodology whereby we employ supervised machine learning techniques to ascertain whether a given square-free integer is a congruent number. 
We systematically aggregate similar parameters and employ these parameters as features to train a variety of machine learning models. 
For instance, we utilize $a_p$ values as features and conduct experiments to evaluate how accurately these values can predict whether the corresponding elliptic curve has a non-zero rank, which is equivalent to determine whether the number is congruent. 
The strategy is structured as follows: for each category of features, we construct a balanced dataset where the number of curves with zero rank is equal to those with nonzero rank. 
The selection of curves is carried out uniformly at random. 
Subsequently, $80\%$ of the curves are allocated for training the model, while the remaining $20\%$ are reserved for testing the model. 
For each model, the classification report includes the accuracy, precision, recall, F1 score, and ROC AUC, in addition to the confusion matrix. 
The detailed reports are available in our Github repository. 
The study utilizes the following five machine learning models.
\begin{enumerate}
    \item \textbf{Logistic regression} is a linear model that uses a sigmoid function to map the output of a linear combination of input features to a probability between 0 and 1. 
    This probability is then used as a threshold to classify observations into one of two categories, making it a statistical method for binary classification.
    \item \textbf{Random forest classifier} is an ensemble method that builds many decision trees on different random subsets of the data and combines their predictions. 
    This aggregation (typically by majority vote) reduces overfitting and improves accuracy compared to a single tree. 
    It is robust and interpretable in terms of feature importance.
    \item \textbf{A decision tree algorithm} partitions the data into subsets based on rules learned from the features. 
    It recursively splits the input space using axis-aligned thresholds to predict the class label at each leaf. 
    This method resembles a flowchart of if-then rules optimized for classification.
    \item \textbf{Gradient boosting algorithm} constructs a sequence of decision trees, where each tree attempts to correct the errors made by the previous ones. 
    It minimizes a loss function by fitting new trees to the residuals of earlier predictions. 
    The method is highly flexible and effective for structured data.
    \item \textbf{XGBoost} (Extreme Gradient Boosting) is a scalable and optimized implementation of gradient boosting. 
    It incorporates regularization, efficient tree pruning, and parallel computation, making it particularly powerful for high-dimensional and noisy datasets. 
    XGBoost often achieves state-of-the-art performance in classification tasks.
\end{enumerate}

For a general background on the classification methods and their implementation, see \cite{ESL} and \cite{ISL}. 
The detailed classification reports, including the confusion matrix, are available in our repository. 

\begin{expi}
Our objective here is to ascertain the extent to which arithmetic properties, such as residue classes of a given $D$ modulo $8, 16, 32$ and the number of prime factors of $D$, determine whether it is congruent. 
Conjecturally, all numbers that are $5, 6, 7\pmod{8}$ are congruent, a hypothesis we have empirically validated. 
Consequently, we examine residues modulo $16$ and $32$, with the anticipation that this finer division will facilitate the identification of non-congruent numbers in $D\equiv 1, 2, 3\pmod{8}$.

Our database lists square-free numbers up to $1$ million, a column named \texttt{iscongruent} showing $1$ if $D$ is congruent, otherwise $0$. 
These values are based on the MW rank of $E_D$. 
It includes three more columns for the residue of $D$ modulo $16$, modulo $32$, and the number of prime factors of $D$. 
The dataset contains $140000$ curves, evenly split between congruent and non-congruent numbers.

All the models achieve nearly $96\%$ accuracy, with near perfect precision. 
In fact, the gradient boosting algorithm predicts all the non-congruent numbers and there are no false positives. 
The recall rate of all the models is around $91\%$ and the number of false negatives is nearly same for each model.

Let's examine the distribution of congruent numbers by the number of prime factors in each residue class modulo $16$ (see \Cref{cndist16omega}). We note the following:
\begin{enumerate}
    \item Primes of the form $3\pmod{8}$ are non-congruent; an emprical verification of a known result. 
    \item Semi-primes of the form $10\pmod{16}$ are non-congruent, represented as $2q$ where $q\equiv 5\pmod{8}$ is an odd prime. Specifically, $2$ is a quadratic non-residue modulo $q$.
    \item $D = pq$ is an odd semi-prime such that $D\equiv 3\pmod{8}$. If $p$ is not a quadratic residue modulo $q$, then $D$ is not congruent. Conversely, when $p$ is a quadratic residue modulo $q$, then $90\%$ of the $D$s are not congruent while $10\%$ are congruent. 
    \item The proportion of congruent numbers in each residue class rises with more prime factors.
\end{enumerate}
These observations could be used to fine tune the machine learning models as well as to find better characterization of non-congruent numbers. 
One could also build models that have Legendre symbols as features. 
\begin{figure}[ht]
    \centering
    \includegraphics[width=0.75\linewidth]{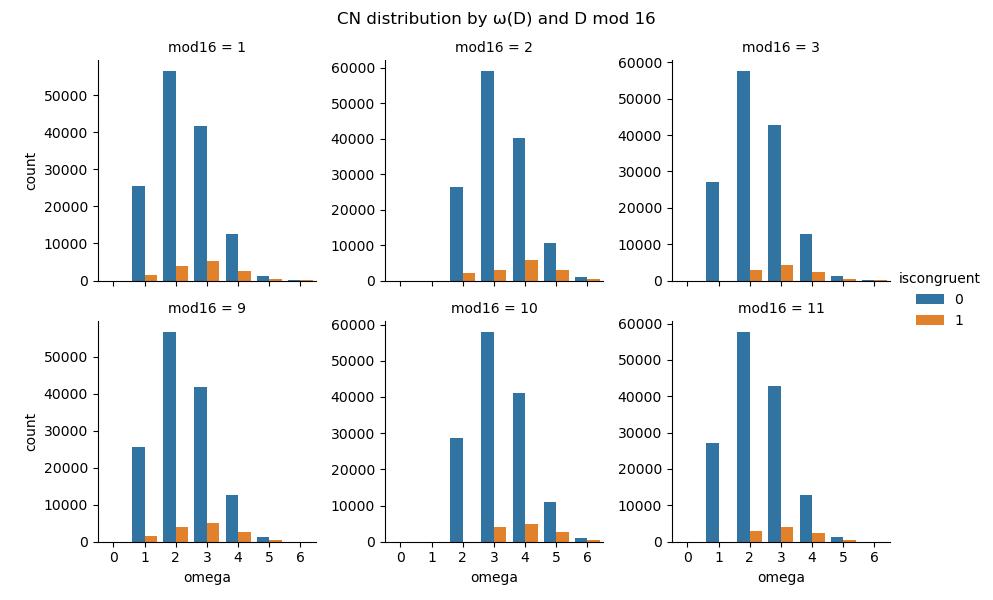}
    \caption{Distribution of congruent numbers: across residue classes modulo $16$ and according to the number of prime factors.}
    \label{cndist16omega}
\end{figure}
\end{expi}

\begin{expi}
    This is the dataset consisting of $1,95,800$ curves corresponding to square-free integers up to $322061$. 
    For each of these curves we calculate its regulator, Tamagawa product, torsion (= $4$), the real period and the special value. 
    We create a balanced dataset containing $87312$ curves corresponding to congruent numbers and the same number of curves corresponding to non-congruent numbers.
    The random seed is fixed for reproducibility. 
    The dataset is then shuffled; $20\%$ of the curves are kept aside as a test set which is not used during training. 
    Almost all the models perform very well. 
    In particular, random forest, gradient boosting and decision tree obtain perfect $100\%$ accuracy. 
    The XGBoost model achieves almost $99.8\%$ accuracy, whereas logistic regression achieves $96.28\%$ accuracy. 
    In both the cases the recall score is relatively low, indicating more presence of false negatives. However, the F1 score is good in both the cases. 
\end{expi}

\begin{expi}
   Selmer theoretic features: for this experiment we consider the $2$-Selmer rank, the $3$-Selmer rank and the $2$-valuation of the modular degree. 
   Recall that the $2$-valuation of the modular degree is the integer $r$ such that the degree is $2^r$ times an odd number. 
   We note that, in our dataset of more than $600,000$ curves all the modular degrees are even and the $2$-valuation is always bigger than the MW rank, which is consistent with a conjecture of Watkins that $2^{\mathrm{rank}(E)}$ divides the modular degree. 

   Next we prune our dataset to $59,110$ curves (corresponding to square-free numbers up to $95,000$) since we have $3$-Selmer ranks calculated for only those many curves. 
   Half of these curves are of rank $0$ and the remaining half have non-zero rank; thus forming our balanced dataset. 
   All the models uniformly predict congruent numbers with an accuracy (and the F1 score) of slightly more than $96\%$. 
   Due to a higher number of false negatives the recall is around $95\%$, indicating the presence of false negatives. 
\end{expi}

\begin{expi}
    Frobenius traces: in this experiment we consider these traces as our features.  
    Our dataset contains all the square-free numbers up to $1$ million, equivalently $607,926$ congruent number curves, as features we have Frobenius traces of each curve for the first $1,000$ primes. 
    A balanced dataset is created by randomly selecting $70,000$ curves corresponding to congruent numbers and $70,000$ that do not correspond to congruent numbers. 

    Across models the accuracy barely crosses the $50\%$ mark. 
    The Recall rate is extremely poor, on average below $49\%$, indicating significant presence of false negatives. 
    Either increasing the sample size or tweaking train/test ratio makes no change. 
    For a congruent number elliptic curve, the Frobenius trace at a prime $p\equiv 3\pmod{8}$ is $0$. 
    There are $505$ such primes in our dataset, hence we trim the dataset by retaining primes of the form $1\pmod{4}$. 
    A marginal increase in the accuracy is observed in case of the Logistic Regression model, however, in case of other models, the results don't change much. 

    If one assumes the BSD conjecture, then the value of the $L$-function of $E_D$ at $1$ should determine whether $D$ is a congruent number. 
    In particular, if $L(E_D, 1) = 0$ then $D$ is a congruent number otherwise not. 
    Recall that $L(E, 1)$ has a Taylor expansion, called the Dirichlet series. The coefficients of this series are implicitly determined by the Frobenius traces. 
    Since all possible Frobenius traces are needed to uniquely determine the $L(E, 1)$, our experiments suggest that more primes are needed in order to improve the accuracy. 
    
\end{expi}
There are many directions after these experiments. 
For example, the interested reader can choose one algorithm and find important features in each of these experiments. 
Then, one can try and train models on these \emph{important} features. 
One could also explore training neural networks for this prediction task.

\section{Concluding Remarks}\label{conclude}
In the final Section we present some empirical data that we hope will lead to new discoveries. 
For example, to date there is no research regarding $3$-Selmer groups similar to that of Heath-Brown's work on $2$-Selmer groups of congruent number curves. 
In our experiments we calculated $3$-Selmer ranks and have detailed the findings here. 
Similarly, there are not many results regarding distribution of MW rank, so we perform similar experiments. 

\subsection{Distribution of \texorpdfstring{$3$}{3}-Selmer groups} 
To the best of our knowledge there has been no study undertaken to understand the distribution of $3$-Selmer ranks of congruent number elliptic curves. 
As described earlier, we calculated $3$-Selmer ranks using the \texttt{ThreeSelmerGroup} function of Magma. 
When compared to $2$-Selmer computations, calculation of $3$-Selmer ranks is considerably slow. 
With increasing conductor for some curves it takes hours to compute the $3$-Selmer group. 
Our current data contains ranks of $80,760$ curves, these correspond to square-free numbers up to $1,30,843$. 

The first striking feature we discovered was that, similar to the $2$-Selmer case, the parity of $3$-Selmer rank depends on the residue of $D$ modulo $8$. 
As seen in \Cref{threeseldist}, in case of $1, 2, 3\pmod{8}$, the ranks are even. 
In particular, in each of these three residue classes almost $66\%$ have ranks $0$, slightly less than $34\%$ have rank $2$ and only a handful are of rank $4$. 
In case of $5, 6, 7\pmod{8}$, the ranks are always odd. 
Almost $97\%$ curves in each residue class are of rank $1$ and the remaining $3\%$ are of rank $3$. 

\begin{figure}[ht]
    \centering
    \includegraphics[width=0.7\linewidth]{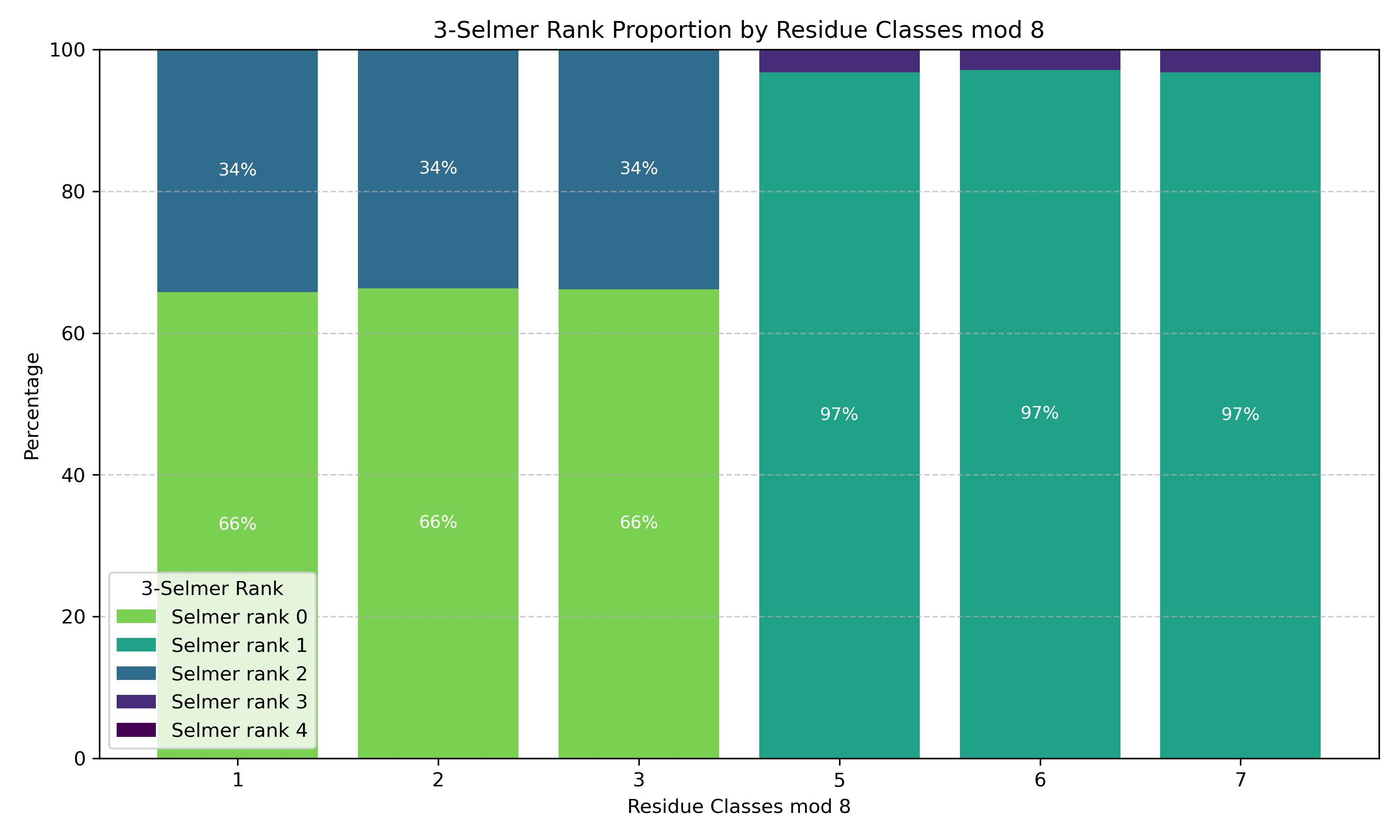}
    \caption{Distribution of $3$-Selmer ranks}
    \label{threeseldist}
\end{figure}

The average size of the $3$-Selmer group, calculated for the entire set of curves is $3.75376$.
Moreover, the average size appears to be constant across all residue classes, see \Cref{s3size}. 
Interestingly, these numbers are pretty close to the conjectured average of $4$, across all elliptic curves.
This behavior is strikingly different from the $2$-Selmer case. 
\begin{table}[ht]
    \centering
    \begin{tabular}{|c|c|} \hline
         \textbf{Residue}&\textbf{Average}  \\ \hline
        $1$ & $3.733086$ \\ \hline
        $2$ & $3.754089$ \\ \hline
        $3$ & $3.772318$ \\ \hline
        $5$ & $3.798515$ \\ \hline
        $6$ & $3.721604$ \\ \hline
        $7$ & $3.742984$ \\ \hline
    \end{tabular}
    \caption{Average size of $3$-Selmer group across residue classes}
    \label{s3size}
\end{table}

The probability mass function of $3$-Selmer ranks is illustrated in 
\Cref{3selpmf}. 
The empirical probability distribution is fairly close to the theoretical one, as conjectured by Poonen-Rains \cite{pr}. 
\begin{figure}[ht]
    \centering
    \includegraphics[width=0.65\linewidth]{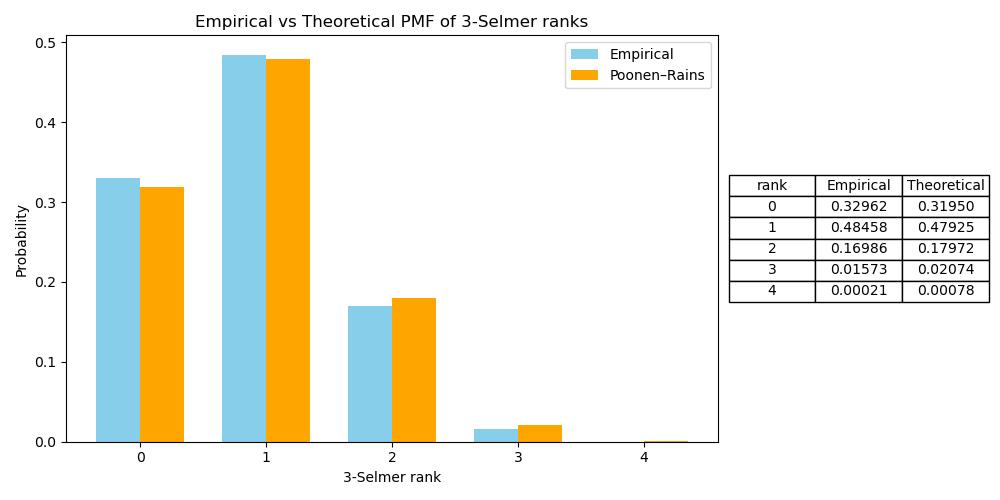}
    \caption{Probability mass function of $3$-Selmer ranks}
    \label{3selpmf}
\end{figure}

In case of moments, the first two moments are close to the theoretical predictions, see, \Cref{3selmpmemnts}. 
From the third moment onward empirical values are much less; probably indicating that the data is too less. 
Recall that the conjectured value the $k$th moment of size of $3$-Selmer groups is $\prod_{i=1}^k(1+3^i)$.

\begin{figure}[ht]
    \centering
    \includegraphics[width=0.55\linewidth]{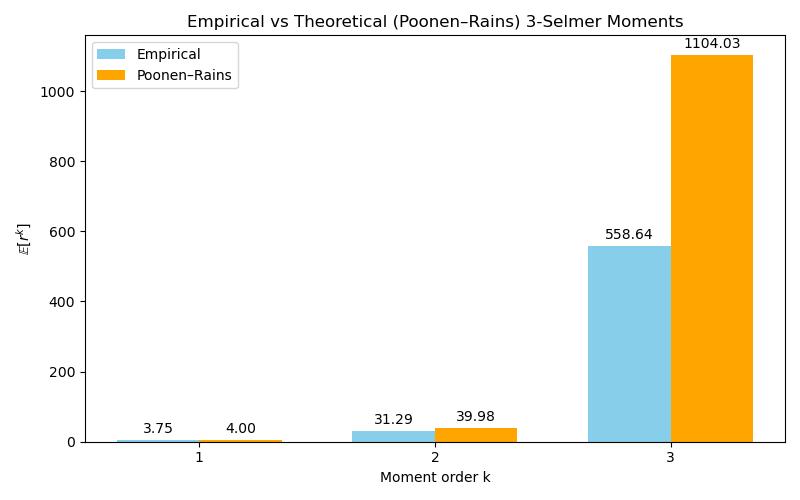}
    \caption{Comparison of moments of $3$-Selmer sizes}
    \label{3selmpmemnts}
\end{figure}

Empirical evidence suggest that there is much scope for theoretical investigations of $3$-Selmer groups of congruent number elliptic curves. 
Similar to the work of Heath-Brown, distribution of $3$-Selmer ranks should be studied in-depth. 
We end this section by exhibiting the computations of $3$-Selmer moments across odd residue classes, analogous to 
\Cref{hb2thm1}. 
Empirical data shows that the moments are almost constant across the four residue classes, again a very different behavior compared to the $2$-Selmer case. 
The details are in \Cref{odd3selmomemnts}.

\begin{figure}[ht]
    \centering
    \includegraphics[width=0.55\linewidth]{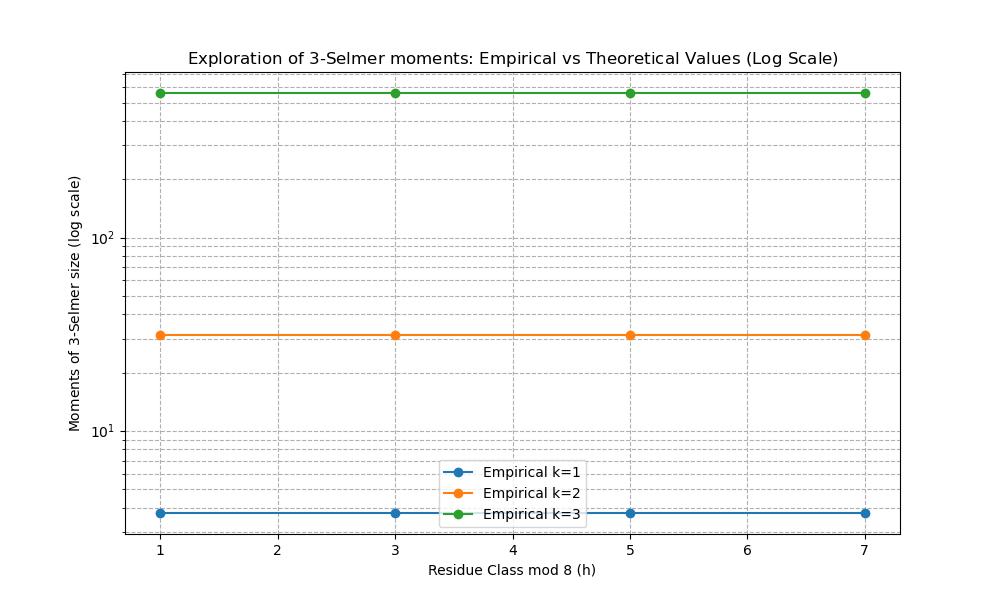}
    \caption{Comparison of moments of $3$-Selmer sizes across odd residues}
    \label{odd3selmomemnts}
\end{figure}

\subsection{The behavior of the MW rank: an empirical perspective}
In general, the distribution of MW ranks is not well understood. 
Since, the Selmer rank is an upper bound for the MW rank, Heath-Brown in his paper, has mentioned some results that provide an upper bound for trailing probabilities and average rank across residue classes modulo $8$. 
In this Section, we present empirical findings and hope that they might lead to improved bounds and a better understanding. 

The database contains MW rank of $N = 1,734,926$ curves corresponding to most square-free numbers up to $3$ million. 
The average rank of these curves is $0.55995$. 
The average rank across each residue class is given in \Cref{rankmean}.
\Cref{rnkpro} shows the stacked bar chart of proportions of these ranks across all the $6$ residue classes. 
It is clear that the parities of the Selmer rank and the MW rank match. 
In particular, for residue classes $5, 6, 7$ almost all curves are of rank $1$ and less than $0.5\%$ have rank $3$. 
Implying that the corresponding numbers are congruent. 
The curves corresponding to the residue class of $3$ have the maximum proportion of rank $0$, more than $91\%$.

\begin{table}[ht]
    \centering
    \begin{tabular}{|c|c|}
    \hline
    \textbf{Residue class} & \textbf{Average rank}\\ \hline
        $1$ & $0.170107$  \\ \hline
        $2$ & $0.169015$ \\ \hline
        $3$ & $0.127843$ \\ \hline
        $5$ & $1.002960$ \\ \hline
        $6$ & $1.005074$ \\ \hline
        $7$ & $1.002631$ \\ \hline
    \end{tabular}
    \caption{Average MW rank across residue classes}
    \label{rankmean}
\end{table}

\begin{figure}[ht]
  \centering
  \includegraphics[width=0.6\textwidth,clip]{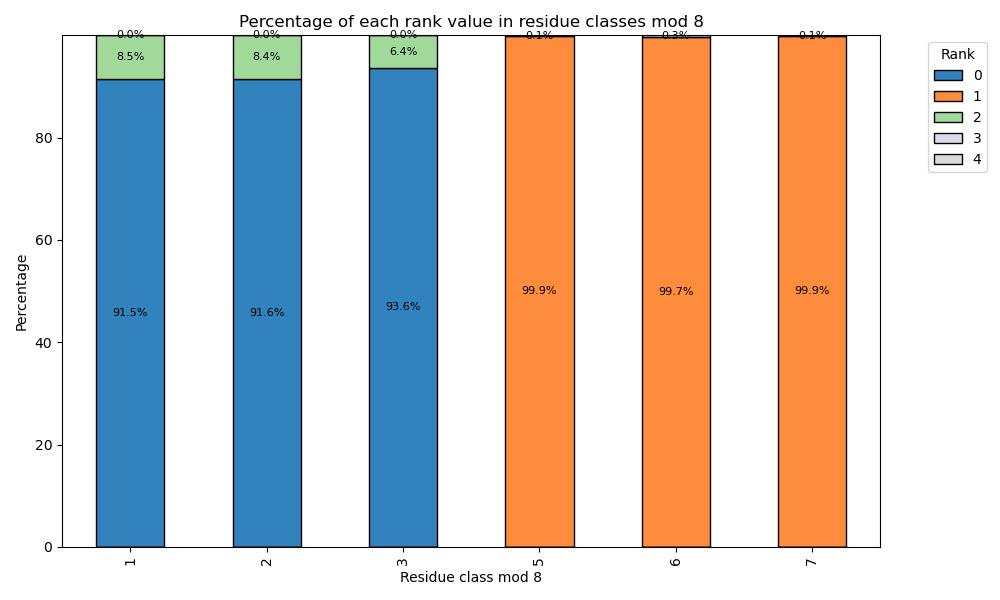}
  \caption{Proportion of MW ranks in each residue class}
  \label{rnkpro}
\end{figure}

We start our analysis with \cite[Corollary 3]{hb}, which says that the rank  $\mathrm{rk}(E_D) = 0$ for at least a third of all $D\equiv 1, 3\pmod{8}$ and it is $1$ for at least five-sixths of all $D\equiv 5,7\pmod{8}$. 
Moreover, the average across odd residue classes is bounded above by $1.5$. 
Empirically, the averages are fairly small as seen in \Cref{rankmean}. 
On the other hand, proportion of rank $0$ curves is $0.9092$ and the proportion of rank $1$ curves is $0.9946$, much bigger than the theoretical prediction. 

We now examine \cite[Corollary 3]{hb1}, which gives an upper bound for trailing probabilities across odd residues, i.e., the proportion of curves with rank at least $r$. 
The upper bound is the same given above in \Cref{trailingprob}. 
A plot of our empirical observations is in \Cref{mwrtrail}, as seen the theoretical upper bounds are not at all tight; there is a possibility for improvement.

\begin{figure}[ht]
    \centering
    \includegraphics[width=0.5\linewidth]{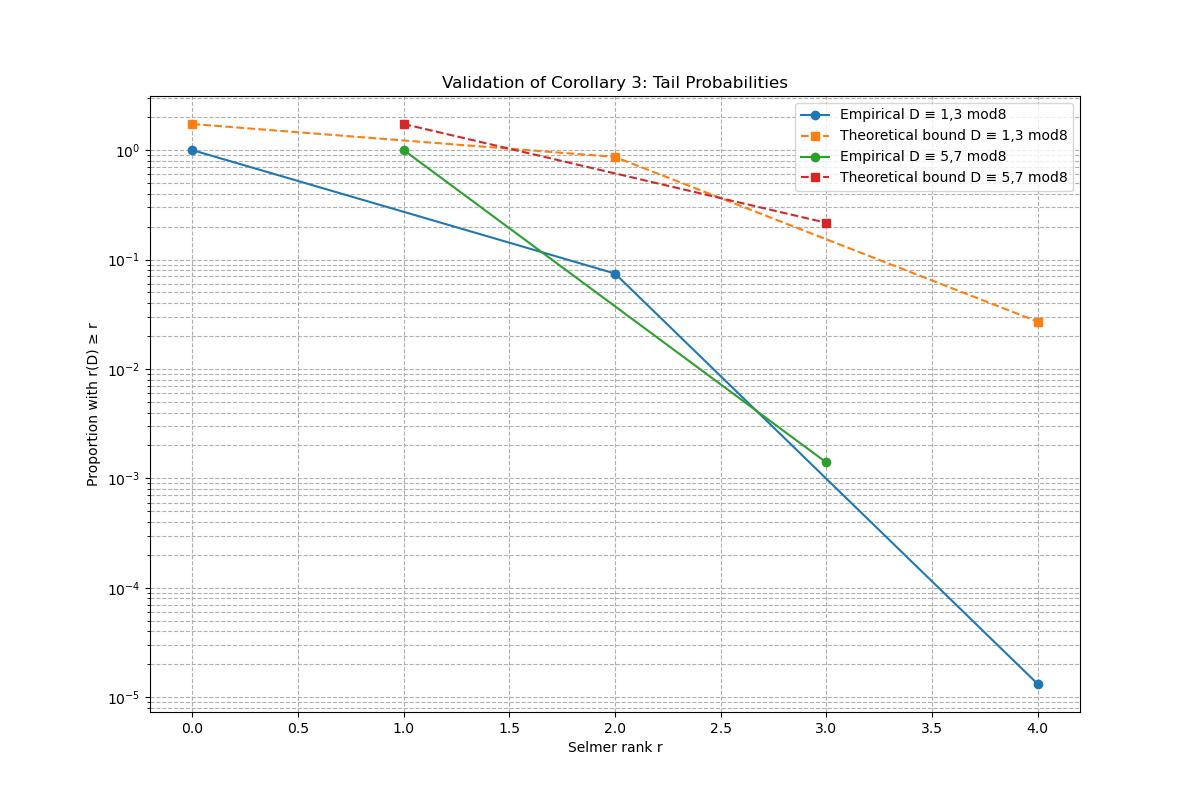}
    \caption{Trailing probabilities of the MW rank}
    \label{mwrtrail}
\end{figure}

We now look at the average rank across two groups of odd residue classes, i.e., \cite[Corollary 4]{hb1}. 
The \Cref{avgmwrank} shows the observed findings, it is clear that the upper bounds predicted by the Corollary are pretty far off. 
Again, there is room for improvement. 

\begin{table}[ht]
    \centering
    \begin{tabular}{|c|c|c|c|}
    \hline
    \textbf{Residue} & \textbf{Empirical} & \textbf{Theoretical} & \textbf{Count}\\ \hline
        $1$ & $0.1701$ & $1.2039$ & $301962$ \\ \hline
        $3$ & $0.1278$ & $1.2309$ & $303153$ \\ \hline
        $5$ & $1.0030$ & $1.3250$ & $275026$ \\ \hline
        $7$ & $1.0026$ & $1.3250$ & $275936$ \\ \hline
    \end{tabular}
    \caption{Average MW rank across odd residue classes}
    \label{avgmwrank}
\end{table}

Finally, we look at the empirical distribution of MW rank across all the curves in our database. 
Though the Goldfeld's conjecture is stated for analytic ranks, we will attempt to verify it for MW ranks instead. 
The first reason is the BSD conjectures, which implies that these two ranks are the same. 
The second reason is that in our experiments with analytic ranks we could not observe the $50/50$ split empirically, mainly due to small data. 
Recall that, Goldfeld's conjecture is an asymptotic statement: the $50/50$ balance is expected in the limiting case, not necessarily for the first few hundred thousand curves.
Hence, we consider $N = 1,734,926$ curves whose MW ranks are computed. 

We note that, random sampling (drawing indices uniformly at random from a large range) might simulate the behavior in that range and break ordering effects.
In particular, Bernoulli sampling method is known for removing ordering biases and transient initial behavior.
Moreover, repeated random samples let us estimate the distribution of proportion of rank $0$ curves, not just a single point estimate. 
If the sampling distribution centers near $0.5$ and narrows with larger sample size, that supports convergence.
Unlike taking the first $k$ rows (by increasing $D$), 

We choose each curve independently with probability $p \in\{\frac{1}{100} ,\frac{1}{200}, \frac{1}{500}\}$. 
We then perform repeated random samples of size $Np$ drawn uniformly from all square-free, $n$ less than $3$ million, where $N = 1,734,926$.
For each sample, we compute the proportion of rank $0$ curves and perform a statistical test of how far are the two distributions.
We repeat this process many times to get the sampling distribution.
From \Cref{bernoulli} it is clear that For $p=\frac{1}{500}$, the sample size is roughly $3500$ and after performing trials for $2,500$ times we see that the maximum proportion of rank $0$ curves crosses $0.51$, otherwise the average is $0.49$. 
Suggesting that we need much more data to empirically observe the true distribution of ranks. 
\begin{figure}[ht]
    \centering
    \includegraphics[width=0.65\linewidth]{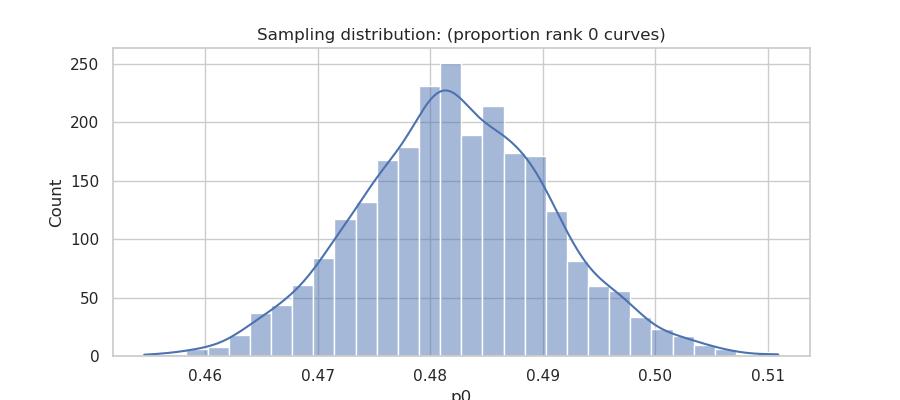}
    \caption{Proportions of rank $0$ curves during Bernoulli trials}
    \label{bernoulli}
\end{figure}

\subsection{The shape of the BSD space}
Principal Component Analysis (PCA) is a dimensionality reduction technique that identifies orthogonal directions of maximal variance in high-dimensional data.
This facilitates both visualization in two or three dimensions and improved clustering of the data with similar intrinsic behavior. 
It provides a geometric lens to explore hidden correlations among (conjecturally) significant quantities.
PCA was used in \cite[Section 4.2]{mumur01} on the Frobenius traces of (randomly selected) $36,000$ elliptic curves and a clear separation according to ranks was observed. 
PCA was also used in \cite[Section 5.3]{shaorder} on the BSD parameters data of almost $3$ million elliptic curves in the LMFDB database whose analytic Sha size is at most $100$. 
In the visualization two distinct arms were observed, but no explanation could be found for this particular shape. 
We repeat this experiment for congruent number elliptic curves. 
We choose five features, namely, torsion, regulator, special value, real period and the Tamagawa product of $2,93,100$ congruent number curves corresponding to square-free numbers corresponding to $4,82,131$. 
We apply PCA to this data; the projection of this $5$-dimensional data on $\mathbb{R}^2$ can be seen in \Cref{shapca}.
The plot is fairly similar to the one obtained in \cite[Figure 5.4]{shaorder}; there are two distinct arms, one of them slimmer; points corresponding to high Sha orders form a dense core. 

\begin{figure}[ht]
    \centering
    \includegraphics[width=0.45\linewidth]{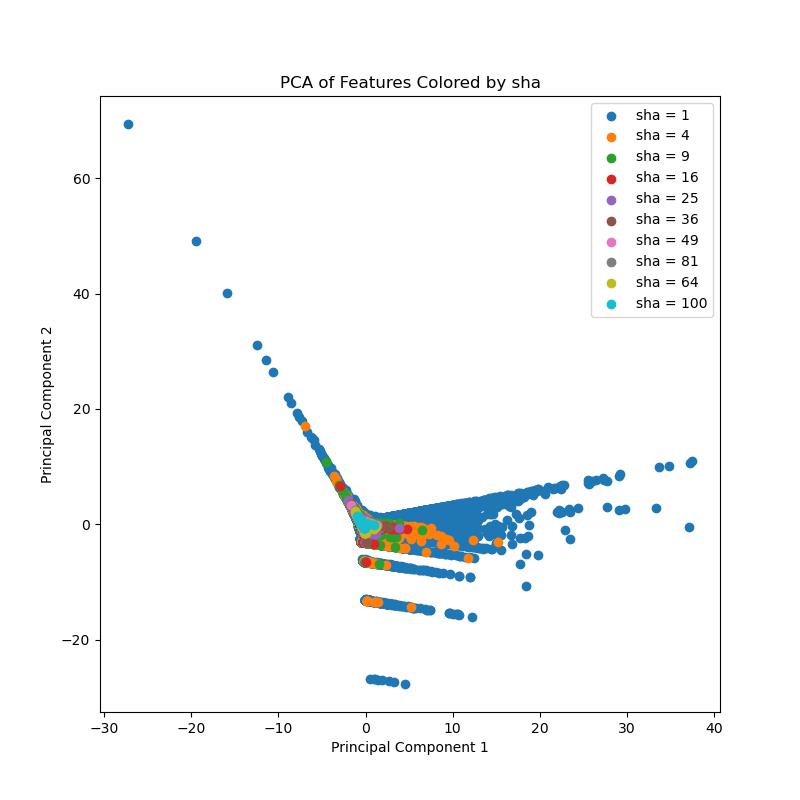}
    \caption{PCA on the BSD parameters with Sha order at most $100$}
    \label{shapca}
\end{figure}

First, we use persistent homology to understand the $1$-skeleton of the space from which these points are (supposedly) sampled from. 
Since, persistent homology calculations are computationally intensive, we use a subset of data. 
The persistent diagram in \Cref{bsd_pd} shows the existence of at least $3$ distinct connected components. 
Since, all the $1$-dimensional persistent homology classes are near the diagonal the BSD-space has no $1$-dimensional homology. 
\begin{figure}[ht]
    \centering
    \includegraphics[width=0.45\linewidth]{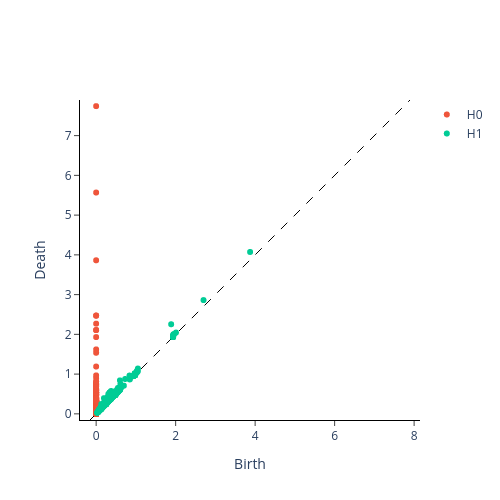}
    \caption{Persistent homology of the space of BSD parameters}
    \label{bsd_pd}
\end{figure}

We then use the Kepler Mapper \cite{keplermapper} library in Python to try and understand this shape.
Recall that, the Mapper algorithm is a popular tool from topological data analysis, which, at its core is based on Morse theory. 
It makes use of dimensionality reduction techniques like PCA as \emph{Morse functions} to describe the shape of the parameter space in terms of a simplicial complex. 
In our case, the Mapper algorithm yields $4$ connected components, which we tried visualizing by different color schemes based on MW rank, Selmer rank, \emph{iscongruent} value and residue class modulo $8$. 
However, we did not find any obvious explanation that may reveal a relationship between these parameters previously unknown. 
The code and the Mapper graphs are available in our repository, interested reader is encouraged to do more experiments in order understand the shape of the BSD space!

\bibliography{references}

\begin{thebibliography}{10}

\bibitem{shaorder}
A.~Babei, B.~Banwait, A.~Fong, X.~Huang, and D.~Singh.
\newblock Machine learning approaches to the shafarevich-tate group of elliptic
  curves, 2024.

\bibitem{bhkssw}
J.~Balakrishnan, W.~Ho, N.~Kaplan, S.~Spicer, W.~Stein, and J.~Weigandt.
\newblock Databases of elliptic curves ordered by height and distributions of
  {S}elmer groups and ranks.
\newblock {\em LMS J. Comput. and Math.}, 19(A):351--370, 2016.

\bibitem{bs1}
M.~Bhargava and A.~Shankar.
\newblock Binary quartic forms having bounded invariants, and the boundedness
  of the average rank of elliptic curves.
\newblock {\em Ann. Math.}, 181(1):191--242, 2015.

\bibitem{bs}
B.~J. Birch and N.~M. Stephens.
\newblock The parity of the rank of {M}ordell-{W}eil group.
\newblock {\em Topology}, 5:295--299, 1966.

\bibitem{magma}
W.~Bosma, J.~Cannon, and C.~Playoust.
\newblock The {M}agma algebra system. {I}. {T}he user language.
\newblock {\em J. Symbolic Comput.}, 24(3-4):235--265, 1997.
\newblock Computational algebra and number theory (London, 1993).

\bibitem{davies2021advancing}
A.~Davies, P.~Veli{\v{c}}kovi{\'c}, L.~Buesing, S.~Blackwell, D.~Zheng,
  N.~Toma{\v{s}}ev, R.~Tanburn, P.~Battaglia, C.~Blundell, Andr{\'a}s
  Juh{\'a}sz, et~al.
\newblock Advancing mathematics by guiding human intuition with ai.
\newblock {\em Nature}, 600(7887):70--74, 2021.

\bibitem{del}
C.~Delaunay.
\newblock Heuristics on {T}ate-{S}hafarevitch groups of elliptic curves defined
  over $\mathbb{Q}$.
\newblock {\em Experiment. Math.}, 10(2):191--196, 2001.

\bibitem{del1}
C.~Delaunay.
\newblock Heuristics on class groups and on {T}ate-{S}hafarevich groups: the
  magic of the {C}ohen-{L}enstra heuristics, ranks of elliptic curves and
  random matrix theory.
\newblock {\em London Math. Soc. Lecture Note Series}, 341:323--340, 2007.

\bibitem{douglee25}
M.~R Douglas and K.~Lee.
\newblock Mathematical data science.
\newblock {\em arXiv preprint arXiv:2502.08620}, 2025.

\bibitem{gold}
D.~Goldfeld.
\newblock Conjectures on elliptic curves over quadratic fields.
\newblock {\em Number theory, Carbondale 1979 (Proc. Southern Illinois Conf.),
  Lecture Notes in Math.}, 751:108--118, 1979.

\bibitem{ESL}
T.~Hastie, R.~Tibshirani, and J.~Friedman.
\newblock {\em The Elements of Statistical Learning: Data Mining, Inference,
  and Prediction}.
\newblock Springer, 2nd edition, 2009.

\bibitem{mumur01}
Y.~He, K.~Lee, T.~Oliver, and A.~Pozdnyakov.
\newblock Murmurations of elliptic curves.
\newblock {\em Experimental Mathematics}, 0(0):1--13, 2024.

\bibitem{hb2}
D.~R. Heath-Brown.
\newblock The average analytic rank of elliptic curves.
\newblock {\em Duke Math. J.}, 122(3):591--623, 2004.

\bibitem{hb}
D.R. Heath-Brown.
\newblock The size of {S}elmer groups for the congruent number problem.
\newblock {\em Invent. Math.}, 111:171--195, 1993.

\bibitem{hb1}
D.R. Heath-Brown.
\newblock The size of {S}elmer groups for the congruent number problem. {II}
  (with an appendix by {P}. {M}onsky).
\newblock {\em Invent. Math.}, 118(2):331--370, 1994.

\bibitem{ISL}
G.~James, D.~Witten, T.~Hastie, and R.~Tibshirani.
\newblock {\em An Introduction to Statistical Learning: with Applications in
  {R}}.
\newblock Springer, 2nd edition, 2021.

\bibitem{lmfdb}
The {LMFDB Collaboration}.
\newblock The {L}-functions and modular forms database.
\newblock \url{https://www.lmfdb.org}, 2025.
\newblock [Online; accessed 12 July 2025].

\bibitem{pr}
B.~Poonen and E.~Rains.
\newblock Random maximal isotropic subspaces and {S}elmer groups.
\newblock {\em J. Amer. Math. Soc.}, 25(1):245--269, 2012.

\bibitem{sil}
J.~H. Silverman.
\newblock {\em The arithmetic of elliptic curves}.
\newblock Springer Verlag, 1992.

\bibitem{smith2016congruent}
A.~Smith.
\newblock The congruent numbers have positive natural density.
\newblock {\em arXiv:1603.08479}, 2016.

\bibitem{smithgoldfeld}
A.~Smith.
\newblock $2^\infty$-{S}elmer groups, $2^\infty$-class groups and {G}oldfeld's
  conjecture.
\newblock {\em arXiv:1702.02325v2}, 2017.

\bibitem{sagemath}
{The Sage Developers}.
\newblock {S}agemath, the {S}age {M}athematics {S}oftware {S}ystem ({V}ersion
  10.6).
\newblock \url{https://www.sagemath.org}, 2025.

\bibitem{tyz}
Y.~Tian, X.~Yuan, and S.~Zhang.
\newblock Genus periods, genus points and congruent number problem.
\newblock {\em arXiv preprints arXiv: 1411.4728}, 2014.

\bibitem{tunnell}
J.~B. Tunnell.
\newblock A classical {D}iophantine problem and modular forms of weight $3/2$.
\newblock {\em Invent. Math.}, 72:323--334, 1983.

\bibitem{keplermapper}
H.~J. Van~Veen, N.~Saul, D.~Eargle, and S.~W Mangham.
\newblock Kepler mapper: A flexible python implementation of the mapper
  algorithm.
\newblock {\em Journal of Open Source Software}, 4(42):1315, 2019.

\end{thebibliography}
\bibliographystyle{plain} 

\end{document}